\newtheorem{thm}{Theorem}[section]
\newtheorem{prop}[thm]{Proposition}
\newtheorem{lem}[thm]{Lemma}
\newtheorem{defn}[thm]{Definition}
\newtheorem{cor}[thm]{Corollary}
\newtheorem{rem}[thm]{Remark}
\newtheorem{question}[thm]{Question}
\DeclareMathOperator{\Diff}{\operatorname{Diff}}
\DeclareMathOperator{\Ham}{\operatorname{Ham}}
\DeclareMathOperator{\Cal}{\operatorname{Cal}}
\DeclareMathOperator{\bR}{\mathbb{R}}
\DeclareMathOperator{\bZ}{\mathbb{Z}}
\DeclareMathOperator{\cD}{\mathcal{D}}
\DeclareMathOperator{\cH}{\mathcal{H}}
\DeclareMathOperator{\cO}{\mathcal{O}}
\DeclareMathOperator{\cP}{\mathcal{P}}
\begin{document}
\title[Generic equidistribution]{Generic equidistribution for area-preserving diffeomorphisms of compact surfaces with boundary}
\author{Abror Pirnapasov}
\author{Rohil Prasad}

\maketitle

\begin{abstract}
We prove that a generic area-preserving diffeomorphism of a compact surface with non-empty boundary has an equidistributed set of periodic orbits. This implies that such a diffeomorphism has a dense set of periodic points, although we also give a self-contained proof of this ``generic density'' theorem. One application of our results is the extension of mean action inequalities proved by Hutchings and Weiler for the disk and annulus to generic Hamiltonian diffeomorphisms of any compact surface with boundary. 
\end{abstract}

\section{Introduction}

\subsection{Statement of main results} Fix a smooth, compact, oriented surface $Z$ with smooth boundary $\partial Z$, and also fix a choice of smooth area form $\omega$ on $Z$. Write $\Diff(Z, \omega)$ for the space of diffeomorphisms of $Z$ which preserve the area form $\omega$. Any $\phi \in \Diff(Z, \omega)$ arising as the time-one flow of a time-dependent Hamiltonian $H \in C^\infty(\bR/\bZ \times Z)$, such that $H(t, -)$ is locally constant on $\partial Z$ for each $t$, is called a Hamiltonian diffeomorphism\footnote{We stress that these are not compactly supported, as much of the literature assumes, and that it is important for our purposes that they are not compactly supported.}, and the space of Hamiltonian diffeomorphisms forms a subgroup $\Ham(Z, \omega) \subset \Diff(Z, \omega)$. 

In the case where $Z$ is closed ($\partial Z = \emptyset$), it was recently shown that a Baire-generic element of $\Diff(Z, \omega)$ has a dense set of periodic points \cite{CGPZ21, edtmairHutchings}. This was also quantitatively refined in \cite{equidistribution}. The main goal of this paper is to prove these results in the case where the boundary $\partial Z$ is non-empty, which we will assume from now on. Beyond the intrinsic interest of the $C^\infty$-generic density/equidistribution theorem\footnote{See the Bourbaki seminar \cite{Humiliere21} by Humili\`{e}re for a historical account of work on the generic density theorem, which goes back to the work of Pugh in the $60$s.}, our results have further applications in low-dimensional conservative smooth dynamics. We provide such an application in Section~\ref{subsec:mean_action} below. Also, since the first version of this paper appeared, our results were used by Enciso--Peralta-Salas \cite{EPS23} to obstruct the topological realization of divergence-free vector fields on $\bR^3$ as magnetohydrostatic equilibria. 

\subsubsection{Generic density}\label{subsec:intro_density}

We begin by stating our ``generic density'' results. These are special cases of the quantitative ``generic equidistribution'' results below, but they are simpler to state and prove, so we include them as separate results. 

\begin{thm} \label{thm:boundary_density}
Let $Z$ be a smooth, compact, oriented surface with smooth, non-empty boundary $\partial Z$, and let $\omega$ be a smooth area form on $Z$. Then a $C^\infty$-generic element of $\Diff(Z, \omega)$ has a dense set of periodic points.
\end{thm}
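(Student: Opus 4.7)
The plan is to adapt the Baire category strategy of \cite{CGPZ21, edtmairHutchings} from the closed setting to the boundary setting, using the twisted periodic Floer homology $\TWPFH$ and its spectral invariants $\TWPFC_\sigma$ developed later in this paper. Fix a countable basis $\{B_k\}_{k \geq 1}$ for the topology of $Z$ and set
$$ \mathcal{R}_k := \{\phi \in \Diff(Z,\omega) : \phi \text{ has a periodic point in } B_k\}. $$
It suffices to show each $\mathcal{R}_k$ is both $C^\infty$-open and $C^\infty$-dense, since $\bigcap_k \mathcal{R}_k$ is then a dense $G_\delta$ in $\Diff(Z, \omega)$ consisting of diffeomorphisms with a dense set of periodic points.

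Openness of $\mathcal{R}_k$ is the easier half. Given $\phi \in \mathcal{R}_k$, I would first apply a small $C^\infty$-perturbation, using Robinson's Kupka--Smale theorem for area-preserving surface diffeomorphisms, so that the periodic point lying in $B_k$ becomes nondegenerate. A nondegenerate periodic point persists under $C^\infty$-small perturbations by the implicit function theorem, and remains in the open set $B_k$ provided the perturbation is small enough.

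For density, fix $\phi \in \Diff(Z,\omega)$ and $B = B_k$. After an area-preserving change of coordinates (and, if needed, a small initial perturbation to avoid trivialities) we may assume $B$ is a standard disk in the interior of $Z$. Pick a nonnegative $H \in C^\infty(Z)$ supported in $B$ with nonzero integral, and form the $C^\infty$-continuous family $\phi_s := \phi \circ \psi_s^H$, where $\psi_s^H$ denotes the time-$s$ Hamiltonian flow of $H$. Along this family the twisted PFH spectral invariants $\TWPFC_{\sigma_d}(\phi_s)$ associated to a sequence of classes $\sigma_d$ of increasing degree are Lipschitz in $s$, their consecutive differences $\TWPFC_{\sigma_d}(\phi_s) - \TWPFC_{\sigma_{d-1}}(\phi_s)$ are realized as actions of genuine periodic orbits of $\phi_s$, and by a Weyl-type law their average over $d$ is controlled by a global Calabi-like quantity $\Cal$ that is monotonically shifted as $s$ increases. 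A pigeonhole argument over large $d$ and small $s$ then produces, for $\phi_s$ arbitrarily $C^\infty$-close to $\phi$, a periodic orbit whose action varies with $s$, forcing that orbit to enter the support of $H$ and hence to intersect $B$.

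The main obstacle will be in step three: establishing the Weyl law and the orbit-action interpretation of successive spectral-invariant differences in the twisted setting, and guaranteeing that the orbits so produced actually lie in the \emph{interior} disk $B$ rather than concentrating near $\partial Z$. Because $\phi$ is not assumed Hamiltonian and the mapping torus of $\phi$ has nontrivial boundary tori, the reference cycle $\refcycle$ used to define $\TWPFC$ and $\TWHM$ must be chosen with care; this is the Floer-theoretic setup that occupies the bulk of the paper. Once those ingredients are in place, the Baire category framework slots in smoothly and Theorem~\ref{thm:boundary_density} follows.
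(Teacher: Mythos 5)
Your Baire category skeleton (countable basis $\{B_k\}$, sets $\mathcal{R}_k$ open-and-dense, $\bigcap_k \mathcal{R}_k$ residual) and the openness argument via nondegenerate periodic orbits are correct and match the ``standard Baire category argument'' the paper alludes to. But the density half of your argument diverges from the paper in a way that creates a real gap.

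You propose to run PFH spectral-invariant machinery \emph{directly on $Z$}, via a ``twisted'' Floer theory $\TWPFH$/$\TWPFC_\sigma$ for the mapping torus of $\phi$, which you note ``has nontrivial boundary tori,'' and you assert that setting this up ``occupies the bulk of the paper.'' That is not what the paper does, and the omission is substantial: periodic Floer homology with well-behaved spectral invariants, a Weyl law, and $U$-supported nonvanishing results is established in the literature only for \emph{closed} mapping tori; building the analogous package for mapping tori with boundary (choice of reference cycle, compactness of moduli spaces near the boundary tori, an asymptotic Weyl law) is a separate project that the paper never attempts. Instead, the paper's key idea is to sidestep it: cap off each boundary circle of $Z$ with a disk to obtain a closed surface $\Sigma$ in which $L = \partial Z$ sits as an inessential Lagrangian (\S\ref{subsec:capping_off}); show by a Moser-type extension argument that every $\phi_0 \in \Diff(Z,\omega)$ extends to some $\phi \in \Diff(\Sigma,\Omega;L)$ (Proposition~\ref{prop:extension}); invoke the \emph{existing} closed-surface PFH closing lemma of \cite{CGPZ21}, checking that the Hamiltonian perturbation can be taken supported in $U$ and locally constant on $L$ so that the perturbed map still preserves $L$ (Proposition~\ref{prop:closing_lemma}); and then restrict back to $Z$ (Proposition~\ref{prop:boundary_closing}). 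All the genuinely new work is in the extension step and in the $L$-compatible transversality/perturbation lemmas, not in Floer theory.

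A second gap: your density argument applies the spectral-invariant machinery to an arbitrary $\phi \in \Diff(Z,\omega)$, but the PFH Weyl law and the non-triviality of the spectral invariants require a rationality (or monotonicity) hypothesis on the map. The paper handles this by first proving that rational maps preserving an inessential Lagrangian are $C^\infty$-dense in $\Diff(\Sigma,\Omega;L)$ (Proposition~\ref{prop:rational_maps_dense}), deducing Proposition~\ref{prop:closing_lemma2} for arbitrary $\phi$, and then carrying rationality back and forth under the capping construction via Lemma~\ref{lem:rationality}. Your proposal does not address this at all, so even granting the Floer theory you posit, the density step would only apply to a restricted class of maps.
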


\begin{rem}
Theorem~\ref{thm:boundary_density} and the subsequent Theorems~\ref{thm:boundary_monotone_density}, \ref{thm:boundary_equidistribution}, and \ref{thm:boundary_monotone_equidistribution} do not assume that diffeomorphisms are compactly supported in the interior of $Z$; perturbations are required all the way up to the boundary. The space $\Diff_c(Z, \omega)$ of compactly supported area-preserving diffeomorphisms, unlike $\Diff(Z, \omega)$, is not a Baire space, so the designation of ``Baire-generic'' may be vacuous in this setting.
\end{rem}

In the closed case ($\partial Z = \emptyset$), it was observed that the generic density property holds within rational Hamiltonian isotopy classes in $\Diff(Z, \omega)$. ``Rationality'' in this setting is a certain homological property of area-preserving maps. We define it in \S\ref{subsec:mapping_torii}. The rationality condition holds for a dense subset of of $\Diff(Z, \omega)$. Any Hamiltonian diffeomorphism is rational, and more generally the property of rationality is preserved under Hamiltonian isotopy. We define a rational Hamiltonian isotopy class to be a Hamiltonian isotopy class in which every representative in $\Diff(Z, \omega)$ is rational. The next theorem extends the aforementioned observation to the case of surfaces with boundary. 

\begin{thm} \label{thm:boundary_monotone_density}
Let $Z$ be a smooth, compact, oriented surface with smooth, non-empty boundary $\partial Z$, and let $\omega$ be a smooth area form on $Z$. Then a $C^\infty$-generic element of a rational Hamiltonian isotopy class in $\Diff(Z, \omega)$ has a dense set of periodic points. 
\end{thm}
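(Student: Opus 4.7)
The plan is a Baire category argument combined with a perturbation argument that uses the twisted PFH spectral invariants $\TWPFC$ constructed later in the paper. Fix a rational Hamiltonian isotopy class $\cH \subset \Diff(Z,\omega)$. For each non-empty open $U \subset Z$ and $N \in \bZ_{>0}$, let $\cD(U,N) \subset \cH$ denote the set of $\phi$ admitting a periodic point of period at most $N$ whose orbit meets $U$. A standard transversality argument, using that non-degenerate periodic points persist under $C^1$-small perturbations and that every $\phi$ can be $C^\infty$-approximated by one whose periodic points of period $\le N$ are all non-degenerate, shows that $\cD(U,N)$ is open in $\cH$. Fixing a countable basis $\{U_k\}$ of the topology of $Z$, the intersection $\bigcap_k \bigcup_N \cD(U_k,N)$ is then a dense $G_\delta$ whose elements have dense periodic point sets, so it suffices to prove that for each fixed $U$ the union $\bigcup_N \cD(U,N)$ is dense in $\cH$.

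This density step is the heart of the matter. Fix $\phi_0 \in \cH$, a non-empty open $U \subset Z$, and a $C^\infty$-open neighborhood $\mathcal{V}$ of $\phi_0$ in $\cH$; the goal is to produce an element of $\mathcal{V}$ with a periodic orbit meeting $U$. I would proceed by contradiction, assuming no such element exists, and use the spectral invariants $c_d(\phi) = \TWPFC_d(\phi)$ defined on $\cH$ for each degree $d$ in an appropriate positive semigroup. The three properties of $c_d$ needed are: \emph{spectrality}, namely $c_d(\phi) - c_d(\refcycle)$ is a sum of actions of periodic orbits of $\phi$; \emph{continuity} in the $C^\infty$ topology on $\cH$; and a \emph{Weyl law} $d^{-1} c_d(\phi) \to \Cal(\phi) + \mathrm{const}$ as $d \to \infty$, where the constant depends only on the chosen reference data.

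Choose $H \in C^\infty(Z)$ with support in $U$ and $\int_Z H\,\omega > 0$, and set $\phi_s := \psi_H^s \circ \phi_0$ for the time-$s$ Hamiltonian flow $\psi_H^s$ of $H$. For all $s > 0$ sufficiently small, $\phi_s \in \mathcal{V}$, so by hypothesis $\phi_s$ has no periodic orbit meeting $U$. Since $\phi_s = \phi_0$ on $Z \setminus U$, the periodic orbits of $\phi_s$ coincide as orbits in $Z$ with those of $\phi_0$ and have unchanged actions. By spectrality, $c_d(\phi_s) - c_d(\refcycle)$ then takes values in the countable set of action sums associated to $\phi_0$; by continuity of $s \mapsto c_d(\phi_s)$, it is locally constant; hence $c_d(\phi_s) = c_d(\phi_0)$ for every $d$ and all small $s$. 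But $\Cal(\phi_s) - \Cal(\phi_0) = s\int_Z H\,\omega > 0$, so the Weyl law gives $\lim_{d \to \infty} d^{-1}(c_d(\phi_s) - c_d(\phi_0)) = s\int_Z H\,\omega > 0$, contradicting $c_d(\phi_s) = c_d(\phi_0)$.

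The main obstacle is establishing the three properties above for $\TWPFC$ on surfaces with boundary: the boundary significantly complicates the compactness and transversality analysis for the PFH pseudoholomorphic curves, and the rationality hypothesis on $\cH$ is precisely what is required to give $\TWPFH$ a well-defined grading and thus pin down the invariants $c_d$ unambiguously. Once the $\TWPFC$ framework is in place with these three properties, the Baire category / perturbation argument sketched above adapts essentially verbatim from the closed-surface treatments, yielding Theorem~\ref{thm:boundary_monotone_density}.
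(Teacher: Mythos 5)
Your Baire category reduction is essentially standard and in the right spirit, although as written the sets $\cD(U,N)$ are not obviously open: having a periodic point of period $\le N$ through $U$ is not a stable condition because the point might be degenerate and vanish under perturbation. The usual fix, and the one the paper implicitly uses, is to insist that the periodic point be nondegenerate, in which case openness is immediate from persistence, and then density is established by first producing a periodic point through $U$ and then perturbing once more to make it nondegenerate. This is a small repair.

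The serious problem is the core of the density step. You propose to run the closing-lemma argument using spectral invariants $\TWPFC_d$ defined directly on the surface-with-boundary $Z$, and you yourself flag that ``the main obstacle is establishing the three properties above for $\TWPFC$ on surfaces with boundary.'' That obstacle is not a detail to be filled in: there is no existing construction of PFH, twisted or otherwise, for area-preserving maps of surfaces with boundary, and building one would require developing the compactness and transversality analysis for pseudoholomorphic curves in a symplectization with boundary from scratch. The paper's strategy is designed precisely to circumvent this. It caps off $Z$ by attaching disks to the boundary components, producing a closed surface $\Sigma$ with an inessential Lagrangian $L = \partial Z$, and proves an extension result (Proposition~\ref{prop:extension}) showing every $\phi_0 \in \Diff(Z,\omega)$ extends to some $\phi \in \Diff(\Sigma,\Omega;L)$, together with Lemma~\ref{lem:rationality} showing the extension preserves rationality when the total areas are chosen commensurable. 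One then applies the closed-surface PFH closing lemma from \cite{CGPZ21} to $\phi$, taking care that the Hamiltonian perturbation is compactly supported in $U \subset Z \setminus \partial Z$ and vanishes on $L$ (Proposition~\ref{prop:closing_lemma}), and restricts back to $Z$ to obtain the periodic point for $\phi_0$. The Lagrangian $L$ prevents orbits from crossing between $Z$ and the capping disks, which is what makes the restriction legitimate. So your proof outline, as stated, has a genuine gap at the PFH-with-boundary step, and the extension/capping-off argument that fills it is the main new technical content of the paper's proof, not an incidental alternative.
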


The subgroup $\Ham(Z, \omega)$ is a rational Hamiltonian isotopy class. Therefore, Theorem~\ref{thm:boundary_monotone_density} applies to $\Ham(Z, \omega)$. We conclude that the generic density result proved by Asaoka--Irie \cite{asaokairie} for Hamiltonian diffeomorphisms of closed surfaces also holds in the case of surfaces with boundary. 

\begin{cor}\label{cor:hamiltonian_density}
Let $Z$ be a smooth, compact, oriented surface with smooth, non-empty boundary $\partial Z$, and let $\omega$ be a smooth area form on $Z$. Then a $C^\infty$-generic Hamiltonian diffeomorphism of $(Z, \omega)$ has a dense set of periodic points. 
\end{cor}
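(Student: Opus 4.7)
The plan is to deduce Corollary~\ref{cor:hamiltonian_density} as an immediate consequence of Theorem~\ref{thm:boundary_monotone_density}, by verifying that $\Ham(Z,\omega)$ is itself a rational Hamiltonian isotopy class in $\Diff(Z,\omega)$.

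First, I would check that $\Ham(Z,\omega)$ coincides with a single Hamiltonian isotopy class. By construction it consists of precisely those area-preserving diffeomorphisms that are Hamiltonian isotopic to the identity: if $\phi_0, \phi_1 \in \Ham(Z,\omega)$ then $\phi_1 \circ \phi_0^{-1}$ is again Hamiltonian, whose defining isotopy composed with $\phi_0$ produces a Hamiltonian isotopy from $\phi_0$ to $\phi_1$; conversely, if $\phi \in \Diff(Z,\omega)$ is Hamiltonian isotopic to some $\phi_0 \in \Ham(Z,\omega)$, then $\phi = (\phi \circ \phi_0^{-1}) \circ \phi_0$ is a composition of Hamiltonian diffeomorphisms, hence lies in $\Ham(Z,\omega)$.

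Next, I would invoke the fact noted in the paragraph preceding Theorem~\ref{thm:boundary_monotone_density} (and to be justified in \S\ref{subsec:mapping_torii}) that every Hamiltonian diffeomorphism of $(Z,\omega)$ is rational. Combined with the previous paragraph, this shows that every representative of the Hamiltonian isotopy class $\Ham(Z,\omega)$ is rational, so $\Ham(Z,\omega)$ qualifies as a rational Hamiltonian isotopy class in the sense defined before Theorem~\ref{thm:boundary_monotone_density}. Applying that theorem to the class $\Ham(Z,\omega)$ then yields the corollary.

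There is no real obstacle at this level: the substantive content has already been absorbed into Theorem~\ref{thm:boundary_monotone_density}, and the deduction rests only on the essentially formal identification of $\Ham(Z,\omega)$ with the Hamiltonian isotopy class of the identity, together with the rationality of Hamiltonian diffeomorphisms, both of which are accounted for elsewhere in the paper.
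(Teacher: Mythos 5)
Your proposal is correct and matches the paper's own (one-line) justification: the paper simply observes that $\Ham(Z,\omega)$ is a rational Hamiltonian isotopy class and then invokes Theorem~\ref{thm:boundary_monotone_density}. You have usefully spelled out the two formal facts the paper leaves implicit, namely that $\Ham(Z,\omega)$ is precisely the Hamiltonian isotopy class of the identity and that every Hamiltonian diffeomorphism is rational, but the route is the same.
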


\subsubsection{Generic equidistribution}

Theorem~\ref{thm:boundary_equidistribution} and \ref{thm:boundary_monotone_equidistribution} below are quantitative refinements of Theorems \ref{thm:boundary_density} and \ref{thm:boundary_monotone_density}. They assert that a generic area-preserving diffeomorphism of $Z$ (resp. generic element of a rational Hamiltonian isotopy class) admits a set of periodic orbits which equidistribute in some manner with respect to the area form $\omega$. Before providing statements, we establish some definitions and notation, including a precise definition of the notion of equidistribution of periodic orbits used in this paper. 

Fix any $\phi \in \Diff(Z, \omega)$. A \textbf{periodic orbit} of $\phi$ is a finite ordered multiset $S = \{x_1, \ldots, x_d\}$ of points in $\Sigma$ such that $\phi(x_i) = x_{i+1}$ for $i = 1, \ldots, d - 1$ and $\phi(x_d) = x_1$. The cardinality $|S| := d$ of $S$ is called the \textbf{period} of $S$. A periodic orbit $S = \{x_1, \ldots, x_d\}$ is \textbf{simple} if all of the $x_i$ are pairwise distinct. Use $\cP(\phi)$ to denote the set of all simple periodic orbits of $\phi$. 

A \textbf{orbit set}, the collection of which we denote by $\cP_{\bR}(\phi)$, is a formal finite linear combination of elements of $\cP(\phi)$ (simple periodic orbits) with positive real coefficients. An \textbf{integral orbit set}, the collection of which we denote by $\cP_{\bZ}(\phi)$, is an element of $\cP_{\bR}(\phi)$ where all the coefficients are positive integers. For any orbit set
$$\cO = \sum_{k=1}^N a_k \cdot S_k \in \cP_{\bR}(\phi)\quad\text{define}\quad|\cO| := \sum_{k=1}^N a_k \cdot |S_k| \in \bR.$$

Any periodic orbit $S = \{x_1, \ldots, x_d\}$ defines a continuous real-valued functional on the space $C^0(Z)$ of continuous functions by summing up the function on the points in $S$:
$$S(f) := \sum_{i=1}^d f(x_i).$$

Any orbit set $\cO$ defines a functional on $C^0(Z)$ via linear extension of the above formula: 
$$\cO(f) := \sum_{k=1}^N a_k \cdot S_k(f).$$

An \textbf{equidistributed sequence of orbit sets} is a sequence $(\cO_i)_{i \in \mathbb{N}}$ of orbit sets such that for any $f \in C^0(Z)$, the averages of $f$ on the orbit sets $\cO_i$ limit to the average of $f$ on $Z$:
$$\lim_{i \to \infty} \frac{\cO_i(f)}{|\cO_i|} = \big(\int_Z \omega \big)^{-1} \cdot \int_Z f\,\omega.$$

The most concise formulation of this statement is that $\{\cO_i/|\cO_i|\}_{i \in \mathbb{N}}$, considered as a sequence of Borel probability measures, converges weakly to $\omega$. We are now ready to state our ``generic equidistribution'' theorems. 

\begin{thm} \label{thm:boundary_equidistribution}
Let $Z$ be a smooth, compact, oriented surface with smooth, non-empty boundary $\partial Z$, and let $\omega$ be a smooth area form on $Z$. A $C^\infty$-generic element of $\Diff(Z, \omega)$ has an equidistributed sequence of orbit sets.
\end{thm}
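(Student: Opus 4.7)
The plan is to adapt the spectral-invariant strategy of Cristofaro-Gardiner--Prasad--Zhang and Edtmair--Hutchings from the closed case to the boundary setting, using the twisted periodic Floer homology \TWPFH\ machinery for mapping tori (whose chain-level counterpart \TWPFC\ is indicated in the paper's macro list). Fix a countable dense sequence $\{f_j\}_{j \geq 1} \subset C^0(Z)$. For integers $d, N \geq 1$, let $W_{d,N} \subset \Diff(Z, \omega)$ be the set of $\phi$ admitting some $\cO \in \cP_{\bR}(\phi)$ with $|\cO| \geq d$ and
$$\Big| \frac{\cO(f_j)}{|\cO|} - \Big(\int_Z \omega\Big)^{-1}\int_Z f_j\,\omega \Big| < \frac{1}{N}, \qquad j = 1, \ldots, N.$$
Then $\phi$ has an equidistributed sequence of orbit sets if and only if $\phi \in \bigcap_{N} \bigcap_{D} \bigcup_{d \geq D} W_{d,N}$, exhibiting the desired property as a $G_\delta$. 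Openness of $W_{d,N}$ follows by first $C^\infty$-perturbing slightly so that the witnessing orbits in $\cO$ are nondegenerate, then invoking the implicit function theorem to persist them under further small perturbations, and using continuity of $\cO(f_j)/|\cO|$ in the orbit locations.

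The substantive content is density of each $\bigcup_{d \geq D} W_{d,N}$. Since rational diffeomorphisms (\S\ref{subsec:mapping_torii}) are dense in $\Diff(Z, \omega)$ and $W_{d,N}$ is open, it suffices to show that every rational $\phi$ itself admits an equidistributed sequence. Fix such a $\phi$. Build \TWPFH\ spectral invariants $c_\sigma(\phi, H)$ for Hamiltonians $H$ on the mapping torus, satisfying three structural properties: \textbf{(a)} Hofer-type Lipschitz continuity, $|c_\sigma(\phi, H_1) - c_\sigma(\phi, H_2)| \leq d \, \|H_1 - H_2\|_{C^0}$ for classes $\sigma$ of PFH degree $d$; \textbf{(b)} spectrality, so that $c_\sigma(\phi, H)$ is realized as the action of some orbit set of the $H$-perturbed map; and \textbf{(c)} a Weyl law, $c_{\sigma_d}(\phi, H)/d \to (\int_Z \omega)^{-1}\int_{\bR/\bZ \times Z} H\,ds \wedge \omega$ as $d \to \infty$. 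Applying Hutchings' differentiation principle to the one-parameter family $s \mapsto c_{\sigma_d}(\phi, sH)$ at points of differentiability, the spectrality property converts difference quotients into $H$-averages on actual orbit sets, while the Weyl law forces those averages to approach the spacetime average of $H$. Running this simultaneously against finitely many test functions $f_1, \ldots, f_N$ (or directly extracting a weak-$*$ convergent subsequence of $\cO_d / |\cO_d|$) yields the required equidistributed sequence.

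The main obstacle is establishing the Weyl law (c) for \TWPFH\ when the mapping torus of $\phi$ has torus boundary components. The usual holomorphic-curve analysis underlying PFH is set up for closed $3$-manifolds, so the twisted version must be defined with appropriate boundary or asymptotic behavior, presumably via a capping or twisting construction compatible with the action filtration, in which the twist is what prevents spurious generators coming from the added caps. The asymptotic volume computation must then be carried out in this modified setting, likely by comparison with embedded contact homology spectral invariants of a closed filling; verifying spectrality and the Weyl law in this boundary-adapted framework, rather than the density/Baire argument itself, is where the bulk of the technical work will lie.
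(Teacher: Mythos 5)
Your approach diverges fundamentally from the paper's, and the divergence is precisely at the point you flag as open. You propose to build twisted PFH spectral invariants directly for the mapping torus of $\phi:Z\to Z$, which has torus boundary, and to establish spectrality and a Weyl law in that setting. You then concede that ``the usual holomorphic-curve analysis underlying PFH is set up for closed $3$-manifolds'' and that carrying it over ``is where the bulk of the technical work will lie.'' That is a genuine gap, not a deferred detail: nothing in the literature supplies PFH with a Weyl law on mapping tori with boundary, and the paper does not attempt to construct it.

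The paper sidesteps this entirely by a capping trick, which is the missing key idea. One glues disks to $\partial Z$ to obtain a closed surface $\Sigma$ with extended form $\Omega$, so that $L := \partial Z$ becomes an inessential Lagrangian (a union of disjoint embedded loops) in $\Sigma$. Proposition~\ref{prop:extension} shows any $\phi_0 \in \Diff(Z,\omega)$ extends to some $\phi\in\Diff(\Sigma,\Omega;L)$, and Lemma~\ref{lem:rationality} shows rationality is preserved if the areas are rationally dependent. Now all the PFH input (Weyl law, spectrality) is the \emph{standard closed-surface} version, already proved in the references; the new technical work is instead a transversality statement (Lemma~\ref{lem:generic_nondegeneracy}) showing one can achieve nondegeneracy using only Hamiltonian perturbations that are locally constant on $L$, so as to stay inside $\Diff(\Sigma,\Omega;L)$. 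This feeds into Proposition~\ref{prop:lag_nearly_equidistributed}, a near-equidistribution result on $\Sigma$ for maps preserving $L$. Since the perturbed map still preserves $L$, the loops $L$ separate the dynamics in $Z$ from those in $\Sigma\setminus Z$; the final estimates (Steps $2$--$4$ of the proof of Theorem~\ref{thm:boundary_monotone_equidistribution}) use carefully chosen cutoff functions locally constant on $L$ to show that the sub-orbit-set $\cO^Z$, obtained by discarding orbits of $\cO$ not lying in $Z$, is still nearly equidistributed in $Z$.

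Two smaller issues in your write-up. First, your sets $W_{d,N}$ are not obviously open as defined: you cannot argue openness by ``first perturbing to make the witnessing orbit nondegenerate,'' since perturbing might leave $W_{d,N}$. The paper instead builds nondegeneracy of the witnessing orbit set into the definition of the open set (the sets $\cH_N$, $\cD_N$), and supplies Lemma~\ref{lem:make_nondegenerate} to arrange it. Second, the reduction ``it suffices to show every rational $\phi$ itself admits an equidistributed sequence'' is too strong and not established in the literature; the correct target is that a $C^\infty$-dense set of perturbations of each rational $\phi$ lies in each $\cH_N$, which is what the Baire argument actually needs.
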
 

The next theorem is a ``generic equidistribution'' result for rational Hamiltonian isotopy classes in $\Diff(Z, \omega)$, a particular case of which are the Hamiltonian diffeomorphisms. See the discussion below Theorem~\ref{thm:boundary_density} above and \S\ref{subsec:mapping_torii} for more details. 

\begin{thm} \label{thm:boundary_monotone_equidistribution}
Let $Z$ be a smooth, compact, oriented surface with smooth, non-empty boundary $\partial Z$, and let $\omega$ be a smooth area form on $Z$. A $C^\infty$-generic element of a rational Hamiltonian isotopy class in $\Diff(Z, \omega)$ has an equidistributed sequence of orbit sets. 
\end{thm}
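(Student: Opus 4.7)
The plan is to reduce Theorem~\ref{thm:boundary_monotone_equidistribution} to a Weyl-type asymptotic law for spectral invariants of the twisted periodic Floer homology $\TWPFH$ attached to the mapping torus of a rational $\phi \in \Diff(Z,\omega)$, following the closed-surface blueprint of \cite{equidistribution} but with the PFH machinery adapted to the boundary setting. First I would carry out a standard Baire reduction: let $\mathcal{R}$ denote the chosen rational Hamiltonian isotopy class, fix a countable $C^0$-dense family $(f_k)_{k \in \mathbb{N}}$ in $C^0(Z)$, and for each $k, N, n$ set $\mathcal{U}_{k,N,n} \subseteq \mathcal{R}$ to be the subset of diffeomorphisms admitting an orbit set $\cO \in \cP_{\bR}(\phi)$ with $|\cO| \geq N$ and
\[
\Bigl| \frac{\cO(f_k)}{|\cO|} \;-\; \Bigl(\int_Z \omega\Bigr)^{-1}\!\int_Z f_k\,\omega \Bigr| \;<\; \frac{1}{n}.
\]
Smooth persistence of non-degenerate periodic orbits under $C^\infty$-small perturbations makes each $\mathcal{U}_{k,N,n}$ open in $\mathcal{R}$, so any $\phi$ in the residual intersection $\bigcap_{k,n} \bigcup_N \mathcal{U}_{k,N,n}$ admits an equidistributed sequence of orbit sets. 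The theorem thus reduces to proving that each $\mathcal{U}_{k,N,n}$ is $C^\infty$-dense in $\mathcal{R}$.

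Second, building on the twisted objects signaled by $\TWHM$, $\TWPFC$, and $\TWPFH$, I would assemble a twisted periodic Floer homology for the mapping torus of any rational $\phi \in \mathcal{R}$, together with PFH spectral invariants $c_{\sigma_d}(\phi, H)$ parametrized by smooth Hamiltonians $H$ on $Z$ and by classes $\sigma_d \in \TWPFH_*$ of degree $d$. The twist together with the rationality assumption are what make these numbers well-defined and force each $c_{\sigma_d}(\phi, H)$ to be realized as the PFC action value of an honest integral orbit set $\cO_d(\phi, H)$ of the composition $\phi \circ \phi_H$. The analytic heart of the argument is a Weyl law of the form
\[
c_{\sigma_d}(\phi, H) \;=\; \ell(H) \cdot d \;+\; o(d) \qquad \text{as } d \to \infty,
\]
where $\ell(H)$ is a linear functional proportional to $\int_0^1 \!\int_Z H(t,\cdot)\,\omega\,dt$, modified if necessary by a boundary-rotation term encoding the twist. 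Given any target $(k, N, n)$ and any $\phi \in \mathcal{R}$, I would apply the Weyl law along a one-parameter family $\{sH\}$ with $H$ chosen to distinguish $f_k$ from constants; comparing the derivative of $c_{\sigma_d}(\phi, sH)/d$ in $s$ with the action of the realizing orbit sets $\cO_d$ forces some subsequence of the probability measures $\cO_d/|\cO_d|$ to converge weakly to $\omega$ against $f_k$. The perturbed maps $\phi \circ \phi_{sH}$ then lie in $\mathcal{U}_{k,N,n}$ and are arbitrarily $C^\infty$-close to $\phi$ while remaining in $\mathcal{R}$.

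The main obstacle is constructing $\TWPFH$ and proving the Weyl law over a mapping torus whose base surface has non-empty boundary. In the closed case \cite{equidistribution} the Weyl law rests on the Lee--Taubes isomorphism $\PFH \cong$ monopole Floer homology and on Taubes' compactness for perturbed Seiberg--Witten equations, neither of which transfers directly to a three-manifold with boundary. I would attempt either a doubling strategy, producing a closed mapping torus by reflecting $\phi$ across $\partial Z$ while tracking how the original $\TWPFH$ sits inside the PFH of the double, or a fully intrinsic construction in which $\partial Z \times S^1$ is realized as a stable Hamiltonian hypersurface and holomorphic curves are controlled up to the boundary via SFT-style neck stretching. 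The rationality hypothesis is essential throughout: it is precisely what keeps the degree-$d$ Floer complexes finite and makes the asymptotic spectral invariants computable.
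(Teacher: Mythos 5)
Your high-level strategy — Baire reduction to open-dense sets $\mathcal{U}_{k,N,n}$, then density via a PFH Weyl law differentiated along a Hamiltonian family — is the correct template, and it is the template the paper uses. But there is a genuine gap: your plan runs the PFH machinery on the mapping torus of $\phi \in \Diff(Z,\omega)$ itself, a three-manifold with boundary, and you correctly identify that neither the Lee--Taubes isomorphism nor Taubes' Seiberg--Witten compactness is available there. The two fixes you sketch (doubling across $\partial Z$, or an intrinsic SFT-style construction at a stable Hamiltonian boundary) are speculative and substantially harder than anything in the paper; as written, the proposal does not resolve the very obstacle it flags. In particular, doubling makes $\partial Z$ an \emph{essential} Lagrangian in the doubled surface in general, which conflicts with the paper's rational-approximation step (cf.\ Proposition~\ref{prop:rational_maps_dense}); and a Floer theory for mapping tori with boundary does not currently exist at the level of rigor required.

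The paper avoids the boundary problem entirely by capping off: it glues disks of equal area to $\partial Z$ to produce a \emph{closed} surface $\Sigma$ with area form $\Omega$, making $L := \partial Z$ an \emph{inessential} Lagrangian in $\Sigma$, and then extends any $\phi_0 \in \Diff(Z,\omega)$ to some $\phi \in \Diff(\Sigma,\Omega;L)$ (Proposition~\ref{prop:extension}). This places the entire spectral-invariant argument back in the closed setting where it is already established (\cite{equidistribution}). Two technical points you miss then become the real content. First, the perturbing Hamiltonians must be locally constant on $L$ so that the perturbed map still preserves $L$; genericity of nondegeneracy in that constrained perturbation class is not automatic and requires the transversality work of Lemma~\ref{lem:generic_nondegeneracy}, split into orbits inside $L$ and orbits outside $L$. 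Second, near-equidistribution on $\Sigma$ does not immediately give near-equidistribution on $Z$: the orbit set $\cO$ produced by Proposition~\ref{prop:lag_nearly_equidistributed} must be truncated to $\cO^Z$ (its orbits lying in $Z$), and one needs cutoff functions $\chi_j, \eta_j$ and the estimates \eqref{eq:equidistribution4}--\eqref{eq:equidistribution6} to control both the numerator $\cO^Z(f_i)$ and the ratio $|\cO^Z|/|\cO|$. Neither step appears in your proposal, and both are where the actual work lies once the closing of the boundary is in place.
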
 

Theorem~\ref{thm:boundary_monotone_equidistribution} implies the following quantitative refinement of Corollary~\ref{cor:hamiltonian_density}. 

\begin{cor}\label{cor:hamiltonian_equidistribution}
Let $Z$ be a smooth, compact, oriented surface with smooth, non-empty boundary $\partial Z$, and let $\omega$ be a smooth area form on $Z$. A $C^\infty$-generic Hamiltonian diffeomorphism of $(Z, \omega)$ has an equidistributed sequence of orbit sets. 
\end{cor}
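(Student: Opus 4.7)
The plan is to obtain Corollary~\ref{cor:hamiltonian_equidistribution} as an immediate consequence of Theorem~\ref{thm:boundary_monotone_equidistribution}. The only thing I need to check is that the subgroup $\Ham(Z,\omega)$ is itself a rational Hamiltonian isotopy class inside $\Diff(Z,\omega)$, and that the $C^\infty$-topology on $\Ham(Z,\omega)$ is the one inherited as a subspace of $\Diff(Z,\omega)$. Granting these two points, the corollary follows by specializing Theorem~\ref{thm:boundary_monotone_equidistribution} to this particular rational Hamiltonian isotopy class.

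To justify the first point, I would invoke the discussion preceding Theorem~\ref{thm:boundary_monotone_density}, which the paper promises to make precise in \S\ref{subsec:mapping_torii}. Concretely, I expect the argument to run as follows. Any $\phi \in \Ham(Z,\omega)$ is, by definition, the time-one map of a Hamiltonian isotopy starting from the identity; the identity is visibly rational (its mapping torus has the expected rational cohomological data), and rationality is preserved along a Hamiltonian isotopy because the relevant homological invariant is constant on Hamiltonian isotopy classes. Hence every element of $\Ham(Z,\omega)$ is rational. Conversely, any two Hamiltonian diffeomorphisms lie in the same Hamiltonian isotopy class, since $\Ham(Z,\omega)$ is a group generated by Hamiltonian isotopies from the identity. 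Thus $\Ham(Z,\omega)$ is exactly one Hamiltonian isotopy class, and it is rational since all of its representatives are.

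For the second point, there is nothing to check: $\Ham(Z,\omega)$ is given the $C^\infty$-topology as a subset of $\Diff(Z,\omega)$, so a Baire-generic subset of $\Ham(Z,\omega)$ in the sense of Theorem~\ref{thm:boundary_monotone_equidistribution} is precisely a Baire-generic set of Hamiltonian diffeomorphisms in the sense of the statement of the corollary. (One should also note that $\Ham(Z,\omega)$, being closed in $\Diff(Z,\omega)$ as the rational Hamiltonian isotopy class of the identity, is itself a Baire space, so genericity there is not a vacuous notion; this is in contrast with the subgroup of compactly supported Hamiltonian diffeomorphisms mentioned in the remark after Theorem~\ref{thm:boundary_density}.) I expect the main obstacle in the overall argument to lie entirely upstream in the proof of Theorem~\ref{thm:boundary_monotone_equidistribution}; the deduction of Corollary~\ref{cor:hamiltonian_equidistribution} itself is a formality once the definition of a rational Hamiltonian isotopy class in \S\ref{subsec:mapping_torii} is in place.
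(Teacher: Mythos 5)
Your proof is correct and takes exactly the same route as the paper: the corollary is stated immediately after Theorem~\ref{thm:boundary_monotone_equidistribution} as its specialization to the rational Hamiltonian isotopy class $\Ham(Z,\omega)$, which is precisely what you do. One small imprecision in your parenthetical: $\Ham(Z,\omega)$ is closed in $\Diff(Z,\omega)$ not because it is a \emph{rational} Hamiltonian isotopy class but because, by Lemma~\ref{lem:hamiltonian_criterion}, being Hamiltonian is equivalent to the vanishing of a flux-type cohomology class, which is a closed condition; rationality is an orthogonal property and does not by itself give closedness. This does not affect the argument, since the Baire property of $[\phi_0]$ is implicitly used inside the proof of Theorem~\ref{thm:boundary_monotone_equidistribution} for every rational Hamiltonian isotopy class, not just $\Ham(Z,\omega)$.
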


\subsection{An application to mean action inequalities for surface maps} \label{subsec:mean_action}
We use Theorem~\ref{thm:boundary_monotone_equidistribution} to extend some quantitative dynamical results by Hutchings \cite{hutchings_mean_action} for area-preserving disk maps and Weiler \cite{weiler_mean_action} for area-preserving annulus maps to generic Hamiltonian diffeomorphisms of any compact surface with boundary. Let $Z$ be a smooth, compact, oriented surface with smooth, non-empty boundary $\partial Z$, and let $\omega$ be a smooth area form on $Z$. We fix a connected component $\gamma$ of $\partial Z$, and a primitive $\beta$ of $\omega$. Fix any $\phi \in \Diff(Z, \omega)$ such that the closed one-form $\phi^{*}\beta-\beta$ is exact. If $\phi$ is Hamiltonian, this condition holds for any primitive $\beta$. Let $f$ be any primitive of $\phi^*\beta - \beta$. This is well-defined up to the addition of a constant function, and we normalize it as follows. The ergodic average 
$$f^\infty(x) := \lim_{n \to \infty} \frac{1}{n}\sum_{k=0}^{n-1} f(\phi^k(x))$$
is a $\phi$-invariant measurable function which is integrable with respect to $\omega$. It is well-defined on a full-measure subset of $x \in Z$ containing the set of periodic points. It is also well-defined for every $x \in \gamma$ and is equal to a constant everywhere on $\gamma$. We denote by $f_{\phi, \beta, \gamma}$ the unique primitive of $\phi^*\beta - \beta$ such that the ergodic average $f^\infty_{\phi, \beta, \gamma}$ is equal to $0$ on $\gamma$. The function $f_{\phi, \beta, \gamma}$ and its ergodic average $f^\infty_{\phi, \beta, \gamma}$ are respectively called the \textbf{action} and the \textbf{asymptotic mean action} with respect to $\phi$, $\beta$, and $\gamma$. The average of the action is some kind of invariant of $\phi$, whose significance we will describe shortly. 

\begin{defn}[Calabi invariant]
    Fix $\phi \in \Diff(Z, \omega)$, a boundary component $\gamma$ of $Z$, and a primitive $\beta$ of $\omega$ such that $\phi^*\beta - \beta$ is exact. The \textbf{Calabi invariant} of $\phi$ with respect to $\beta$ and $\gamma$ is the average of $f_{\phi, \beta, \gamma}$:
    $$\Cal(\phi,\beta,\gamma):=\big(\int_Z \omega\big)^{-1} \cdot \int_{Z}f_{\phi,\beta,\gamma}\,\omega.$$
\end{defn}

The following lemma lists some basic properties of the asymptotic mean action. 

\begin{lem}\label{lem:propertymean}
Fix $\phi \in \Diff(Z, \omega)$. Assume that $\phi$ fixes a boundary component $\gamma$ of $Z$ and that there exists a primitive $\beta$ of $\omega$ such that $\phi^*\beta - \beta$ is exact. Then the asymptotic mean action satisfies the following properties:
\begin{enumerate}[(a)]
\item Let $\lambda$ be any primitive of $\omega$. If the closed one-form $\lambda - \beta$ is exact, then $f^\infty_{\phi, \lambda, \gamma} = f^\infty_{\phi, \beta, \gamma}$ almost everywhere.  
\item $\int_Z f^\infty_{\phi, \beta, \gamma}\,\omega = \int_Z f_{\phi, \beta, \gamma}\,\omega$. 
\item Fix a periodic orbit $S \in \cP(\phi)$ and any point $x \in S$. Then $f^\infty_{\phi, \beta, \gamma}(x)=S(f_{\phi, \beta, \gamma})/|S|$.
\end{enumerate} 
\end{lem}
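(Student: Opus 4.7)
Each of the three assertions reduces to Birkhoff's ergodic theorem for the area-preserving map $\phi$ on $(Z, \omega)$, combined with the explicit cohomological description of the action $f := f_{\phi,\beta,\gamma}$. I would handle them in the order (c), (b), (a), from most direct to most computational.

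For (c), note that on a periodic orbit $S = \{x_1, \ldots, x_d\}$ of $\phi$, the forward iterates $\{\phi^k(x)\}_{k \ge 0}$ of any $x \in S$ cycle through $S$ with period $d = |S|$. Consequently $\tfrac{1}{nd}\sum_{k=0}^{nd-1} f(\phi^k(x))$ equals $S(f)/|S|$ exactly for every $n \ge 1$, and since $f$ is continuous on the compact surface $Z$ the general Birkhoff averages along arbitrary $n \in \bN$ are sandwiched between consecutive subsequences along multiples of $d$ and must converge to the same limit. This gives $f^\infty(x) = S(f)/|S|$ pointwise on $S$, not merely almost everywhere.

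For (b), I would integrate the partial Birkhoff averages $F_N := \tfrac{1}{N}\sum_{k=0}^{N-1} f\circ\phi^k$ against $\omega$. A change of variables using $\phi^*\omega = \omega$ gives $\int_Z f \circ \phi^k\,\omega = \int_Z f\,\omega$ for every $k$, hence $\int_Z F_N\,\omega = \int_Z f\,\omega$ for every $N$. Because $\{F_N\}$ is uniformly bounded by $\|f\|_{C^0}$ and converges $\omega$-a.e.\ to $f^\infty$ by Birkhoff's theorem, the dominated convergence theorem passes the identity to the limit.

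For (a), the key is a coboundary computation. Since $\lambda - \beta$ is exact, write $\lambda = \beta + dg$ with $g \in C^\infty(Z)$; then
\[
\phi^*\lambda - \lambda = (\phi^*\beta - \beta) + d(g\circ\phi - g) = d\bigl(f + g\circ\phi - g\bigr),
\]
so $\tilde f := f + g\circ\phi - g$ is a primitive of $\phi^*\lambda - \lambda$. The telescoping identity $\sum_{k=0}^{n-1}(g\circ\phi^{k+1} - g\circ\phi^k) = g\circ\phi^n - g$ shows that the Birkhoff average of the coboundary $g\circ\phi - g$ tends to $0$ uniformly on $Z$, so $\tilde f^\infty = f^\infty$ wherever either side is defined, and in particular $\tilde f^\infty \equiv 0$ on $\gamma$. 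By the uniqueness clause of the normalization, $f_{\phi,\lambda,\gamma} = \tilde f$, so $f^\infty_{\phi,\lambda,\gamma} = f^\infty_{\phi,\beta,\gamma}$ $\omega$-a.e. The only mild subtlety anywhere in the proof is to track the additive constant in (a); this is absorbed correctly because the coboundary has uniform limit $0$, so no hidden constant slips in when comparing the two normalizations on $\gamma$. I do not anticipate any real obstacle.
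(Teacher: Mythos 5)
Your proposal is correct and follows the same route as the paper's proof, merely supplying the details the paper leaves implicit (the telescoping argument for the coboundary in (a), the dominated convergence step in (b), and the periodic-averaging computation in (c)). One small point in your favor: the paper's phrase ``a primitive $g$ of $\lambda$'' is a typo for a primitive of $\lambda - \beta$, and your $\lambda = \beta + dg$ is the intended reading.
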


\begin{proof}
 Lemma~\ref{lem:propertymean}(a) is proved by direct computation. Fix a primitive $g$ of $\lambda$. The ergodic average of the function $g \circ \phi - g$ is defined everywhere and identically equal $0$, so it follows by definition that $f^\infty_{\phi, \lambda, \gamma} = f^\infty_{\phi, \beta, \gamma}$ almost everywhere. Lemma~\ref{lem:propertymean}(b) follows from the Birkhoff ergodic theorem.  Lemma~\ref{lem:propertymean}(c) follows from the fact that, since the asymptotic mean action is $\phi$-invariant, $f^\infty_{\phi, \beta, \gamma}(x_i) = f^\infty_{\phi, \beta, \gamma}(x)$ for each $x_i \in S$. 
\end{proof}

The second property in Lemma~\ref{lem:propertymean} gives some indication of the significance of the Calabi invariant. The asymptotic mean action $f^\infty_{\phi, \beta, \gamma}$ at a point $x \in Z$ should be thought of as some kind of ``rotation number'' of $\phi$ at the point $x$. From this viewpoint, the Calabi invariant is the ``average rotation number'' of the map $\phi$. Various notions of rotation numbers play prominent roles in two-dimensional conservative dynamics. For example, a classical result of Mather \cite{mather} for twist maps of the annulus with boundary rotation numbers $r_- < r_+$ produces for any $a \in [r_-, r_+]$ a quasiperiodic point with rotation number $a$, which is a periodic point when $a$ is rational. It is interesting and potentially fruitful to consider whether the asymptotic mean action can be used to detect (quasi)periodic points in a similar manner, but for area-preserving diffeomorphisms on any surface, without any twist condition. The following question, which is already difficult due to the generality in which it is posed, makes a first step towards this goal. It asks whether periodic points can be found which have asymptotic mean actions above and below the Calabi invariant. Since $f^\infty_{\phi, \beta, \gamma}$ is $\phi$-invariant, this is equivalently formulated in terms of averages of the asymptotic mean actions over periodic orbits. 

\begin{question}\label{question} Fix $\phi \in \Diff(Z, \omega)$, a boundary component $\gamma$ of $Z$, and a primitive $\beta$ of $\omega$ such that $\phi^*\beta - \beta$ is exact. Then does the inequality
$$\inf_{S \in \mathcal{P}(\phi)} S(f^\infty_{\phi, \beta, \gamma})/|S| \leq \Cal(\phi, \beta, \gamma) \leq \sup_{S \in \mathcal{P}(\phi)} S(f^\infty_{\phi, \beta, \gamma})/|S|$$
hold?  
\end{question}

The first result concerning Question~\ref{question}, proved by Hutchings \cite{hutchings_mean_action}, is the following:
 
\begin{thm}\label{thm:Hutchings} (Hutchings, \cite{hutchings_mean_action}) Let $(\mathbb{D}, \pi^{-1}dx \wedge dy)$ denote the standard unit disk, equipped with the standard area form of area $1$. Fix the primitive $\beta = (2\pi)^{-1}(x dy - y dx)$. Then for any $\phi \in \Diff(\mathbb{D}, \omega)$, such that $\phi$ is a rotation near the boundary and $\Cal(\phi,\beta,\partial \mathbb{\mathbb{D}})<0$, we have the inequality
$$\inf_{S\in \cP(\phi)} S(f^\infty_{\phi,\beta,\partial \mathbb{D}})/|S| \leq \Cal(\phi,\beta,\partial\mathbb{D}).$$
\end{thm}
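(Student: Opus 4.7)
The plan is to use periodic Floer homology (PFH) of the mapping torus of $\phi$ together with PFH spectral invariants and a Weyl-type asymptotic law. Since $\phi$ is a rotation in a collar neighborhood of $\partial\bD$, I would first extend $\phi$ to an area-preserving diffeomorphism $\tilde\phi$ of the closed sphere $S^{2}$ by capping off with a suitable rotation on a second disk, so that the closed mapping torus $M_{\tilde\phi}$ has periodic orbits that are either periodic orbits of $\phi$ inside $\bD$ or ``boundary-type'' orbits lying in (a neighborhood of) the capping disk. The twisted PFH $\TWPFH(M_{\tilde\phi})$, with the twisting chosen so that the relevant action functional restricts to $f_{\phi,\beta,\partial\bD}$ on orbit sets of $\phi$, is the Floer-theoretic vehicle for the proof.

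Next, I would invoke the machinery of PFH spectral invariants: for each nonzero $\sigma \in \TWPFH(M_{\tilde\phi})$ of degree $d$, one obtains a real number $c_{\sigma}(\tilde\phi)$ realized as $\cO(f_{\phi,\beta,\partial\bD})$ for some integral orbit set $\cO$ of total period $d$ representing $\sigma$ at the chain level. Choosing a canonical sequence of nonzero classes $(\sigma_{d})_{d \geq 1}$ in each degree, I would establish the Weyl law
$$\lim_{d \to \infty} \frac{c_{\sigma_{d}}(\tilde\phi)}{d} \;=\; \Cal(\phi,\beta,\partial\bD).$$
The assumption $\Cal(\phi,\beta,\partial\bD) < 0$ plays a quantitative role here. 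Since $f^{\infty}_{\phi,\beta,\partial\bD}$ is normalized to vanish on $\partial\bD$, orbit sets supported in (or escaping to) the capping region contribute average action near $0$; the strictly negative Calabi invariant therefore forces the spectral orbit sets $\cO_{d}$ to consist, for all large $d$, of honest periodic orbits of $\phi$ in the interior of $\bD$.

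Finally, for each large $d$ pick such an orbit set $\cO_{d} = \sum_{k} a_{k,d}\, S_{k,d}$ with $|\cO_{d}|=d$, $\cO_{d}(f_{\phi,\beta,\partial\bD}) = c_{\sigma_{d}}(\tilde\phi)$, and every $S_{k,d} \in \cP(\phi)$. The identity
$$\frac{\cO_{d}(f_{\phi,\beta,\partial\bD})}{|\cO_{d}|} \;=\; \sum_{k} \frac{a_{k,d}\,|S_{k,d}|}{d} \cdot \frac{S_{k,d}(f_{\phi,\beta,\partial\bD})}{|S_{k,d}|}$$
exhibits $c_{\sigma_{d}}(\tilde\phi)/d$ as a convex combination of the per-period actions of the simple constituent orbits, so at least one simple orbit $S_{d} \in \cP(\phi)$ satisfies $S_{d}(f_{\phi,\beta,\partial\bD})/|S_{d}| \leq c_{\sigma_{d}}(\tilde\phi)/d$. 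Applying Lemma~\ref{lem:propertymean}(c) to replace $f_{\phi,\beta,\partial\bD}$ by $f^{\infty}_{\phi,\beta,\partial\bD}$ on each orbit and letting $d \to \infty$ via the Weyl law yields the desired bound
$$\inf_{S \in \cP(\phi)} \frac{S(f^{\infty}_{\phi,\beta,\partial\bD})}{|S|} \;\leq\; \Cal(\phi,\beta,\partial\bD).$$

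The main obstacle is twofold: establishing the Weyl law for the twisted PFH spectral invariants on $M_{\tilde\phi}$, and ensuring the interior-support property of the spectral-invariant orbit sets. The Weyl law requires an SFT-style compactness and transversality analysis for pseudoholomorphic curves in $\bR \times M_{\tilde\phi}$ together with a computation of the leading asymptotics in terms of $\Cal$. The interior-support property is where the hypothesis $\Cal < 0$ enters decisively: it provides the quantitative action gap needed to separate the spectral level from the ``boundary level'' $0$, so neck-stretching and bubbling in the capping region cannot absorb the spectral invariant. Once these two ingredients are in place, the remainder of the argument is the elementary convex-combination step given above.
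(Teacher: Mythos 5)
Note first that the paper does not actually prove Theorem~\ref{thm:Hutchings}: it is quoted from \cite{hutchings_mean_action} as motivation, and the present paper's own contribution in this direction (Theorem~\ref{thm:mean_action}) is proved by an entirely different route, namely via generic equidistribution. So there is no ``paper's own proof'' to compare against, and I will instead compare your sketch with Hutchings' original argument.

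Your sketch has the right skeleton but a different Floer-theoretic carrier than the one Hutchings uses. Hutchings realizes $\phi$ (with irrational boundary rotation number) as the Poincar\'e return map of a nondegenerate Reeb flow on $S^3$ to a disk-like global surface of section, whose binding is an elliptic Reeb orbit $e$; he then works with the embedded contact homology of $(S^3,\lambda)$, its spectral invariants $c_k$, and the ECH Weyl law of Cristofaro-Gardiner--Hutchings--Ramos. The role your ``capping disk'' plays is played there by $e$. Your proposal extends $\phi$ to $S^2$ and works with twisted PFH of $M_{\tilde\phi}$; this is essentially equivalent (PFH of a nondegenerate area-preserving surface map agrees with ECH of the associated stable Hamiltonian $3$-manifold), but the bookkeeping around the boundary is cleaner in the ECH picture because the only extraneous orbit is the single binding orbit $e$ and its covers. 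Both routes rest on exactly the two ingredients you isolate: the Weyl-law asymptotics $c_k/k \to \Cal(\phi,\beta,\partial\bD)$, and the quantitative use of $\Cal<0$ to rule out spectral orbit sets concentrated on covers of $e$ (whose normalized action tends to the boundary rotation number, hence to $0$ after normalization). The closing convex-combination step is precisely Hutchings' pigeonhole argument, and invoking Lemma~\ref{lem:propertymean}(c) to pass from $f_{\phi,\beta,\partial\bD}$ to $f^{\infty}_{\phi,\beta,\partial\bD}$ is correct. One caveat you should make explicit: nondegeneracy of the capping/binding orbit requires the boundary rotation angle to be irrational, and the general case in Hutchings' theorem is recovered by an approximation argument from the irrational case; your sketch silently assumes this genericity.
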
 

Weiler \cite{weiler_mean_action} later proved a version of Theorem \ref{thm:Hutchings} with similar assumptions for area-preserving annulus diffeomorphisms. The first author \cite{Pirnapasov21} showed that Theorem~\ref{thm:Hutchings} holds without requiring the disk map to be a rotation near the boundary. As an application, it was proved that if $\phi$ is a pseudo-rotation then $\Cal(\phi,\beta,\partial \mathbb{D})=0$. An analogous result has been shown for three-dimensional Reeb flows with two simple periodic orbits in \cite{cristofarogardiner2021contact}. Le Calvez \cite{Cal} recently gave another proof of Theorem \ref{thm:Hutchings} using finite-dimensional methods. 

Our Theorem~\ref{thm:boundary_monotone_equidistribution} answers Question~\ref{question} in the affirmative for generic Hamiltonian diffeomorphisms of a compact surface with any genus and number of boundary components. 

\begin{thm}\label{thm:mean_action}
Let $Z$ be a compact surface with non-empty boundary and let $\omega$ be any area form. Fix any primitive $\beta$ of $\omega$ and any component $\gamma$ of $\partial Z$. Then for a $C^\infty$-generic $\phi \in \Ham(Z, \omega)$, the following inequality holds:
\begin{equation} \label{eq:calabi_inequality} \inf_{S \in \mathcal{P}(\phi)} S(f^\infty_{\phi, \beta, \gamma})/|S| \leq \Cal(\phi, \beta, \gamma) \leq \sup_{S \in \mathcal{P}(\phi)} S(f^\infty_{\phi, \beta, \gamma})/|S|.\end{equation}
\end{thm}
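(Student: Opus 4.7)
The plan is to deduce Theorem~\ref{thm:mean_action} from Corollary~\ref{cor:hamiltonian_equidistribution} in a rather direct way. Fix generic $\phi\in\Ham(Z,\omega)$ for which an equidistributed sequence of orbit sets $(\cO_i)_{i\in\mathbb N}$ exists; writing $\cO_i=\sum_k a_{i,k}\cdot S_{i,k}$, this means $\cO_i/|\cO_i|\to(\int_Z\omega)^{-1}\omega$ weakly as Borel probability measures. Because $\phi$ is Hamiltonian, $\phi^{*}\beta-\beta$ is exact for any primitive $\beta$, so the action $f:=f_{\phi,\beta,\gamma}$ is a well-defined \emph{smooth} function on $Z$, and $\phi$ fixes $\gamma$ pointwise (the generating Hamiltonian is locally constant on $\partial Z$), so Lemma~\ref{lem:propertymean} applies.

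The first step is to notice that, even though $f^\infty:=f^\infty_{\phi,\beta,\gamma}$ is only measurable and thus not a priori testable against the weakly convergent measures $\cO_i/|\cO_i|$, its values on periodic orbits agree with those of the continuous function $f$ in the appropriate averaged sense. Precisely, for any $S\in\cP(\phi)$, Lemma~\ref{lem:propertymean}(c) says $f^\infty$ is constant on $S$ with value $S(f)/|S|$, hence
\[
S(f^\infty)=|S|\cdot\frac{S(f)}{|S|}=S(f).
\]
Summing over the orbits of $\cO_i$ gives $\cO_i(f^\infty)=\cO_i(f)$, so dividing by $|\cO_i|$ yields both expressions
\[
\frac{\cO_i(f^\infty)}{|\cO_i|}=\frac{\cO_i(f)}{|\cO_i|}=\frac{\sum_k a_{i,k}|S_{i,k}|\cdot (S_{i,k}(f^\infty)/|S_{i,k}|)}{\sum_k a_{i,k}|S_{i,k}|}.
\]
The rightmost quantity is a convex combination of the averages $S_{i,k}(f^\infty)/|S_{i,k}|$, and therefore
\[
\inf_{S\in\cP(\phi)}\frac{S(f^\infty)}{|S|}\;\le\;\frac{\cO_i(f^\infty)}{|\cO_i|}\;\le\;\sup_{S\in\cP(\phi)}\frac{S(f^\infty)}{|S|}.
\]

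The second step identifies the limit of $\cO_i(f^\infty)/|\cO_i|$ with the Calabi invariant. Since $f\in C^\infty(Z)\subset C^0(Z)$ is now a legitimate test function, the equidistribution property of $(\cO_i)$ gives
\[
\lim_{i\to\infty}\frac{\cO_i(f^\infty)}{|\cO_i|}=\lim_{i\to\infty}\frac{\cO_i(f)}{|\cO_i|}=\Bigl(\int_Z\omega\Bigr)^{-1}\int_Z f\,\omega=\Cal(\phi,\beta,\gamma),
\]
where the last equality is the definition of $\Cal(\phi,\beta,\gamma)$ (and equals $(\int_Z\omega)^{-1}\int_Z f^\infty\,\omega$ by Lemma~\ref{lem:propertymean}(b), which is a consistency check). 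Combining with the sandwich inequality above and passing to the limit in $i$ yields \eqref{eq:calabi_inequality}.

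In effect, there is no real obstacle once the generic equidistribution theorem is in hand: the single idea needed is to replace the measurable observable $f^\infty$ by the smooth primitive $f$, which is legitimate precisely because the two functions have identical orbit averages by Lemma~\ref{lem:propertymean}(c). The only mild point to be careful about is the normalization of $f$ forcing $f^\infty\equiv 0$ on $\gamma$ so that the quantity on the right-hand side of the limit really is the Calabi invariant as defined in the paper; this is automatic since we take $f=f_{\phi,\beta,\gamma}$, and the Birkhoff-type identity in Lemma~\ref{lem:propertymean}(b) ensures that $\int_Z f\,\omega=\int_Z f^\infty\,\omega$ regardless of the normalization chosen.
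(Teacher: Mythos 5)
Your proof is correct and essentially the same as the paper's: reduce from $f^\infty_{\phi,\beta,\gamma}$ to the smooth primitive $f_{\phi,\beta,\gamma}$ via Lemma~\ref{lem:propertymean}(c), invoke generic equidistribution, and sandwich $\Cal(\phi,\beta,\gamma)$ between extreme orbit-averages (your convex-combination phrasing is equivalent to the paper's choice of extremal orbits $S_i^\pm$ and passage to $\liminf/\limsup$). One small slip in the setup: a Hamiltonian with $H_t$ locally constant on $\partial Z$ yields a flow preserving each boundary component as a set but not pointwise in general (e.g.\ rotations of the disk), and Lemma~\ref{lem:propertymean} only requires, and it only holds that, $\gamma$ is $\phi$-invariant as a set.
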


\begin{proof}
By the third item in Lemma \ref{lem:propertymean}, it suffices to prove the version of \eqref{eq:calabi_inequality} with the measurable function $f^\infty_{\phi, \beta, \gamma}$ replaced by the smooth function $f_{\phi, \beta, \gamma}$. By Theorem~\ref{thm:boundary_monotone_equidistribution}, a generic $\phi \in \Ham(Z, \omega)$ has a sequence of orbit sets $\cO_i$ such that 
$$\lim_{i \to \infty} \cO_i(f_{\phi, \beta, \gamma})/|\cO_i| = (\int_Z \omega)^{-1} \cdot \int_Z f_{\phi, \beta, \gamma}\,\omega = \Cal(\phi, \beta, \gamma).$$

For each $i$, choose the simple orbits $S_i^+$, $S_i^-$ from $\cO_i$ on which $f_{\phi, \beta, \gamma}$ has respectively the largest and smallest averages out of all orbits in $\cO_i$. The equidistribution implies
$$\liminf_{i \to \infty} S_i^-(f_{\phi, \beta, \gamma})/|S_i^-| \leq \Cal(\phi, \beta, \gamma) \leq \limsup_{i \to \infty} S_i^+(f_{\phi, \beta, \gamma})/|S_i^+|$$
which implies \eqref{eq:calabi_inequality}. 
\end{proof}

We also deduce the following refinement of Theorem~\ref{thm:mean_action}. It is analogous to a result of Bechara Senior--Hryniewicz-Salom\~ao \cite[Theorem $1.9$]{action_linking} for three-dimensional contact forms admitting equidistributed sequences of Reeb orbits. The proof is also very similar and we omit it. 

\begin{thm}\label{thm:mean_action_refined}
Let $Z$ be a compact surface with non-empty boundary and $\omega$ any area form of area $1$. Fix any primitive $\beta$ of $\omega$ and any component $\gamma$ of $\partial Z$. Then for a $C^\infty$-generic Hamiltonian diffeomorphism $\phi$ and any $\epsilon > 0$, the following is true. Suppose that $\Cal(\phi, \beta, \gamma) \geq 0$ (resp. $\Cal(\phi, \beta, \gamma) < 0$). Write $P_\epsilon^+$ for the closure of the set of periodic points $x$ such that $$f^\infty_{\phi, \beta, \gamma}(x) \geq (1 - \epsilon)\Cal(\phi, \beta, \gamma)$$ (resp. $\leq (1 - \epsilon)\Cal(\phi, \beta, \gamma)$) and $P_\epsilon^-$ for the closure of the set of periodic points such that 
$$f^\infty_{\phi, \beta, \gamma}(x) \leq (1 + \epsilon)\Cal(\phi, \beta, \gamma)$$ 
(resp. $\geq (1 + \epsilon)\Cal(\phi, \beta, \gamma)$). Then both $P_\epsilon^-$ and $P_\epsilon^+$ have positive measure with respect to the area form $\omega$. 
\end{thm}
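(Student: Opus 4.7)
Proof proposal. The plan is to deduce this from the equidistribution produced by Theorem~\ref{thm:boundary_monotone_equidistribution} by a contradiction argument in the spirit of \cite[Theorem~1.9]{action_linking}. Rescale so that $\int_Z \omega = 1$ and abbreviate $f := f_{\phi,\beta,\gamma}$, $f^\infty := f^\infty_{\phi,\beta,\gamma}$, and $c := \Cal(\phi,\beta,\gamma)$. For a generic $\phi \in \Ham(Z,\omega)$, Theorem~\ref{thm:boundary_monotone_equidistribution} produces orbit sets $\cO_i$ with $\mu_i := \cO_i/|\cO_i|$ converging weakly to $\omega$, whence $\mu_i(f) \to c$. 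The key structural input I exploit is Lemma~\ref{lem:propertymean}(c): $f^\infty$ is constant on each simple orbit $S$ with value $S(f)/|S|$, so every simple orbit lies entirely in or entirely outside each of the closed sets $P_\epsilon^{\pm}$.

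Focus on the case $c > 0$; the case $c < 0$ is entirely symmetric. For $P_\epsilon^+$, suppose for contradiction that $\omega(P_\epsilon^+) = 0$, and split each $\cO_i = \cO_i^{\geq} + \cO_i^{<}$ according to whether $S(f)/|S| \geq (1-\epsilon)c$ or not. Orbits in $\cO_i^{\geq}$ lie in the closed set $P_\epsilon^+$, so the closed-set portmanteau inequality gives $r_i := |\cO_i^{\geq}|/|\cO_i| \leq \mu_i(P_\epsilon^+)$ and hence $\limsup_i r_i \leq \omega(P_\epsilon^+) = 0$. Bounding $f$ above by $\max_Z f$ on $\cO_i^{\geq}$ and by $(1-\epsilon)c$ on $\cO_i^{<}$ gives
\[
\mu_i(f) \;\leq\; r_i \cdot \max_Z f + (1-r_i)(1-\epsilon)c,
\]
which in the limit forces $c \leq (1-\epsilon)c$, contradicting $c > 0$. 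The argument for $P_\epsilon^-$ is the mirror image: split at the threshold $(1+\epsilon)c$, bound $f$ below by $\min_Z f$, and conclude $c \geq (1+\epsilon)c$, again impossible.

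The main obstacle is the borderline case $c = 0$, where both inequalities above degenerate to $0 \leq 0$ and no contradiction is available. Here I would split on the structure of $f^\infty$. If $f^\infty$ vanishes $\omega$-almost everywhere, then every periodic point satisfies both defining conditions, and by Theorem~\ref{thm:boundary_monotone_density} the generically dense set of periodic points forces $P_\epsilon^{\pm} = Z$. Otherwise Lemma~\ref{lem:propertymean}(b) forces both $\{f^\infty > 0\}$ and $\{f^\infty < 0\}$ to have positive $\omega$-measure; one then applies the open-set half of portmanteau to the open sets $\{F_n > \eta\}$ and $\{F_n < -\eta\}$, where $F_n := n^{-1}\sum_{k=0}^{n-1} f \circ \phi^k$ are the continuous Birkhoff averages of $f$ and satisfy $F_n(x) = f^\infty(x)$ whenever the period of $x$ divides $n$. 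Carefully matching the period of the orbits appearing in $\cO_i$ with the truncation parameter $n$, so that $\{F_n > \eta\}$ genuinely detects membership in $\{f^\infty > 0\}$ at the relevant periodic points, is the technical heart of this step and closely parallels the argument of \cite[Theorem~1.9]{action_linking}.
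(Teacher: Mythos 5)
The paper itself omits this proof, deferring to the analogy with \cite[Theorem~1.9]{action_linking}, so there is no written argument to compare against line by line. Your argument for the case $\Cal(\phi,\beta,\gamma)\neq 0$ is exactly the expected adaptation: the decomposition of $\cO_i$ at the threshold $(1\mp\epsilon)c$, the observation (via Lemma~\ref{lem:propertymean}(c)) that each simple orbit lies entirely in or entirely outside $P_\epsilon^{\pm}$, the closed-set portmanteau bound on $r_i$, and the resulting inequality $c\leq(1-\epsilon)c$ (resp.\ $c\geq(1+\epsilon)c$) are all correct and give a clean contradiction when $c\neq 0$. Good.

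The problem is that the statement includes $\Cal(\phi,\beta,\gamma)=0$, and there your argument has genuine gaps in both branches. First, the claim that ``if $f^\infty$ vanishes $\omega$-almost everywhere, then every periodic point satisfies both defining conditions'' is false: $f^\infty$ is only a measurable, $\phi$-invariant function, and the set of periodic points has $\omega$-measure zero, so an $\omega$-a.e.\ statement about $f^\infty$ places no constraint whatsoever on its values at periodic points. Nothing prevents $f^\infty$ from being, say, strictly negative at every periodic point while vanishing almost everywhere. Second, the ``otherwise'' branch is not just a technical loose end; as written it would fail. The open-set portmanteau inequality $\liminf_i\mu_i(U)\geq\omega(U)$ holds for a \emph{fixed} open $U$, but the set $U=\{F_n>\eta\}$ must be chosen with $n$ depending on the orbit periods appearing in $\cO_i$ for the inequality $F_n(x)>\eta$ to say anything about $f^\infty(x)$, and those periods grow without bound as $i\to\infty$. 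For a fixed $n$ the sign of $F_n(x)$ at a periodic point $x$ whose period does not divide $n$ carries no information about $\operatorname{sign} f^\infty(x)$; for $n=n_i$ varying with $i$ the portmanteau step does not apply. Some uniform-in-$n$ lower bound on $\mu_i(\{F_n>\eta\})$ would be needed, and you have not supplied one. Until the borderline case $c=0$ is handled (or the statement is restricted to $c>0$ and $c<0$, which is what the cited Reeb-flow analogue in \cite{action_linking} effectively covers, since there the volume playing the role of $c$ is automatically positive), the proof is incomplete.
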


Versions of Theorems~\ref{thm:mean_action} and \ref{thm:mean_action_refined} can be formulated for generic maps within any rational Hamiltonian isotopy class and any corresponding primitive $\beta$ for which the Calabi invariant can be defined. 

\subsection{Outline of proofs} 

\subsubsection{Generic equidistribution theorems} There are two main ideas behind the proof of Theorem \ref{thm:boundary_equidistribution}. We give an outline of the proof and highlight these ideas where they appear in the process. Fix a compact surface $Z$ with non-empty boundary and an area form $\omega$. We attach disks to the boundary components of $Z$ and extend the area form to produce a smooth, closed surface $\Sigma$ with area form $\Omega$; the boundary components form a set $L$ of disjoint embedded loops in $\Sigma$. 

The goal is, given $\phi \in \Diff(Z, \omega)$, to find a $C^\infty$-small perturbation $\phi'$ which has an equidistributed sequence of orbit sets. The first main idea is that, if the capping disks have equal areas, any $\phi \in \Diff(Z, \omega)$ can be extended to some $\psi \in \Diff(\Sigma, \Omega)$ which preserves $L$. This is done in \S\ref{subsec:extension}. This is useful because we can now work in the setting of closed surfaces, which is more amenable to the techniques from Periodic Floer homology featuring prominently in \cite{equidistribution}. We then adapt the arguments of \cite{equidistribution} to prove a technical ``near-equidistribution'' result for area-preserving maps of $\Sigma$ which preserve $L$. This is done in \S\ref{subsec:lag_equidistribution}; some technical transversality arguments are required in the proof. Applying this result shows that there is some $\psi' \in \Diff(\Sigma, \Omega)$ which is $C^\infty$-close to $\psi$ and has a ``nearly equidistributed'' orbit set, that is an orbit set whose distribution in the surface is almost uniform in a precise quantitative sense. The second main idea is that, since $\psi'$ preserves $L$, the loops $L$ separate the dynamics of $\psi'$ on $Z$ from the dynamics of $\psi'$ on $\Sigma \setminus Z$. The perturbation $\psi'$ restricts to a map $\phi' \in \Diff(Z, \omega)$ which is $C^\infty$-close to $\phi$. Moreover, if $\cO \in \cP_{\bR}(\psi')$ is the previously found nearly equidistributed orbit set, then a bit of careful analysis implies that $\cO \cap Z$ is an orbit set of $\phi'$ which is nearly equidistributed in $Z$. This is carried out in \S\ref{subsec:equidistribution_proof}. A standard Baire category argument then implies Theorem~\ref{thm:boundary_equidistribution}.

Theorem~\ref{thm:boundary_monotone_equidistribution} is proved in the same manner as Theorem~\ref{thm:boundary_equidistribution}. The same argument works because the extension construction in \S\ref{subsec:extension} preserves the rationality property if the areas of $\Sigma$ and $Z$ are rationally dependent, which is easy to ensure, and because if $\psi' \in \Diff(\Sigma, \Omega)$ is a rational map which preserves $L$, its restriction $\phi' \in \Diff(Z, \omega)$ is also rational. 

\subsubsection{Generic density theorems} Theorems~\ref{thm:boundary_density} and \ref{thm:boundary_monotone_density} are, as mentioned before, easier to prove than the generic equidistribution results. We outline the proof of Theorem~\ref{thm:boundary_monotone_density}; the proof of Theorem~\ref{thm:boundary_density} is similar. 

Start with some rational $\phi \in \Diff(Z, \omega)$ and extend it to a rational map $\psi \in \Diff(\Sigma, \Omega)$. Given any open set $U \subset \Sigma$, the results of \cite{CGPZ21} imply that it suffices to perturb $\psi$ by a small Hamiltonian diffeomorphism, compactly supported in $U$, to create a periodic point in $U$. Carrying this out for an open subset $U \subset Z \setminus \partial Z$ and restricting to $Z$ produces a $C^\infty$-small Hamiltonian perturbation $\phi'$ of $\phi$ which has a periodic point through $U$. Note that the rationality property is preserved under Hamiltonian perturbation, so $\phi'$ is also rational. This shows that a dense subset of $\Diff(Z, \omega)$ has a periodic point in $U$, after which a standard Baire category argument proves Theorem~\ref{thm:boundary_monotone_density}. 

\subsection{Acknowledgements} Many thanks to Dan Cristofaro-Gardiner for introducing the authors to each other and for useful discussions regarding this project. AP is grateful to Barney Bramham and Alberto Abbondandolo for helpful discussions. The work of AP is partially supported by the DFG SFB/TRR 191 ``Symplectic Structures in Geometry, Algebra and Dynamics'', Projektnummer 281071066-TRR 191, and the ANR CoSyDy ``Conformally Symplectic Dynamics beyond symplectic dynamics'' (ANR-CE40-0014). The work of RP is supported by the National Science Foundation under Award No. DGE-1656466. 

\section{Preliminaries}\label{sec:prelim}

This section introduces all of the necessary preliminaries for the arguments in this paper, besides those stated in the introduction. 

\subsection{Area-preserving and Hamiltonian diffeomorphisms}\label{subsec:maps}

Let $\Sigma$ be any compact, oriented, smooth surface, possibly with boundary, equipped with an area form $\Omega$.

\subsubsection{Hamiltonian diffeomorphisms of compact surfaces} Write $C^\infty(\bR/\bZ \times \Sigma)$ for the space of smooth functions on $\bR/\bZ \times \Sigma$. For any $H \in C^\infty(\bR/\bZ \times \Sigma)$ and $t \in \bR/\bZ$, we will use $H_t$ to denote its restriction to $\{t\} \times \Sigma$; this is a smooth function on $\Sigma$. If $\Sigma$ has non-empty boundary, we require $H_t$ to be locally constant on $\partial\Sigma$ for every $t$. This guarantees that the Hamiltonian vector field $X_{H_t}$ is tangent to the boundary, so its flow, denoted by $\{\psi^t_H\}_{t \in \bR}$, is defined for all times. 

For any $\phi \in \Diff(\Sigma, \Omega)$ and $H \in C^\infty(\bR/\bZ \times \Sigma)$, we write $\phi^H \in \Diff(\Sigma, \Omega)$ to denote the composition $\phi \circ \psi^1_H$ of $\phi$ with the time-one map of $H$. 

\subsubsection{Maps preserving Lagrangians} An \textbf{Lagrangian} $L$ is a set of pairwise disjoint, embedded closed curves on $\Sigma$. A Lagrangian $L = \sqcup_{i=1}^n \gamma_i$ is \textbf{inessential} if there is a set $\{D_i\}_{i=1}^n$ of pairwise disjoint embedded closed disks such that the component $\gamma_i$ is the boundary of $D_i$ for every $i = 1, \ldots, n$. 

Let $L$ be any Lagrangian in $\Sigma$ and write $C^\infty(\Sigma; L)$ for the space of smooth functions on $\Sigma$ which are locally constant on $L$. We write $\Diff(\Sigma, \Omega; L)$ for the space of area-preserving maps $\phi \in \Diff(\Sigma, \Omega)$ such that $\phi(L) = L$. We write $C^\infty(\bR/\bZ \times \Sigma; L)$ for the space of smooth functions $H$ such that $H(t, -) \in C^\infty(\Sigma; L)$ for every $t \in \bR/\bZ$.

\subsubsection{Families of Hamiltonians} For any $N \geq 1$, write $C^\infty([0,1]^N \times \bR/\bZ \times \Sigma)$ for the space of smooth functions on $[0,1]^N \times \bR/\bZ \times \Sigma$. For any $N \geq 1$, $H \in C^\infty([0,1]^N \times \bR/\bZ \times \Sigma)$ and $\tau \in [0,1]^N$, we use $H^\tau$ to denote the function $H(\tau, -) \in C^\infty(\bR/\bZ \times \Sigma)$. For any Lagrangian $L$ in $\Sigma$ and $N \geq 1$, write $C^\infty([0,1]^N \times \bR/\bZ \times \Sigma; L)$ for the space of $H$ in $C^\infty([0,1]^N \times \bR/\bZ \times \Sigma)$ such that $H^\tau \in C^\infty(\bR/\bZ \times \Sigma; L)$ for every $\tau \in [0,1]^N$.

\subsubsection{Nondegeneracy} A periodic orbit $S = \{x_1, \ldots, x_d\}$ is \textbf{nondegenerate} if the Poincar\'e return map $D\phi^d(x_1): T_{x_1}\Sigma \to T_{x_1}\Sigma$ does not have $1$ as an eigenvalue. The map $\phi$ is \textbf{$d$-nondegenerate} if any periodic orbit with period $\leq d$ is nondegenerate and \textbf{nondegenerate} if it is $d$-nondegenerate for each $d \geq 1$. 

\subsection{The mapping torus construction}\label{subsec:mapping_torii} Fix any $\phi \in \Diff(\Sigma, \Omega)$. Then the \textbf{mapping torus}, denoted by $M_\phi$, is the quotient of $[0, 1]_t \times \Sigma$ by the identification $(1, p) \sim (0, \phi(p))$. It carries a vector field $R$ induced by the vector field $\partial_t$ on $[0,1] \times \Sigma$. The periodic orbits of $R$ correspond to periodic orbits of $\phi$. The mapping torus also carries a closed two-form $\Omega_\phi$ induced from $\Omega$ and a closed one-form $dt$. 

Write $A := \int_{\Sigma} \Omega$ for the area of $\Sigma$. A map $\phi \in \Diff(\Sigma, \Omega)$ is \textbf{rational} if and only if $A^{-1} \cdot [\Omega_\phi] \in H^2(M_\phi; \bR)$ is a rational class. Given any two Hamiltonian isotopic maps $\phi$ and $\phi'$, $\phi$ is rational if and only if $\phi'$ is. 

This follows from direct computation which uses the following identification of their mapping torii. Fix a Hamiltonian $H$ such that $\phi' = \phi \circ \psi^1_H$. Define
$$M_H: M_\phi \to M_{\phi'}$$
as the map induced by the diffeomorphism
$$(t, x) \mapsto (t, (\psi^t_H)^{-1}(x))$$
on $[0,1] \times \Sigma$. Pulling back $\Omega_{\phi'}$ by $M_H$, we find 
$M_H^*\Omega_{\phi'} - \Omega_\phi = d(H dt)$, an exact two-form. This implies $\Omega_{\phi'}$ is a real multiple of a rational class if and only $\Omega_\phi$ is. The maps $M_H$ will make frequent appearances below.

Hamiltonian isotopy is an equivalence relation on $\Diff(\Sigma, \Omega)$, and an equivalence class of this relation is called a Hamiltonian isotopy class. A \textbf{rational} Hamiltonian isotopy class is one where every element in the class is rational. The following lemma states a necessary and sufficient homological condition for $\phi \in \Diff(\Sigma, \Omega)$ to be Hamiltonian. 

\begin{lem} \label{lem:hamiltonian_criterion}
Suppose $\Sigma$ has non-empty boundary. Fix a map $\phi \in \Diff(\Sigma, \Omega)$ which is isotopic through area-preserving diffeomorphisms to the identity. Then $\phi$ is Hamiltonian if and only if the two-form $\Omega_\phi$ on the mapping torus $M_\phi$ is exact. 
\end{lem}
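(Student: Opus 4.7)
The proof splits into two implications.

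\emph{Forward direction.} Suppose $\phi = \psi^1_H$. By the construction of \S\ref{subsec:mapping_torii} applied to $(\Id, \phi)$, there is a diffeomorphism $M_H \colon M_{\Id} \to M_\phi$ satisfying $M_H^*\Omega_\phi - \Omega_{\Id} = d(H\,dt)$. Since $\partial\Sigma \neq \emptyset$, we have $H^2(\Sigma; \bR) = 0$, so $\Omega$ admits a primitive $\lambda$ on $\Sigma$; hence $\Omega_{\Id} = \pi^*\Omega = d(\pi^*\lambda)$ is exact on $M_{\Id} = S^1 \times \Sigma$. Therefore $M_H^*\Omega_\phi = d(\pi^*\lambda + H\,dt)$ is exact, and so is $\Omega_\phi$.

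\emph{Backward direction.} Suppose $\Omega_\phi = d\eta$ for some 1-form $\eta$ on $M_\phi$. The vector field $R = \partial_t$ on $M_\phi$ satisfies $\iota_R\Omega_\phi = 0$ and $\iota_R dt = 1$. Set $F := \iota_R\eta \in C^\infty(M_\phi)$ and $\tilde\eta := \eta - F\,dt$; then $\iota_R\tilde\eta = 0$, and a direct computation gives $d\tilde\eta = \Omega_\phi + dt \wedge d_\Sigma F$. Lifting $\tilde\eta$ to the cover $[0,1] \times \Sigma$, it takes the form $\mu_t$ where $\mu_t$ is a smooth $t$-family of 1-forms on $\Sigma$ with no $dt$-component. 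The equation for $d\tilde\eta$ splits as $d_\Sigma\mu_t = \Omega$ (so each $\mu_t$ is a primitive of $\Omega$) together with $\dot\mu_t = d_\Sigma F_t$, while the gluing $(1, x) \sim (0, \phi(x))$ in $M_\phi$ forces $\mu_1 = \phi^*\mu_0$ and $F_1 = \phi^*F_0$. Integrating in $t$ yields
$$\phi^*\mu_0 - \mu_0 \;=\; d_\Sigma\!\Big(\!\int_0^1 F_t\,dt\Big),$$
so $\phi^*\mu_0 - \mu_0$ is exact on $\Sigma$.

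Now fix any area-preserving isotopy $\{\phi_t\}$ from $\Id$ to $\phi$ with generator $X_t$, and set $\alpha_t := \iota_{X_t}\Omega$. Each $\alpha_t$ is closed and vanishes on $T\partial\Sigma$ (since $X_t$ is tangent to $\partial\Sigma$). Using the Cartan formula to compute $\tfrac{d}{dt}\phi_t^*\mu_0 = \phi_t^*\alpha_t + d\big(\phi_t^*(\mu_0(X_t))\big)$ and taking cohomology classes, together with the fact that $\phi_t^*$ acts as the identity on $H^1(\Sigma; \bR)$ (as $\phi_t$ is isotopic to $\Id$), the exactness of $\phi^*\mu_0 - \mu_0$ established above implies the vanishing of the flux
$$\mathrm{Flux} := \int_0^1 [\alpha_t]\,dt \in H^1(\Sigma; \bR).$$
By a standard flux-theoretic argument, vanishing of the flux of an isotopy from $\Id$ to $\phi$ allows one to modify $\{\phi_t\}$ into a Hamiltonian isotopy with the same time-one map $\phi$; the Hamiltonian can be chosen locally constant on $\partial\Sigma$ because every $\alpha_t$ vanishes on $T\partial\Sigma$, so each relevant primitive is automatically locally constant on the boundary.

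\emph{Main obstacle.} The principal technical delicacy is the final flux-to-Hamiltonian upgrade: while this is a routine maneuver in symplectic topology, the paper's convention that Hamiltonians be locally constant (rather than zero or compactly supported) on $\partial\Sigma$ requires that the modification preserve this boundary condition throughout, which is ensured by the vanishing of $\alpha_t$ on $T\partial\Sigma$.
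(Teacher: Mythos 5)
Your forward direction coincides with the paper's. Your backward direction takes a genuinely cleaner route to the intermediate conclusion than the paper does: rather than trivializing $M_\phi$ at the outset by the isotopy diffeomorphism $F$ (the paper's Step 1) and decomposing a primitive of $F^*\Omega_\phi - \Omega$, you work intrinsically on $M_\phi$, split a primitive $\eta$ of $\Omega_\phi$ as $\tilde\eta + F\,dt$ with $\iota_R\tilde\eta = 0$, lift to the cover $[0,1]\times\Sigma$, and read off both the fiberwise condition $d_\Sigma\mu_t = \Omega$ and the monodromy relation $\phi^*\mu_0 - \mu_0 = d\big(\int_0^1 F_t\,dt\big)$ directly. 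The resulting deduction that the flux $\int_0^1[\alpha_t]\,dt$ of any area-preserving isotopy from $\Id$ to $\phi$ vanishes is correct and is arrived at without the WLOG reduction by $\psi^1_f$ that the paper uses.

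Where your argument falls short of a complete proof is exactly the point you flag: the final upgrade from ``flux of the isotopy vanishes'' to ``$\phi$ is Hamiltonian with a Hamiltonian that is locally constant on $\partial\Sigma$'' is cited as ``a standard flux-theoretic argument'' rather than carried out. But that upgrade is precisely what the paper's Steps 2 and 3 do, and it is not an instantaneous consequence of vanishing flux: one must explicitly modify the isotopy $\{\phi^t\}$ by composing it with the time-one flow of the auxiliary vector field $Z_t$ defined by $\Omega(-, Z_t) = -((\phi^t)^{-1})^*\alpha_t$, and then verify by a Stokes'-theorem computation that the generator $\widetilde\lambda_t$ of the modified isotopy is exact for every $t$ (not merely that the integrated class vanishes). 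Your observation that $\alpha_t$ vanishes on $T\partial\Sigma$ is indeed the key reason the auxiliary flow $\eta^{s,t}$ is tangent to $\partial\Sigma$ and hence the resulting Hamiltonian is locally constant there, but as written this is an unproved assertion rather than a verification. To be self-contained, your proof should either carry out the modification (as the paper does) or cite a precise reference establishing the flux-to-Hamiltonian implication in the with-boundary, locally-constant-Hamiltonian category used here.
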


\begin{proof}
We start with the simpler direction. Suppose $\phi$ is Hamiltonian and fix a Hamiltonian $H \in C^\infty(\bR/\bZ \times \Sigma)$ such that $\phi = \psi^1_H$. Then the family $\{\psi^t_H\}_{t \in [0,1]}$ gives an isotopy from the identity to $\phi$. As discussed above, this defines a diffeomorphism
$$M_H: \bR/\bZ \times \Sigma \to M_{\phi}$$
pulling back $\Omega_\phi$ to $\Omega + d(H dt)$. This is an exact two-form, so $\Omega_\phi$ is exact. Now we prove the other direction, which is more difficult, but analogous to proofs of the related assertion that isotopies of zero flux are Hamiltonian. Suppose $\Omega_\phi$ is exact. Our goal is to show that $\phi$ is Hamiltonian. We will do this in $3$ steps. 

\noindent\textbf{Step 1:} This step conducts some basic preparations using the assumption that $\Omega_\phi$ is exact. Let $\{\phi^t\}_{t \in [0,1]}$ denote any isotopy in $\Diff(\Sigma, \Omega)$ such that $\phi^0 = \text{id}$ and $\phi^1 = \phi$, and suppose it is constant near $0$ and $1$. Write $X_t := \frac{\partial}{\partial t}\phi^t$ for the time-dependent vector field generating this isotopy, and write $\lambda_t := \Omega(-, X_t)$ for every $t$. The area-preserving condition implies that each $\lambda_t$ is a closed one-form. We define a diffeomorphism
$$F: \bR/\bZ \times \Sigma \to M_\phi$$
to be the map induced by the diffeomorphism
$$(t, x) \mapsto (t, (\phi^t)^{-1}(x))$$
on $[0,1] \times \Sigma$. Then we compute
$$F^*\Omega_\phi = \Omega + \lambda_t \wedge dt.$$

Since $\Omega_\phi$ and $\Omega$ are exact, it follows that $\lambda_t \wedge dt$ is exact. Choose a primitive 
$$\tau := f_t \wedge dt - \alpha_t$$
where $\{f_t\}_{t \in \bR/\bZ}$ is a smooth loop of functions and $\{\alpha_t\}_{t \in [0,1]}$ is a smooth loop of closed one-forms such that $\lambda_t = \frac{d}{dt}\alpha_t + df_t$. By modifying $\phi$ by the Hamiltonian diffeomorphism $\psi^1_f$, we may assume without loss of generality that $f_t \equiv 0$ for each $t$. By replacing $\alpha_t$ with $\alpha_t - \alpha_0$ for each $t$, we may assume without loss of generality that $\alpha_0 = \alpha_1 \equiv 0$. Since $\lambda_t \equiv 0$ near $t = 0$ and $t = 1$, it follows from the latter assumption that $\alpha_t \equiv 0$ near $t = 0$ and $t = 1$. We also observe that since $X_t$ is tangent to the boundary of $\Sigma$, the one-forms $\lambda_t$ vanish on the boundary of $\Sigma$ for each $t$, which in turn implies that $\alpha_t$ vanishes on the boundary of $\Sigma$ for each $t$. 

\noindent\textbf{Step 2:} This step defines a new isotopy from the identity to $\phi$. For each $t \in [0,1]$ define an autonomous vector field $Z_t$ by the equation $\Omega(-, Z_t) = -((\phi^t)^{-1})^*\alpha_t$, and for each $s \in [0,1]$ write $\eta^{s,t}$ for its time-$s$ flow. Note that this flow exists for all time since $\alpha_t$ vanishes on the boundary for each $t$, so $Z_t$ is always tangent to the boundary. For each $t \in [0,1]$, set $\psi^{t} := \eta^{1,t} \circ \phi^t$, and write $Y_t$ for the time-dependent vector field generating this isotopy. Note that $Z_t \equiv 0$ for $t$ near $0$ and $1$, so $\{\psi^{t}\}_{t \in [0,1]}$ is an area-preserving isotopy from the identity to $\phi$, which is constant for $t$ near $0$ and $1$. Use this to define a new diffeomorphism $G: \bR/\bZ \times \Sigma \to M_\phi$ with $G^*\Omega_\phi = \Omega + \widetilde{\lambda}_t \wedge dt$. 

\noindent\textbf{Step 3:} This step shows that the new isotopy from the previous step is Hamiltonian. We claim that the closed one-form $\widetilde{\lambda}_t$ is exact for every $t$. This is equivalent to showing that the one-form $\tau_t := \int_0^t \widetilde{\lambda}_s ds$ is exact for every $t$. The one-form $\tau_t$ is exact if and only if it integrates to $0$ on any closed smooth loop $\gamma: [0, 1] \to \Sigma$. We compute
\begin{align*}
\int_{\gamma} \tau_t &= \int_0^1 \int_0^t \widetilde{\lambda}_s(\dot\gamma(u))) ds du = \int_0^1 \int_0^t \Omega(\dot\gamma(u), Y_s) ds du = \int_{\psi^{[0, t]}(\gamma)} \Omega. 
\end{align*}

The second equality follows from the fact that $\widetilde{\lambda}_t = \Omega(-,Y_t)$ for each $t$. On the right-hand side, $\psi^{[0,t]}(\gamma)$ denotes the cylinder swept out by $\gamma$ during the isotopy $\psi$ from times $0$ to $t$. The third equality uses the fact that the isotopy preserrves $\Omega$. The cylinder $\psi^{[0,t]}(\gamma)$ is homotopic relative to its boundary to the concatenation of the cylinders $\phi^{[0,t]}(\gamma)$ and $\eta^{[0,1], t}(\phi^t(\gamma))$. We use this fact and Stokes' theorem to simplify the right-hand side further: 
\begin{align*}
\int_{\gamma}\tau_t &= \int_{\phi^{[0,t]}(\gamma)} \Omega + \int_{\eta^{[0,1], t}(\phi^t(\gamma))} \Omega \\
&= \int_\gamma (\int_0^t \lambda_s ds) - \int_{\phi^t(\gamma)} (\phi^{-t})^*\alpha_t \\
&= \int_{\gamma} (\int_0^t \lambda_s ds) - \alpha_t = 0. 
\end{align*}

Since the closed one-form $\widetilde{\lambda}_t$ is exact for every $t$ and it is equal to $\Omega(-, Y_t)$, it follows that the time-dependent vector field $Y_t$ is a Hamiltonian vector field for every $t$. Therefore, the original map $\phi$ is Hamiltonian. 
\end{proof}

\section{Area-preserving maps of closed surfaces preserving Lagrangians} \label{sec:maps_preserving_lagrangians}

Fix for the remainder of this section a closed, smooth oriented surface $\Sigma$ with area form $\Omega$ of area $B$ and a Lagrangian $L \subset \Sigma$. The purpose of this section is to prove several technical results moving towards generic density and equidistribution for area-preserving maps of $\Sigma$ preserving $L$. That is, we do not need to show that a generic area-preserving map preserving $L$ has dense/equidistributed periodic orbits for the proofs of our main results, but we need technical results which get almost all the way there. 

\subsection{Results towards generic density} We prove some propositions here which will be used later to prove Theorems~\ref{thm:boundary_density} and \ref{thm:boundary_monotone_density}. 

\begin{prop} \label{prop:closing_lemma}
Fix a rational map $\phi \in \Diff(\Sigma, \Omega; L)$. For any open set $U \subset \Sigma$, there exists an arbitrarily $C^\infty$-small Hamiltonian $H \in C^\infty(\bR/\bZ \times \Sigma; L)$, compactly supported in $\bR/\bZ \times U$, such that the perturbed map $\phi^H = \phi \circ \psi^1_H$ has a periodic point in $U$. 
\end{prop}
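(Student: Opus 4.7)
The plan is to reduce Proposition~\ref{prop:closing_lemma} to the closing lemma for rational area-preserving diffeomorphisms of closed surfaces from \cite{CGPZ21}: for any rational $\phi \in \Diff(\Sigma, \Omega)$ and any nonempty open set $W \subset \Sigma$, there is an arbitrarily $C^\infty$-small Hamiltonian $H \in C^\infty(\bR/\bZ \times \Sigma)$ compactly supported in $\bR/\bZ \times W$ such that $\phi \circ \psi^1_H$ has a periodic point in $W$. The key geometric observation is that the Lagrangian $L$ is a finite union of embedded closed curves, hence one-dimensional, so any nonempty open set $U \subset \Sigma$ contains a nonempty open disk $V$ with $\overline{V} \cap L = \emptyset$.

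The steps are then as follows. First, choose such a disk $V \subset U$ disjoint from $L$. Next, apply the closing lemma of \cite{CGPZ21} to the rational map $\phi$ and the open set $V$; this produces, at any prescribed $C^\infty$-size, a Hamiltonian $H$ with support contained in $\bR/\bZ \times V$ such that $\phi^H = \phi \circ \psi^1_H$ has a periodic point in $V \subset U$. Finally, verify the Lagrangian compatibility of the output: because the support of $H$ lies in $\bR/\bZ \times V$ and $V \cap L = \emptyset$, the function $H_t$ vanishes on an open neighborhood of $L$ for every $t$, so $H$ automatically lies in $C^\infty(\bR/\bZ \times \Sigma; L)$. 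Moreover $\psi^t_H$ is the identity on this neighborhood, so $\phi^H$ still preserves $L$ and hence belongs to $\Diff(\Sigma, \Omega; L)$.

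The main (and only) obstacle is the invocation of the closing lemma from \cite{CGPZ21}, which is the deep PFH-theoretic input used as a black box; this is precisely the extension of the closing lemma of \cite{CGPZ21} to the rational setting referred to in the outline above. All other ingredients are elementary: shrinking $U$ away from the one-dimensional $L$ is immediate, and the support condition on $H$ automatically takes care of both the ``locally constant on $L$'' requirement and the preservation of $L$ by $\phi^H$. The hypothesis that $\phi$ is rational is assumed in the proposition and is exactly what is needed to apply \cite{CGPZ21}, so no further preparations are required.
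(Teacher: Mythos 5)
Your proof is correct and reaches the same conclusion via essentially the same input (the PFH closing lemma of \cite{CGPZ21}), but the reduction is slightly cleaner than the paper's. The paper fixes a non-negative $H$ compactly supported in $\bR/\bZ \times U$ that is arranged to vanish on $\bR/\bZ \times L$, and then ``repeats verbatim'' the spectral-invariant argument of \cite[\S 5]{CGPZ21} to find $\delta$ with $\phi^{\delta H}$ having a periodic point in $U$. You instead observe that $L$ is one-dimensional, so you may shrink $U$ to a disk $V$ with $\overline V \cap L = \emptyset$ and invoke the \cite{CGPZ21} closing lemma as a black box on $(\phi, V)$; the Lagrangian constraint on $H$ is then automatic, and $\phi^H$ still preserves $L$ since $\psi^1_H = \mathrm{id}$ near $L$. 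Both steps are valid; what your approach buys is that no part of the \cite{CGPZ21} argument needs to be re-examined for $L$-compatibility, since the perturbation lives entirely away from $L$. The one place to be slightly careful (and the paper flags this in a footnote) is that the spectral-invariant argument in \cite{CGPZ21} tacitly assumes the perturbation support avoids the PFH reference cycles; this is harmless in your setup since $V$ can be shrunk further if needed, but it is worth mentioning so the black-box invocation is airtight.
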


\begin{proof}
Choose a non-negative Hamiltonian $H \neq 0$, which is compactly supported in $(0,1) \times U \subset \bR/\bZ \times \Sigma$, and vanishes on $\bR/\bZ \times L$. Repeating verbatim the arguments of \cite[\S$5$]{CGPZ21} implies that for some $\delta \in [0, 1]$, the perturbed map $\phi^{\delta H} = \phi \circ \psi^1_{\delta H}$ has a periodic point in $U$, which proves the proposition. 

We sketch the main points of the argument here for the convenience of the reader\footnote{For the experts: The argument given assumes for simplicity that $H$ is disjoint from the reference cycles used to construct the PFH spectral invariants.}. It uses spectral invariants from periodic Floer homology. There exists a sequence of positive integers $d_m \to \infty$, depending only on the Hamiltonian isotopy class of $\phi$, and for each $\delta \in [0,1]$ a corresponding sequences of real-valued invariants of $\phi^{\delta H}$ called \emph{PFH spectral invariants}. This sequence is denoted by $c_{d_m}(\phi^{\delta H})$; each spectral invariant is equal to an ``action'' of an orbit set of $\phi^{\delta H}$ of period $d_m$. The spectral invariants are quite sensitive to Hamiltonian perturbations. For any Hamiltonian $K$, the invariants for $\phi$ and $\phi^K = \phi \circ \psi^1_K$ satisfy the following asymptotic relation: 
\begin{equation}\label{eq:weyl}\lim_{m \to \infty} \frac{c_{d_m}(\phi^K) - c_{d_m}(\phi)}{d_m} = B^{-1}\int_{M_\phi} K\,dt \wedge \Omega_\phi > 0.\end{equation}

Suppose for the sake of contradiction that $\phi^{\delta H}$ does not have a periodic point in $U$. Formal properties of the PFH spectral invariants then imply that for every $m$, $c_{d_m}(\phi^{\delta H})$ does not depend on $\delta$. The identity \eqref{eq:weyl} implies that the spectral invariants cannot be constant in $\delta$ as claimed, so we arrive at a contradiction and some $\phi^{\delta H}$ must have a periodic point in $U$. 
\end{proof}

\begin{prop} \label{prop:closing_lemma2}
Fix a map $\phi \in \Diff(\Sigma, \Omega; L)$. Assume that $L$ is inessential. For any open set $U \subset \Sigma$, there exists an arbitrarily $C^\infty$-close map $\phi' \in \Diff(\Sigma, \Omega; L)$ such that $\phi'$ has a periodic point in $U$. 
\end{prop}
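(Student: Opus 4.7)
The plan is to reduce to Proposition~\ref{prop:closing_lemma}. Specifically, I would first produce a rational map $\tilde{\phi} \in \Diff(\Sigma, \Omega; L)$ arbitrarily $C^\infty$-close to $\phi$, and then apply Proposition~\ref{prop:closing_lemma} to $\tilde{\phi}$ to create a periodic point in $U$. The inessentiality hypothesis enters precisely at the approximation step.

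For the approximation step, I would prove that rational maps are $C^\infty$-dense in $\Diff(\Sigma, \Omega; L)$ when $L$ is inessential. To this end, I consider flux-type perturbations $\tilde{\phi} := \phi \circ \eta^s$, where $\{\eta^s\}_{s \in \bR}$ is the flow of the autonomous symplectic vector field $Y$ determined by $\iota_Y \Omega = \alpha$ for a closed one-form $\alpha$ supported in $\Sigma \setminus L$. Since $\alpha$ vanishes near $L$, so does $Y$, and hence $\eta^s \in \Diff(\Sigma, \Omega; L)$ for all $s$, with $\eta^s \to \Id$ in $C^\infty$ as $s \to 0$. A direct computation, analogous to the one preceding Lemma~\ref{lem:hamiltonian_criterion}, shows that under the natural diffeomorphism $F: M_\phi \to M_{\tilde\phi}$ induced by the isotopy $\{\eta^s\}$, one has $[F^*\Omega_{\tilde\phi}] - [\Omega_\phi] = -s \cdot [\alpha \wedge dt] \in H^2(M_\phi; \bR)$, where $[\alpha \wedge dt]$ denotes the image of $[\alpha]$ under the natural map $H^1(\Sigma; \bR) \to H^2(M_\phi; \bR)$ appearing in the Wang exact sequence of the fibration $M_\phi \to S^1$.

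The role of inessentiality is the following: since each component of $L$ bounds a disk in $\Sigma$, every closed one-form on $\Sigma$ has zero period around $L$, and a cutoff argument then modifies it by an exact form to produce a cohomologous representative vanishing near $L$. Consequently, every class in $H^1(\Sigma; \bR)$ is realized by some $[\alpha]$ with $\alpha$ supported in $\Sigma \setminus L$, so the full image of the Wang map $H^1(\Sigma;\bR) \to H^2(M_\phi;\bR)$---which by the Wang exact sequence is a codimension-one ``flux'' subspace complementary to the (fixed) area class---can be accessed by $L$-preserving perturbations. Since rational classes are dense in $H^2(M_\phi; \bR)$ and the area $A = \int_\Sigma \Omega$ is preserved under the perturbation, I can choose $\alpha$ and $s > 0$ arbitrarily small so that $A^{-1}[\Omega_{\tilde\phi}]$ is rational; then $\tilde\phi$ is a rational element of $\Diff(\Sigma, \Omega; L)$ arbitrarily $C^\infty$-close to $\phi$.

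Finally, applying Proposition~\ref{prop:closing_lemma} to $\tilde\phi$ and $U$ yields an arbitrarily $C^\infty$-small Hamiltonian $H \in C^\infty(\bR/\bZ \times \Sigma; L)$ compactly supported in $\bR/\bZ \times U$ such that $\phi' := \tilde\phi \circ \psi^1_H$ has a periodic point in $U$. Then $\phi' \in \Diff(\Sigma, \Omega; L)$ is $C^\infty$-close to $\phi$ and has a periodic point in $U$, completing the proof. The main obstacle in this outline is the rationalization step, in particular verifying that the $L$-preserving flux perturbations span enough of $H^2(M_\phi; \bR)$ to reach rational classes near $[\Omega_\phi]$; the inessentiality of $L$ is exactly what makes this possible through the cohomology-class computation described above.
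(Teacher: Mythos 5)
Your proof is correct and takes essentially the same approach as the paper: you reduce to Proposition~\ref{prop:closing_lemma} by showing that rational maps preserving $L$ are $C^\infty$-dense in $\Diff(\Sigma,\Omega;L)$ (the paper's Proposition~\ref{prop:rational_maps_dense}), and you establish that density by perturbing $\phi$ with the flux of a closed one-form which, thanks to the inessentiality of $L$, can be cut off to vanish near $L$ within its cohomology class. The only difference is presentational: you verify rationality abstractly via the Wang exact sequence of $M_\phi \to S^1$, whereas the paper pairs $\Omega_{\phi'}$ against an explicit Mayer--Vietoris basis of $H_2(M_\phi;\bZ)$ built from cycles $c_i$ chosen away from $L$.
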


Proposition~\ref{prop:closing_lemma2} is immediate from Proposition~\ref{prop:closing_lemma} and the following proposition, which shows that area-preserving maps preserving $L$ can be approximated by rational ones, as long as $L$ is inessential. 

\begin{prop} \label{prop:rational_maps_dense}
Assume that $L$ is inessential. Then the set of rational maps preserving $L$ is $C^\infty$-dense in $\Diff(\Sigma, \Omega; L)$. 
\end{prop}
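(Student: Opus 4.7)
The plan is to perturb $\phi$ within $\Diff(\Sigma, \Omega; L)$ by the time-$1$ map of an area-preserving vector field supported away from $L$, chosen so that $B^{-1}[\Omega_{\phi'}] \in H^2(M_{\phi'}; \bR)$ becomes a rational class. Any area-preserving vector field $X$ vanishing in a neighborhood of $L$ has time-$1$ flow $\eta_X$ equal to the identity near $L$, so $\phi' := \phi \circ \eta_{tX}$ lies in $\Diff(\Sigma, \Omega; L)$ and is $C^\infty$-close to $\phi$ for small $t$. Such vector fields correspond bijectively to closed $1$-forms $\alpha = \iota_X\Omega$ on $\Sigma$ compactly supported in $\Sigma \setminus L$.

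I would then compute the effect of this perturbation on cohomology. Repeating the argument in Step 1 of Lemma~\ref{lem:hamiltonian_criterion} verbatim, the isotopy $\{\phi \circ \eta_{tX}\}_{t\in[0,1]}$ induces a diffeomorphism $M_\phi \to M_{\phi'}$ under which $\Omega_{\phi'}$ pulls back to $\Omega_\phi + \alpha \wedge dt$. In the Wang sequence for the mapping torus, the class $[\alpha \wedge dt]$ is the image of $[\alpha] \in H^1(\Sigma;\bR)$ under the connecting map $\delta\colon H^1(\Sigma;\bR) \to H^2(M_\phi;\bR)$, and $\delta$ surjects onto the kernel of the restriction $H^2(M_\phi;\bR) \to H^2(\Sigma;\bR) = \bR$. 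Since $[\Omega_\phi]$ always evaluates to $B$ on $[\Sigma]$, the ``$[\Sigma]$-component'' of $B^{-1}[\Omega_\phi]$ is automatically $1 \in \bQ$, so the whole class is rational if and only if its image in the cokernel of restriction to $H^2(\Sigma)$ is rational, and this is exactly what the perturbation controls.

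The step where inessentiality of $L$ is used is the assertion that every class in $H^1(\Sigma;\bR)$ is represented by a closed $1$-form compactly supported in $\Sigma \setminus L$, i.e.\ that the natural map $H^1_c(\Sigma \setminus L;\bR) \to H^1(\Sigma;\bR)$ is surjective. To see this, write $L = \bigsqcup_i \gamma_i = \bigsqcup_i \partial D_i$ with the $D_i$ pairwise disjoint embedded disks, and fix a bicollar $V_i \cong \gamma_i \times (-\epsilon,\epsilon)$ of each $\gamma_i$. For any closed $1$-form $\lambda$ on $\Sigma$, the restriction $\lambda|_{V_i}$ decomposes as $c_i\, d\theta_i + df_i$; but $\lambda$ extends smoothly across $D_i$, while $d\theta_i$ does not, forcing $c_i = 0$. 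Thus $\lambda|_{V_i}$ admits a primitive $f_i$, and choosing bump functions $\chi_i$ supported in $V_i$ and identically $1$ near $\gamma_i$, the form $\lambda - \sum_i d(\chi_i f_i)$ is cohomologous to $\lambda$ and compactly supported in $\Sigma \setminus L$.

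Combining these two facts, the perturbations $\phi \circ \eta_{tX}$ described above realize an open neighborhood of $B^{-1}[\Omega_\phi]$ in $H^2(M_\phi;\bR)$; since $H^2(M_\phi;\bQ)$ is dense there, arbitrarily small such perturbations produce rational maps, proving the proposition. I expect the main obstacle to be the careful bookkeeping in the second paragraph: matching the cohomological change under the perturbation with the connecting map of the Wang sequence through the isotopy-induced identification $M_\phi \cong M_{\phi'}$, so that the surjectivity established from inessentiality actually translates into density of rational classes for $[\Omega_\phi]$.
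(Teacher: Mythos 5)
Your proposal follows the same route as the paper's proof: perturb $\phi$ by the time-one flow of the area-preserving vector field dual to a small closed $1$-form supported away from $L$, and use inessentiality to show such forms realize all the needed cohomology; indeed your cutoff construction $\lambda - \sum_i d(\chi_i f_i)$ is word-for-word the one in the paper. The difference is the bookkeeping: the paper produces explicit generators $[\Sigma], [\overline{S}_1], \dots, [\overline{S}_m]$ of $H_2(M_\phi;\bZ)$ from a basis of $\ker(\phi_* - \Id)$ represented by curves in the interior of $Z$ (which is where inessentiality enters), then tracks $\int_{\overline{S}_i}\Omega_\phi$; you instead invoke the Wang exact sequence, and your topological input --- surjectivity of $H^1_c(\Sigma\setminus L;\bR)\to H^1(\Sigma;\bR)$ --- is the Poincar\'e dual of the paper's surjectivity of $H_1(Z;\bZ)\to H_1(\Sigma;\bZ)$. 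Both deliver the same thing; the paper's version is more self-contained, yours is shorter once the Wang machinery is accepted.

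Two places need fixing. (i) The map $(t,x)\mapsto(t,\eta_{-tX}(x))$ on $[0,1]\times\Sigma$ descends to a homeomorphism $M_\phi\to M_{\phi'}$ but not to a diffeomorphism: the images of $\partial_t$ on the two sides of the gluing slice do not match unless $\phi'$ preserves $X$ (try $\Sigma = T^2$, $\phi$ a Dehn twist, $X$ a small translation vector field). Correspondingly, $\alpha\wedge dt$ does not satisfy the gluing compatibility on $M_\phi$ unless $\phi^*\alpha = \alpha$. The fix is the one Step $1$ of Lemma~\ref{lem:hamiltonian_criterion} already builds in --- so you would not be repeating it ``verbatim'' without it: reparameterize the isotopy to be constant near $t=0,1$, after which the correction term becomes $\rho'(t)\,\alpha\wedge dt$ for a bump function $\rho$; this descends to $M_\phi$ and represents $\delta([\alpha])$. (ii) The last paragraph overclaims: the perturbations do not realize an open neighborhood of $B^{-1}[\Omega_\phi]$ in $H^2(M_\phi;\bR)$, only an open subset of the codimension-one affine subspace $B^{-1}[\Omega_\phi] + B^{-1}\operatorname{im}\delta$, since $\operatorname{im}\delta = \ker\bigl(H^2(M_\phi;\bR)\to H^2(\Sigma;\bR)\bigr)$. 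The conclusion survives because $\operatorname{im}\delta$ is a rationally defined subspace and $B^{-1}[\Omega_\phi]$ restricts to $1\in\mathbb{Q}$ on the fiber, so rational classes are dense in that affine subspace --- but that is the statement one needs to invoke, not density of $H^2(M_\phi;\mathbb{Q})$ in all of $H^2(M_\phi;\bR)$.
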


\begin{proof}
The proof closely follows the argument for the full group $\Diff(\Sigma, \Omega)$ from \cite{CGPZ21}. For any given map $\phi \in \Diff(\Sigma, \Omega)$, the cohomology class $[\Omega_\phi] \in H^2(M_\phi; \mathbb{Z})$ has pairing with the image of the class $[\Sigma]$ in $H_2(M_\phi; \mathbb{Z})$ equal to the area $A$ of the surface. It follows that $\phi$ is rational if and only if the integral of $\Omega_\phi$ on any $2$-dimensional integral cycle in $M_\phi$ is a rational multiple of $B$. 

Choose a union of pairwise disjoint embedded disks $\mathbb{D} \subset \Sigma$ bounding the Lagrangian $L$. Write $Z$ for the surface with boundary $L$ constructed by removing the interior of $\mathbb{D}$. Fix any map $\phi \in \Diff(\Sigma, \Omega; L)$. Define $K$ to be the kernel of the map
\begin{equation*} \phi_* - \text{Id}: H_1(\Sigma; \mathbb{Z}) \to H_1(\Sigma; \mathbb{Z}). \end{equation*}

The Mayer--Vietoris sequence and the fact that $L$ bounds a set of disjoint disks implies that the map $H_1(Z; \bZ) \to H_1(\Sigma; \bZ)$ induced by the inclusion $Z \hookrightarrow \Sigma$ is surjective. It follows that there exists a set of oriented closed curves $c_1, \ldots, c_m$ which form a basis of $K$ and are all contained in the interior of $Z$. 

For each $i \in \{1, \ldots, m\}$, there exists a smooth integral $2$-chain $S_i$ in $\Sigma$ such that $\partial S_i = \phi(c_i) - c_i$. Then for each $i$, we define a closed $2$-chain $\overline{S}_i$ in $M_\phi$ by summing the image of $S_i$ in 
$$\{0\} \times \Sigma \subset M_\phi$$
with the image of $[0,1] \times c_i \subset [0,1] \times \Sigma$ after projection to the mapping torus. It follows from the Mayer--Vietoris sequence that $H_2(M_\phi; \bZ)$ is generated by $[\Sigma]$ and $[\overline{S}_1], \ldots, [\overline{S}_m]$. 

Now fix any closed $1$-form $\lambda$ on $\Sigma$ which restricts to $0$ on $L$. This defines a unique vector field $X_\lambda$ on $\Sigma$ defined by the equation $\Omega(X_\lambda, -) = \lambda(-)$. Write $\{\psi^s_\lambda\}_{s \in \bR}$ for the flow of $X_\lambda$, which preserves $\Omega$ and $L$ since $\lambda$ is closed and restricts to $0$ on $L$, and write $\phi' = \phi \circ \psi^1_\lambda$. The arguments of \cite[\S~$5$]{CGPZ21} produce a set of closed integral $2$-chains $\overline{S}_1', \ldots, \overline{S}_m'$ such that $[\Sigma], [\overline{S}_1'], \ldots, [\overline{S}_m']$ span $H_2(M_{\phi'}; \bZ)$ and moreover
$$\int_{\overline{S}_i'} \Omega_{\phi'} = \int_{S_i} \Omega_\phi + \int_{c_i} \lambda$$
for every $i$. From what was said at the beginning of the proof, $\phi'$ is rational if and only if 
$$\int_{\overline{S}_i'} \Omega_{\phi'} \in B \cdot \mathbb{Q}$$
for every $i$. Now the proposition follows if we show that there exist $C^\infty$-small closed $1$-forms $\lambda$ on $\Sigma$ which both restrict to $0$ on $L$ and satisfy 
\begin{equation} \label{eq:rationality} \int_{S_i} \Omega_\phi + \int_{c_i} \lambda \in B \cdot \mathbb{Q} \end{equation}
for every $i$. 

To construct the $1$-forms $\lambda$, we begin by choosing any $C^\infty$-small closed $1$-form $\lambda_0$ on $\Sigma$ which satisfies \eqref{eq:rationality} for every $i$. Choose a contractible open neighborhood $U$ of the union $\mathbb{D}$ of disks bounding $L$ which is disjoint from any of the curves $c_i$. It follows that the restriction of $\lambda_0$ to $U$ is exact, and equal to $df$ for some smooth function $f$ on $U$, which itself can be taken to be $C^\infty$-small when $\lambda_0$ is, for instance by invoking the Poincar\'e lemma. Define a smooth cutoff function $\chi$ which is equal to $1$ on a compact subset $E \subset U$ containing $\mathbb{D}$ and $0$ on the complement of an open neighborhood of $E \subset U$. Then define $\lambda = \lambda_0 - d(\chi f)$. It follows that $\lambda$ is closed, restricts to $0$ on $L$, has the same integrals as $\lambda_0$ on the curves $c_i$, and remains $C^\infty$ small since the $C^\infty$ norm of $\lambda$ depends only on the $C^\infty$ norms of $\lambda_0$, $\chi$, and $f$. This completes the proof of the proposition. 
\end{proof}

\subsection{Results towards generic equidistribution} \label{subsec:lag_equidistribution} We prove the following technical proposition, which asserts that ``nearly equidistributed'' orbit sets can be created by $C^\infty$-small perturbations in $\Diff(\Sigma, \Omega; L)$. It will be used later to prove Theorems~\ref{thm:boundary_equidistribution} and \ref{thm:boundary_monotone_equidistribution}. 

\begin{prop}\label{prop:lag_nearly_equidistributed}
Fix any rational $\phi \in \Diff(\Sigma, \Omega; L)$. Fix any $\epsilon > 0$ and smooth functions $f_1, \ldots, f_N$ on $\Sigma$ which are locally constant on $L$. Then for $C^\infty$-dense $H \in C^\infty(\bR/\bZ \times \Sigma; L)$, the perturbation $\phi^H$ has an orbit set $\cO$ such that for each $i \in \{1, \ldots, N\}$, 
$$|\cO(f_i)/|\cO| - B^{-1}\int_\Sigma f_i\,\Omega| < \epsilon.$$
\end{prop}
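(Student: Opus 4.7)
The plan is to adapt the PFH spectral invariant argument for closed surfaces from the authors' earlier paper \cite{equidistribution} to the $L$-preserving setting. Fix $H_0 \in C^\infty(\bR/\bZ \times \Sigma; L)$ and a $C^\infty$-neighborhood of $H_0$. Replacing $\phi$ by $\phi^{H_0}$, which remains $L$-preserving and rational (the latter since rationality is a Hamiltonian isotopy invariant), reduces us to showing that $\phi^H$ has the desired orbit set for some arbitrarily $C^\infty$-small $H \in C^\infty(\bR/\bZ \times \Sigma; L)$. Pick a bump $\chi: \bR/\bZ \to [0, \infty)$ supported in a proper subinterval with $\int \chi\,dt = 1$, and for small $s > 0$ and $\tau \in [0,1]^N$ define the $N$-parameter family
$$H^\tau(t, x) := s\chi(t) \sum_{i=1}^N \tau_i f_i(x),$$
which lies in $C^\infty([0,1]^N \times \bR/\bZ \times \Sigma; L)$ because each $f_i$ is locally constant on $L$, and has $C^\infty$-norm $O(s)$ uniformly in $\tau$.

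Next I would apply the Weyl asymptotic \eqref{eq:weyl} to each rational perturbation $\phi^{H^\tau}$. Let $\{d_m\}$ be the sequence of PFH degrees depending only on the Hamiltonian isotopy class of $\phi$, and define $g_m(\tau) := c_{d_m}(\phi^{H^\tau})/d_m$. Then \eqref{eq:weyl} yields the pointwise limit
$$g_m(\tau) \xrightarrow{m \to \infty} c_\infty + sB^{-1}\sum_{i=1}^N \tau_i \int_\Sigma f_i\,\Omega =: c(\tau),$$
where $c_\infty := \lim_m c_{d_m}(\phi)/d_m$. The $C^0$-Lipschitz dependence of PFH spectral invariants on the Hamiltonian makes the $g_m$ uniformly Lipschitz in $\tau$, so after passing to a subsequence the convergence is uniform on $[0,1]^N$.

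The next step converts convergence of values into information about derivatives. After a $C^\infty$-small generic perturbation of the family $H^\tau$ within $C^\infty([0,1]^N \times \bR/\bZ \times \Sigma; L)$, a Sard--Smale-type transversality argument produces, for each $m$, an open dense subset $U_m \subset [0,1]^N$ on which $\phi^{H^\tau}$ is nondegenerate up to period $d_m$ and the spectral invariant $c_{d_m}(\phi^{H^\tau})$ is realized by a unique integral orbit set $\cO_{m,\tau}$ of period $d_m$. On $U_m$, the function $g_m$ is smooth, and the standard PFH action formula gives
$$\partial_{\tau_i} g_m(\tau) = \frac{s}{d_m}\,\cO_{m,\tau}(f_i).$$
A coordinate-by-coordinate mean value argument, iterated on a nested sequence of subcubes of $[0,1]^N$ using the uniform convergence $g_m \to c$ together with the affine form of $c$, then furnishes, for every sufficiently large $m$, a point $\tau_m \in U_m$ at which
$$\left| \partial_{\tau_i} g_m(\tau_m) - sB^{-1}\int_\Sigma f_i\,\Omega \right| < s\epsilon \quad\text{for all } i = 1, \dots, N.$$
Combined with the derivative formula and $|\cO_{m,\tau_m}| = d_m$, this yields $|\cO_{m,\tau_m}(f_i)/|\cO_{m,\tau_m}| - B^{-1}\int_\Sigma f_i\,\Omega| < \epsilon$ for every $i$. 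Since $H^{\tau_m}$ is $O(s)$-small in $C^\infty$, choosing $s$ small enough places it in the prescribed neighborhood of $0$.

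The hardest step is the Sard--Smale transversality within the $L$-preserving class: in \cite{equidistribution}, arbitrary Hamiltonian perturbations suffice for generic nondegeneracy and uniqueness of spectral orbit sets, but here Hamiltonian vector fields must be tangent to $L$, restricting perturbations near $L$. The approach is to analyze orbits case by case: those disjoint from $L$ admit full perturbation freedom in a tubular neighborhood, while any orbit meeting $L$ must lie on $L$ by $\phi$-invariance, where the return map is a $1$-dimensional area-preserving diffeomorphism whose normal eigenvalues can be adjusted by varying the transverse-to-$L$ derivatives of $L$-locally-constant Hamiltonians; together these provide enough flexibility to achieve the needed nondegeneracy and generic uniqueness. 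A secondary technicality is the simultaneous-control mean value step, addressed by the recursive nested-cube construction described above.
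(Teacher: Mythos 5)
Your proposal is correct and follows essentially the same route as the paper: both reduce to the PFH Weyl law applied to an $N$-parameter family $H^\tau$ built from $\sum_i \tau_i f_i$ (which stays in $C^\infty(\bR/\bZ \times \Sigma; L)$ precisely because the $f_i$ are locally constant on $L$), differentiate in $\tau$, and handle the $L$-constrained transversality by separately treating orbits off $L$ (full local perturbation freedom) and orbits on $L$ (adjusting the return map via Hamiltonians vanishing on $L$), which is exactly the content of the paper's Lemma~\ref{lem:generic_nondegeneracy} replacing the unconstrained nondegeneracy lemma of the earlier work.
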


The proof of Proposition~\ref{prop:lag_nearly_equidistributed} is identical to the proof of \cite[Proposition~$4.2$]{equidistribution}, replacing the use of the generic nondegeneracy result \cite[Lemma~$3.2$]{equidistribution} with Lemma~\ref{lem:generic_nondegeneracy} below. The proof, just like the proof of Proposition~\ref{prop:closing_lemma}, uses PFH spectral invariants. The rough strategy is to substitute $K = H^\tau$ into the asymptotic formula \eqref{eq:weyl}, where $\phi^{H^\tau}$ of Hamiltonian perturbations parameterized by $\tau \in [0,1]^N$, and then differentiate with respect to $\tau$. We refer the reader to \cite{CGPZ21} or \cite[\S$2$]{equidistribution} for a more detailed discussion of PFH spectral invariants and the Weyl law \eqref{eq:weyl}. The family $H^\tau$ is constructed by taking a $C^\infty$-small perturbation of an $N$-parameter family 
$$F^\tau = \sum_{i=1}^N \tau_i f_i$$
of Hamiltonians explicitly constructed from $f_1, \ldots, f_N$. It is essential that the functions $f_1, \ldots, f_N$ are locally constant on $L$ so that $F^\tau$ is locally constant on $L$ for every $\tau \in [0,1]^N$. We require that the perturbed maps $\phi^{H^\tau}$ are nondegenerate for a full measure set of $\tau \in [0,1]^N$.  The existence of a suitable perturbation $H^\tau \in C^\infty([0,1]^N \times \bR/\bZ \times \Sigma; L)$ requires Lemma~\ref{lem:generic_nondegeneracy} below.

\subsubsection{Generic nondegeneracy} We establish here some transversality results which are necessary to prove Proposition~\ref{prop:lag_nearly_equidistributed} and Theorems~\ref{thm:boundary_equidistribution} and \ref{thm:boundary_monotone_equidistribution}. 

\begin{lem} \label{lem:generic_nondegeneracy}
Fix any $N \geq 1$ and $\phi \in \Diff(\Sigma, \Omega; L)$. Then for $C^\infty$-generic $H \in C^\infty([0,1]^N \times \bR/\bZ \times \Sigma; L)$, there exists a full measure set of $\tau \in [0,1]^N$ such that $\phi^{H^\tau}$ is nondegenerate. 
\end{lem}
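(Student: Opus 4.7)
The plan is to adapt the parametric transversality argument from \cite[Lemma~3.2]{equidistribution} to the $L$-preserving setting. The new ingredient is handling periodic orbits on $L$, where free perturbation of $H$ is constrained by the requirement that $H$ be locally constant on $L$. Since nondegeneracy is equivalent to $d$-nondegeneracy for every $d \geq 1$, the Baire category theorem reduces me, for each fixed $d$, to showing that the set of $H$ making $\phi^{H^\tau}$ be $d$-nondegenerate for a.e.~$\tau$ is $C^\infty$-residual.

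Fix $d$ and a large regularity exponent $k$. I would consider the universal period-$d$ space
$$\mathcal{M}_d := \{(H, \tau, x) \in C^k([0,1]^N \times \bR/\bZ \times \Sigma; L) \times [0,1]^N \times \Sigma : (\phi^{H^\tau})^d(x) = x\}.$$
Since $\phi^{H^\tau}$ preserves $L$, every periodic orbit either lies entirely on $L$ or is disjoint from it, so $\mathcal{M}_d$ decomposes as $\mathcal{M}_d^\circ \sqcup \mathcal{M}_d^L$. For $\mathcal{M}_d^\circ$, the standard argument applies verbatim: small perturbations of $H$ supported in a neighborhood of a periodic orbit disjoint from $L$ realize any first-order displacement of $(\phi^{H^\tau})^d(x)$ in $T_x\Sigma$, so the defining equation is transverse and cuts out a Banach submanifold of codimension~$2$.

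The main obstacle is transversality for $\mathcal{M}_d^L$. Let $\gamma$ be the component of $L$ meeting the orbit in question, and choose local Darboux coordinates $(s,y)$ near $\gamma$ with $\gamma = \{y=0\}$ and $\Omega = ds \wedge dy$. The constraint that $H$ be locally constant on $L$ translates to $\partial_s H(t,s,0) \equiv 0$, and a direct computation gives that the Hamiltonian vector field restricted to $\gamma$ is $\partial_y H(t,s,0)\,\partial_s$. Since $\partial_y H|_\gamma$ is itself unconstrained within $C^k$, perturbing $H$ realizes arbitrary $C^k$ perturbations of the circle diffeomorphism $\phi^{H^\tau}|_\gamma$. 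This gives surjectivity of the linearization of the one-dimensional defining equation $(\phi^{H^\tau}|_\gamma)^d(x) = x$ on $\gamma$, making $\mathcal{M}_d^L$ a Banach submanifold of codimension~$1$ in $C^k \times [0,1]^N \times L$.

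With $\mathcal{M}_d$ identified as a Banach manifold, the projection $\pi_d : \mathcal{M}_d \to C^k$ is Fredholm of index $N$ on each piece, and Sard--Smale furnishes a residual set of regular values. For each regular value $H$ the fiber $\pi_d^{-1}(H)$ is a smooth $N$-manifold, and nondegeneracy of $\phi^{H^\tau}$ at a fiber point $(\tau,x)$ is equivalent to the finite-dimensional projection $(\tau,x) \mapsto \tau$ being a local diffeomorphism at that point; the finite-dimensional Sard theorem then forces the degenerate $\tau$ into a measure-zero subset of $[0,1]^N$. A standard argument upgrades $C^k$-genericity to $C^\infty$-genericity, and intersecting over $d$ yields the lemma.
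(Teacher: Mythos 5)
Your reduction to a fixed period $d$, the split of $\mathcal{M}_d$ into the on-$L$ and off-$L$ pieces, and the Darboux computation showing that within the class $C^k(\cdot;L)$ one can still freely perturb $\phi^{H^\tau}|_\gamma$ via the transverse derivative $\partial_y H|_\gamma$ are all correct and mirror the structure of the paper's argument. However, the transversality step has a genuine gap at \emph{non-simple} period-$d$ points, and this is precisely where the paper does extra work that your outline omits.

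Your universal space $\mathcal{M}_d$ contains points $(H,\tau,x)$ where $x$ has minimal period $p$ with $p \mid d$ and $p < d$. At such a point, writing $A = D(\phi^{H^\tau})^p(x)$ and $k = d/p$, a first-order perturbation of $H$ localized near the simple orbit displaces $(\phi^{H^\tau})^d(x)$ by $(A^{k-1}+\cdots+A+I)V$, not by an arbitrary vector $V$. The operator $A^{k-1}+\cdots+I$ is singular exactly when $A$ has an eigenvalue that is a primitive $j$-th root of unity for some $j \mid k$, $j>1$. At such points the linearization is \emph{not} surjective, $\mathcal{M}_d$ (and $\mathcal{M}_d^L$, $\mathcal{M}_d^\circ$) fails to be a Banach manifold, and Sard--Smale cannot be invoked. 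Restricting $\mathcal{M}_d$ to simple period-$d$ orbits (which removes the problematic points) makes the moduli space a manifold, but then a regular value $\tau$ only guarantees that all \emph{simple} period-$d$ orbits have $D\phi^d$ without eigenvalue $1$; intersecting over $d$ controls only simple orbits, not their iterates, and so falls short of the paper's notion of nondegeneracy (which requires every orbit of every period --- including iterates --- to be nondegenerate).

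The paper handles this by working with the space $\mathcal{N}^{l,d}$ of \emph{simple} period-$d$ orbits (thick diagonal removed, so transversality is unproblematic), and then, crucially, introduces the ``bad set'' $\mathcal{N}^{l,d}_{\text{bad}}$ of tuples where the simple orbit \emph{or one of its iterates} is degenerate. For orbits on $L$ this is the locus $\lambda \in \{1,-1\}$, where $\lambda$ is the eigenvalue of the return map restricted to $T_xL$, and the paper shows this is a codimension-one submanifold by a direct computation with a hyperbolic Hamiltonian $G(t,s) = -st$. Sard--Smale plus finite-dimensional Sard then yields a full-measure set of $\tau$ avoiding $\mathcal{N}^{l,d}_{\text{bad}}$, which controls \emph{all} iterates of all simple period-$d$ orbits at once. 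This bad-set analysis (Lemma~\ref{lem:generic_nondegeneracy_inside_l3} in the paper, and its off-$L$ analogue) is the missing ingredient in your proposal; without it the intersection over $d$ does not close.
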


\begin{lem}\label{lem:make_nondegenerate}
Fix any $\phi \in \Diff(\Sigma, \Omega; L)$ and any finite set $S_1, \ldots, S_k$ of periodic orbits of $\phi$. Then there exist arbitrarily $C^\infty$-small Hamiltonians $H \in C^\infty(\bR/\bZ \times \Sigma; L)$ such that, for each $i \in \{1, \ldots, k\}$, $S_i$ is a nondegenerate periodic orbit of $\phi^H$. 
\end{lem}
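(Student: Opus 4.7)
The plan is to build $H$ as a sum $H = \sum_{i=1}^{k} H_i$ of Hamiltonians with pairwise disjoint supports, each $H_i$ concentrated near a single point of $S_i$ and engineered so that $S_i$ remains a periodic orbit of $\phi^H$ while the linearized return map at that point is nudged off the codimension-one locus of matrices with eigenvalue $1$.

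To set up, since distinct periodic orbits of a diffeomorphism are set-theoretically disjoint, I choose for each $i$ a point $p_i \in S_i$ together with a small open disk $U_i \ni p_i$, arranging that the $U_i$ are pairwise disjoint and that $U_i$ misses every other iterate $\phi^j(p_i)$, $1 \leq j \leq d_i - 1$. If $p_i \in L$ (which, by $\phi$-invariance of $L$, forces all of $S_i$ into $L$), I take $U_i$ to be a Darboux chart with coordinates $(x, y)$ in which $\Omega = dx \wedge dy$ and $L \cap U_i = \{y = 0\}$. I then fix small intervals $I_i \subset (0, 1)$ and take $H_i$ supported in $I_i \times U_i$, required to vanish to second order at $p_i$ for every $t$. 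This last condition forces $X_{H_i}(t, p_i) = 0$, so $p_i$ is fixed by $\psi^1_{H_i}$, and a short chain-rule calculation using the disjointness of the supports yields
$$D(\phi^H)^{d_i}(p_i) \;=\; D\phi^{d_i}(p_i) \cdot D\psi^1_{H_i}(p_i).$$

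The Hamiltonian $H_i$ is then parametrized by its quadratic Taylor jet at $p_i$ as a function of $t$. When $p_i \notin L$, this jet is an unconstrained time-dependent symmetric $2 \times 2$ matrix, and a first-order analysis in the size of the perturbation shows that $D\psi^1_{H_i}(p_i)$ ranges over a neighborhood of the identity in the full three-dimensional group $SL(2, \bR)$. When $p_i \in L$, the condition that $H_i$ be locally constant on $L$ forces $H_i$ to vanish on $\{y = 0\} \cap U_i$, so its quadratic jet must have the form $a(t)\, xy + b(t)\, y^2$ with no $x^2$ term; the resulting linearization of $X_{H_i}$ at $p_i$ is upper triangular with diagonal $(a, -a)$, and $D\psi^1_{H_i}(p_i)$ ranges over a neighborhood of the identity within the two-dimensional upper-triangular subgroup of $SL(2, \bR)$ that stabilizes $T_{p_i} L$.

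In either case, the set of matrices $A$ in the relevant subgroup for which $A \cdot D\phi^{d_i}(p_i)$ has $1$ as an eigenvalue is cut out by the single trace equation $\operatorname{tr}(D\phi^{d_i}(p_i) \cdot A) = 2$, hence is codimension one. A generic choice of the parameters defining $H_i$ therefore makes $S_i$ nondegenerate; performing this simultaneously and independently for each $i$ yields an arbitrarily $C^\infty$-small $H \in C^\infty(\bR/\bZ \times \Sigma; L)$ of the required form. The main obstacle is the boundary-adapted case $p_i \in L$, where both the parameter space of admissible $H_i$ and the target subgroup of $SL(2, \bR)$ shrink by one dimension; verifying that the codimension count still survives hinges on the explicit Darboux-coordinate computation of the quadratic jet sketched above.
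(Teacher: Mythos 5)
Your proposal is correct and follows essentially the same route the paper sketches (the paper omits the proof, saying it follows from the constructions in Lemma~3.4): localize at one point of each orbit, use Darboux/Weinstein coordinates in which $L$ is a coordinate line, and observe that the quadratic Hamiltonian jets that are allowed to be nonzero (the $xy$ and $y^2$ terms) still generate the upper-triangular stabilizer of $T_{p_i}L$, which is enough to move the eigenvalue on that line off $1$; this is precisely the role of the paper's Hamiltonian $G(t,s)=-st$ in its proof of Lemma~3.6. The only point you gloss over is that the lemma allows the $S_i$ to be non-simple or to share an underlying simple orbit, in which case your disjoint-supports setup needs the trivial adjustment of grouping the $S_i$ by underlying simple orbit and avoiding the finite union of bad trace conditions at once.
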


Lemma~\ref{lem:make_nondegenerate} follows from the same constructions used to prove Lemma~\ref{lem:generic_nondegeneracy} and we omit the proof. Lemma~\ref{lem:generic_nondegeneracy} follows from combining the following two lemmas. They respectively address the nondegeneracy of periodic points inside $L$ and outside $L$; the perturbation schemes required in each case are slightly different. 

\begin{lem} \label{lem:generic_nondegeneracy_inside_l}
Fix any $N \geq 1$ and $\phi \in \Diff(\Sigma, \Omega; L)$. Then for $C^\infty$-generic $H \in C^\infty([0,1]^N \times \bR/\bZ \times \Sigma; L)$, there exists a full measure set of $\tau \in [0,1]^N$ such that any periodic orbit of $\phi^{H^\tau}$ which is contained in $L$ is nondegenerate. 
\end{lem}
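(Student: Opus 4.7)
The plan is to reduce the lemma to a parametric Kupka--Smale statement for one-dimensional dynamics and verify it by a Sard--Smale transversality argument. Since $H^\tau$ is required to be locally constant on $L$, in tubular coordinates $(s, y)$ near a component of $L$ with $L = \{y = 0\}$ and $\Omega = ds \wedge dy$ one has $X_{H^\tau_t}|_L = \partial_y H^\tau_t(s, 0)\,\partial_s$. Thus $X_{H^\tau_t}$ is tangent to $L$, the time-$1$ flow $\psi^1_{H^\tau}$ preserves $L$, and the restriction $\Phi^\tau := (\phi^{H^\tau})|_L$ is a smooth $\tau$-family of orientation-preserving diffeomorphisms of the one-manifold $L$. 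Crucially, $\Phi^\tau$ depends on $H^\tau$ only through the ``normal derivative'' $g^\tau_t(s) := \partial_y H^\tau_t(s, 0)$, which can be prescribed as an essentially arbitrary element of $C^\infty([0,1]^N \times \bR/\bZ \times L)$ independently of the values of $H^\tau$ off $L$. For any periodic point $p \in L$ of period $d$, the linearization $D(\phi^{H^\tau})^d(p)$ preserves $T_pL$ and has determinant $1$, so its two eigenvalues are $\mu$ and $1/\mu$, where $\mu$ is the multiplier of $p$ as a periodic point of $\Phi^\tau$. Hence nondegeneracy of $p$ inside $\Sigma$ is equivalent to the purely one-dimensional condition $\mu \neq 1$, and the task reduces to showing that for $C^\infty$-generic $H$, the family $\Phi^\tau$ has only nondegenerate periodic orbits for almost every $\tau$.

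\textbf{Transversality via Sard--Smale.} For each fixed $d \geq 1$, I would introduce the universal moduli space
\begin{equation*}
\mathcal{M}_d := \{(H, \tau, p) \in \mathcal{H} \times [0,1]^N \times L : (\Phi^\tau)^d(p) = p \text{ and } \mu(H, \tau, p) = 1\},
\end{equation*}
where $\mathcal{H} := C^\infty([0,1]^N \times \bR/\bZ \times \Sigma; L)$. Granting the key transversality claim verified in the next paragraph, the two defining equations cut out $\mathcal{M}_d$ as a Banach submanifold of codimension $2$ after passing to an appropriate $C^k$ completion, and the projection $\mathcal{M}_d \to \mathcal{H}$ is a Fredholm map of index $N - 1$. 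The Sard--Smale theorem then produces a residual set of regular values $H \in \mathcal{H}$, for each of which the fiber is a smooth $(N-1)$-dimensional submanifold of $[0,1]^N \times L$, whose projection to $[0,1]^N$ has Lebesgue measure zero by finite-dimensional Sard. A standard $C^k \to C^\infty$ passage and a countable intersection over $d \geq 1$ conclude the proof.

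\textbf{Main obstacle.} The main work is the submersion check at $(H_0, \tau_0, p_0) \in \mathcal{M}_d$ with orbit $\{p_0, \ldots, p_{d-1}\} \subset L$: one must exhibit Hamiltonian variations whose linearized effect on the pair $\bigl((\Phi^\tau)^d(p) - p,\ \mu - 1\bigr)$, measured in local coordinates on $L$, spans $\bR^2$. This is where the availability of free normal-derivative perturbations is used. I would construct a first variation $\delta H_1 \in T_{H_0}\mathcal{H}$ whose normal derivative $\partial_y \delta H_1^{\tau_0}(s, 0)$ is a small spacetime bump near the orbit passage through $p_0$, producing a pure translation of $\Phi^{\tau_0}$ there (so the fixed point moves but the multiplier is unchanged to first order), and a second variation $\delta H_2$ chosen so that $\partial_s \partial_y \delta H_2^{\tau_0}(s, 0)$ is a bump producing a pure shear (so the multiplier changes but the fixed point does not). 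Both are embedded in the $[0,1]^N$-family by multiplying against a bump in $\tau$ near $\tau_0$. A subtlety is that an orbit may revisit a single component of $L$ or pass through several components, in which case one supports the perturbations in disjoint time slabs $I \subset \bR/\bZ$ around the passage time through $p_0$, ensuring that the contributions to the linearization of the $d$-th iterate at the different return instants remain decoupled.
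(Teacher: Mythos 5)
Your overall strategy --- reduce to one-dimensional dynamics on $L$ via the normal derivative of $H$, exploit that the return map preserves $T_pL$ with eigenvalues $\mu$ and $1/\mu$, build a universal moduli space, and run Sard--Smale --- is essentially the paper's. Your translation-plus-shear perturbation scheme for the submersion check is also the right idea and corresponds to the paper's explicit constructions (moving a single orbit point in $L$ for the fixed-point equation; a Hamiltonian of the form $G(t,s) = -st$ near the point to vary the determinant of the return map). However, there is a genuine gap concerning iterated orbits.

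Your moduli space $\mathcal{M}_d$ imposes only the condition $\mu(H,\tau,p) = 1$ on the $d$-th iterate. A simple period-$d$ orbit with multiplier $\mu_d = -1$ does not lie in $\mathcal{M}_d$, yet its double cover is a degenerate periodic orbit of period $2d$, which the lemma also requires you to rule out. Two ways to try to patch this, and why they fail as written: (i) if $\mathcal{M}_d$ is understood to contain only simple period-$d$ orbits (as your ``with orbit $\{p_0,\ldots,p_{d-1}\}$'' wording suggests), then $\mu_d = -1$ is simply never seen, and Sard--Smale over all $d$ still misses these orbits. (ii) If $\mathcal{M}_d$ is allowed to contain points $p$ of lower simple period $d'\,|\,d$, the transversality of the first defining equation $(\Phi^\tau)^d(p)-p$ breaks down at exactly the points you need: if $p$ has simple period $d'$, $d = kd'$ with $k$ even and $\mu_{d'} = -1$, the linearization of $(\Phi^\tau)^d(p)$ with respect to a translation bump at any orbit point is a geometric sum $\bigl(\sum_{i=0}^{k-1}\mu_{d'}^i\bigr)\cdot(\cdot)$, which vanishes. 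The $p$-derivative $D(\Phi^\tau)^d(p) - \mathrm{Id} = \mu_d - 1 = 0$ also vanishes. So the two equations do not cut out a codimension-$2$ submanifold there, and your Fredholm/Sard--Smale count does not apply. Your proposed fix (disjoint time slabs) addresses an orbit visiting distinct points of $L$, but cannot decouple the multiple passes of a multiply-covered orbit through the same point, since the Hamiltonian is $\bR/\bZ$-periodic in time.

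The paper handles this cleanly by working only with the space of simple periodic orbits (topologized inside $L^d\setminus\Delta^d$) and defining the bad locus by $\mu \in \{-1,1\}$ rather than $\mu = 1$: since $\mu$ is real, an iterate of a simple orbit is degenerate if and only if $\mu \in \{-1,1\}$, and the shear perturbation shows both $1$ and $-1$ are regular values of the multiplier map. To repair your argument, replace $\mathcal{M}_d$ by the locus of simple period-$d$ orbits with $\mu_d \in \{\pm 1\}$ (two codimension-$2$ strata), and verify transversality there; the translation perturbation now works because the orbit points are distinct.
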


\begin{lem}\label{lem:generic_nondegeneracy_outside_l}
Fix any $N \geq 1$, and $\phi \in \Diff(\Sigma, \Omega; L)$. Then for $C^\infty$-generic $H \in C^\infty([0,1]^N \times \bR/\bZ \times \Sigma; L)$, there exists a full measure set of $\tau \in [0,1]^N$ such that every periodic orbit of $\phi^{H^\tau}$ which is not contained in $L$ is nondegenerate. 
\end{lem}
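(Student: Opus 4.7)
The plan is a parametric Sard--Smale transversality argument, executed period-by-period and assembled by a Baire category intersection. For each $d \geq 1$, let $\mathcal{G}_d \subset \mathcal{H} := C^\infty([0,1]^N \times \bR/\bZ \times \Sigma; L)$ denote the set of $H$ such that a full-measure set of $\tau \in [0,1]^N$ has every periodic orbit of $\phi^{H^\tau}$ of period at most $d$ and disjoint from $L$ nondegenerate. Since $\mathcal{H}$ is a Fr\'{e}chet space (a Baire space), it suffices to show each $\mathcal{G}_d$ is residual, after which $\bigcap_d \mathcal{G}_d$ gives the lemma. The standard Taubes-trick exhaustion will upgrade $C^k$-residual conclusions to $C^\infty$-residual ones, so I work implicitly with $C^k$-Banach completions of $\mathcal{H}$ below.

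Fix $d$. Introduce the universal zero set
$$\mathcal{M}_d := \{(H, \tau, x) \in \mathcal{H} \times [0,1]^N \times (\Sigma \setminus L) : (\phi^{H^\tau})^d(x) = x\}$$
cut out by $F_d(H,\tau,x) := (\phi^{H^\tau})^d(x) - x$ (in local charts near $x$). Granting for the moment that $DF_d$ is surjective at every zero, $\mathcal{M}_d$ is a smooth Banach submanifold and the projection $\pi : \mathcal{M}_d \to \mathcal{H}$ is Fredholm of index $N$. By Sard--Smale, the set of regular values is residual in $\mathcal{H}$; for such an $H$, the fiber $\mathcal{M}_d^H$ is a smooth $N$-dimensional manifold, and classical Sard applied to the projection $\mathcal{M}_d^H \to [0,1]^N$ gives a full-measure set of $\tau$ at which every fixed point $x$ of $(\phi^{H^\tau})^d$ in $\Sigma \setminus L$ satisfies $1 \notin \operatorname{Spec}(D(\phi^{H^\tau})^d_x)$, which is exactly nondegeneracy.

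The heart of the argument is the transversality claim. At a zero $(H,\tau_0,x_0)$, set $x_i := (\phi^{H^{\tau_0}})^i(x_0)$. Because $\phi$ preserves $L$, each $x_i$ lies in $\Sigma \setminus L$. Since $H^{\tau_0}_t$ is locally constant on $L$, the Hamiltonian vector field $X_{H^{\tau_0}_t}$ is tangent to $L$; hence $\psi^t_{H^{\tau_0}}$ preserves $L$ setwise, and uniqueness of ODEs forces each arc $\{\psi^t_{H^{\tau_0}}(x_i) : t \in [0,1]\}$ to stay in $\Sigma \setminus L$. Pick $t^* \in (0,1)$ and a small open neighborhood $U \subset \Sigma \setminus L$ of $\psi^{t^*}_{H^{\tau_0}}(x_{d-1})$, small enough that only the final iteration's trajectory enters $U$ near $t^*$. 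A variation $\delta H \in T_H \mathcal{H}$ supported in $\{\tau_0\} \times (t^*-\epsilon, t^*+\epsilon) \times U$ vanishes near $L$ and is therefore unconstrained by the $L$-condition. The variation-of-constants formula expresses the resulting first-order change of $(\phi^{H^{\tau_0}})^d(x_0)$ as $D\phi$ applied to an integral of $X_{\delta H_t}(\psi^t_{H^{\tau_0}}(x_{d-1}))$ against a fundamental solution; since Hamiltonian vector fields at a fixed point span the whole tangent space, this sweeps out all of $T_{x_0}\Sigma$. This gives surjectivity of $D_H F_d$, which suffices.

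The main obstacle is handling orbits that are genuine iterates of a simple orbit of smaller underlying period $k \mid d$: there the trajectories $\psi^t(x_i)$ overlap, so a single bump may affect several iterations at once, and the transversality claim as stated must be adapted. I would resolve this by carrying out the Sard--Smale argument at the simple-orbit level first: the return-map eigenvalues $\lambda(\tau)^{\pm 1}$ of a simple period-$k$ orbit vary smoothly with $\tau$, and the transversality argument above shows they are generically non-constant; then the set of $\tau$ where $\lambda(\tau)^m = 1$ for any of the countably many $m$ with $mk \leq d$ is a countable union of measure-zero sets, which may be excluded. Combined with the finite-period bookkeeping of $\mathcal{G}_d$ and the routine Taubes trick, this completes the argument.
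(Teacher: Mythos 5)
Your plan lines up with the paper's proof at the level of structure: Banach-manifold setup via $C^l$-completions with a Taubes-type exhaustion at the end, a universal orbit space with a Fredholm projection of index $N$, Sard--Smale for generic $H$, and classical Sard in $\tau$. You correctly identify both of the real difficulties: (i) the restriction to Hamiltonians locally constant on $L$, which you dispose of by noting that perturbations supported in small balls inside $\Sigma \setminus L$ are unconstrained by the $L$-condition (this is exactly the paper's observation); and (ii) the breakdown of the naive ``fixed points of $(\phi^{H^\tau})^d$'' formulation for multiply covered orbits, which you repair by passing to the simple-orbit level and excluding the countable family of conditions $\lambda^m = 1$. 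The paper does precisely this: it works with a universal space $\mathcal{M}^{l,d}$ of tuples $(H,\tau,S)$ where $S$ is a \emph{simple} orbit outside $L$, and defines the bad locus by the return map having a root of unity as an eigenvalue.

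One step is glossed over in a way that papers over a genuine piece of work. You write that ``the transversality argument above shows [the return-map eigenvalues] are generically non-constant,'' but the transversality you established was surjectivity of $D_H F_d$, i.e.\ the ability to translate the endpoint of the orbit. That shows $\mathcal{M}^{l,d}$ is a Banach manifold; it says nothing about the regularity of the eigenvalue (or trace) function on that manifold, which is a \emph{second-order} quantity at the fixed point. Establishing that the bad locus $\{\lambda^m = 1\}$ is a codimension-$\geq 1$ submanifold requires a perturbation that rotates or stretches the \emph{linearization} at a point of the orbit while fixing the orbit itself. The paper's proof of the companion inside-$L$ lemma spells this out explicitly (a hyperbolic bump Hamiltonian of the form $G(t,s) = -st$), and the outside-$L$ analogue is imported from the reference. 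Also ``generically non-constant'' is weaker than what Sard needs; you need the relevant root-of-unity values to be regular values of the trace along the $\tau$-slices. This is not a fatal flaw in your plan --- the Sard--Smale machinery you set up delivers exactly this once you add the second transversality claim --- but as written the step is asserted rather than proved.
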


\subsubsection{Proof of Lemma~\ref{lem:generic_nondegeneracy_inside_l}} The proof of Lemma~\ref{lem:generic_nondegeneracy_inside_l} proceeds in $4$ steps. 

\noindent\textbf{Step $1$:} For any $l \geq 3$, define $\mathcal{N}^l$ to be the space of tuples $(H, \tau, S)$ where $H \in C^l([0,1]^N \times \bR/\bZ \times \Sigma; L)$, $\tau \in [0,1]^N$, and $S$ is a simple periodic orbit of $\phi^\tau$ which lies in $L$. The space of such simple periodic orbits is topologized as a subset of the disjoint union $\bigsqcup_{d \geq 1} L^d$ of products of $L$, which gives $\mathcal{N}^l$ a natural topology. For any $d \geq 1$, we denote by $\mathcal{N}^{l,d}$ the connected component of $\mathcal{N}^l$ consisting of tuples $(H, \tau, S)$ with $|S| = d$. This step proves the following lemma using the Banach manifold implicit function theorem. 

\begin{lem} \label{lem:generic_nondegeneracy_inside_l2}
For any $l \geq 3$ and $d \geq 1$, the space $\mathcal{N}^{l,d}$ has the structure of a Banach manifold of class $C^{l - 1}$ such that the projection
$$\mathcal{N}^{l,d} \to C^l([0,1]^N \times \bR/\bZ \times \Sigma; L)$$
is a $C^{l-1}$ Fredholm map of index $N$. 
\end{lem}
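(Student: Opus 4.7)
The plan is to realize $\mathcal{N}^{l,d}$ as the preimage of a submanifold under a smooth map of Banach manifolds and then apply the implicit function theorem. Working in local charts on $L^d$ (each component of $L$ is a circle, so $L^d$ is locally modeled on $\bR^d$), I would consider the map
$$F: C^l([0,1]^N \times \bR/\bZ \times \Sigma; L) \times [0,1]^N \times L^d \to L^d \times L^d,$$
$$F(H, \tau, (x_i)) := \bigl((\phi^{H^\tau}(x_i))_{i=1}^d,\, (x_2, \ldots, x_d, x_1)\bigr).$$
This lands in $L^d \times L^d$ because $H^\tau$ locally constant on $L$ makes $X_{H^\tau}$ tangent to $L$, so $\phi^{H^\tau}$ preserves $L$. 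Standard smoothness estimates for Hamiltonian flows show $F$ is of class $C^{l-1}$, and $\mathcal{N}^{l,d}$ is the open subset of $F^{-1}(\Delta_{L^d})$ where the $x_i$ are pairwise distinct.

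The heart of the argument is to show that $F$ is transverse to $\Delta_{L^d}$ by verifying that the $H$-direction alone surjects onto the normal bundle $N_\Delta \cong \bR^d$. I would use two observations. First, because $L$ is Lagrangian in the $2$-dimensional surface $\Sigma$, any $K \in C^\infty(\Sigma; L)$ has $X_K$ tangent to $L$ along $L$, with tangential magnitude determined by the \emph{transverse} derivative of $K$; so, subject to being locally constant on $L$, we are still free to choose $X_K|_L$ to be any tangent vector field on $L$. Second, for a simple orbit $S = (x_1, \ldots, x_d)$, the trajectories $t \mapsto \psi^t_{H^\tau}(x_i)$ are pairwise disjoint graphs in $\bR/\bZ \times \Sigma$, so a perturbation $\delta H \in C^l([0,1]^N \times \bR/\bZ \times \Sigma; L)$ can be supported in a small spacetime neighborhood of a single trajectory. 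Using the variation-of-parameters formula for the Hamiltonian flow together with the first observation, such a $\delta H$ produces an arbitrary prescribed element of $T_{x_{i+1}} L \cong \bR$ in the $i$-th coordinate of the linearized equation and zero in the others.

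With transversality established, the Banach manifold implicit function theorem supplies the $C^{l-1}$ Banach manifold structure on $\mathcal{N}^{l,d}$. To compute the Fredholm index of the projection $\pi: \mathcal{N}^{l,d} \to C^l([0,1]^N \times \bR/\bZ \times \Sigma; L)$, decompose the linearization $\mathcal{L}: T_H C^l \oplus \bR^N \oplus \bR^d \to \bR^d$ of $F$ modulo $\Delta_{L^d}$ as $\mathcal{L}_H + \mathcal{L}_B$, where $\mathcal{L}_B: \bR^{N+d} \to \bR^d$ collects the finite-dimensional variables. Since $\mathcal{L}_H$ was just shown to be surjective, a brief diagram chase gives
$$\dim \ker(d\pi) = (N+d) - r, \qquad \dim \operatorname{coker}(d\pi) = d - r,$$
where $r = \dim \operatorname{image}(\mathcal{L}_B)$, so $\operatorname{ind}(d\pi) = N$. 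The image of $d\pi$ is closed: it equals $\mathcal{L}_H^{-1}(\operatorname{image}(\mathcal{L}_B))$, the preimage of a finite-dimensional (hence closed) subspace under the surjective bounded operator $\mathcal{L}_H$, so $d\pi$ is genuinely Fredholm.

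The main obstacle is the surjectivity of the $H$-linearization in the second paragraph: one must verify that the constraint of being locally constant on $L$ does not obstruct producing arbitrary tangential perturbations of the orbit along $L$. The Lagrangian observation specific to dimension $2$ is exactly what makes this work, and it is the reason the lemma holds without any transversality hypothesis on $\phi$ itself.
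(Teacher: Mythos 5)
Your proposal is correct and takes essentially the same route as the paper: realize $\mathcal{N}^{l,d}$ as the preimage of the diagonal under a $C^{l-1}$ map into $(L^d)^2$, verify transversality by showing the $H$-direction alone hits every element of $\bigoplus_i T_{x_i}L$, and read off the index of the projection. Your justification of the transversality step is in fact somewhat more explicit than the paper's (which invokes a hyperbolic perturbation fixing $x_i$, a construction really tailored to the subsequent lemma on return-map eigenvalues): you correctly isolate the two relevant facts, namely that the locally-constant-on-$L$ constraint still leaves the transverse derivative of $K$ free (hence $X_K|_L$ can be any tangent field to $L$) and that the lifted trajectories $t\mapsto(t,\psi^t_{H^\tau}(x_i))$ are disjoint in spacetime, so the perturbation can be localized to affect a single $x_i$.
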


\begin{proof}[Proof of Lemma~\ref{lem:generic_nondegeneracy_inside_l2}]
Let $\Delta^d \subset L^d$ denote the ``thick diagonal'' consisting of tuples of points $(x_1, \ldots, x_d) \in L^d$ such that $x_i = x_j$ for some $i \neq j$. Define a map
$$\Psi: C^l([0,1]^N \times \bR/\bZ \times \Sigma; L) \times [0,1]^N \times (L^d \setminus \Delta^d) \to (L^d)^2$$
sending a tuple $(H, \tau, S = \{x_1, \ldots, x_d\}$) to the pair of sets
$$((x_1, \ldots, x_d), (\phi^\tau(x_d), \phi^\tau(x_1), \ldots, \phi^\tau(x_{d-1}))).$$

Write $Z \subset (L^d)^2$ for the diagonal. Then by definition, $\mathcal{N}^{l,d} = \Psi^{-1}(Z)$. The linearization $D\Psi$ sends a variation $h$ in the $H$ direction at a point $(H, \tau, S = \{x_1, \ldots, x_d\}) \in \mathcal{N}^{l,d}$ to the pair 
$$((0, \ldots, 0), (V_h(x_d), V_h(x_1), \ldots, V_h(x_{d-1}))) \in \Big(\bigoplus_{i=1}^d T_{x_i}L\Big)^2.$$
Here $V_h$ denotes the derivative $\partial_z|_{z= 0} (\phi \circ \phi^1_{(H_z)^\tau})$ for any family $\{H_z\}_{z \in (-1, 1)}$ in $C^l([0,1]^N \times \bR/\bZ \times \Sigma; L)$ such that $\frac{\partial}{\partial z}|_{z = 0} H_z = h$. It is straightforward to choose $h$ so that $$(V_h(x_d), V_h(x_1), \ldots, V_h(x_{d-1})) \in \bigoplus_{i=1}^d T_{x_i}L$$ is any tuple of tangent vectors. For a given tuple $(H, \tau, S)$, we can choose a family $\{H_z\}$ which varies the time-one map of the Hamiltonian $H$ in the neighborhood of a point $x_i \in S$ by a small hyperbolic map fixing $x_i$, and leaving $L$ invariant. This produces $h$ so that $V_h(x_i) \neq 0$ and $V_h(x_j) = 0$ for all $j \neq i$. A similar construction is written out in more detail in the proof of Lemma~\ref{lem:generic_nondegeneracy_inside_l3} below.  

It follows that the map $\Psi$ is transverse to the diagonal $Z$. The implicit function theorem for Banach manifolds implies that $\mathcal{N}^{l,d}$ is a Banach submanifold of class $C^{l-1}$ and codimension $d$ in $C^l([0,1]^N \times \bR/\bZ \times \Sigma; L) \times [0,1]^N \times (L^d \setminus \Delta^d)$. The projection
$$C^l([0,1]^N \times \bR/\bZ \times \Sigma; L) \times [0,1]^N \times (L^d \setminus \Delta^d) \to C^l([0,1]^N \times \bR/\bZ \times \Sigma; L)$$
is Fredholm and $C^l$ of index $N + d$, from which we conclude that the projection
$$\mathcal{N}^{l,d} \to C^l([0,1]^N \times \bR/\bZ \times \Sigma; L)$$
is Fredholm and $C^{l-1}$ of index $N$. 

\end{proof}

\noindent\textbf{Step $2$:} Define $\mathcal{N}_{\text{bad}}^l$ to be the set of points $(H, \tau, S) \in \mathcal{N}^l$ for which $S$ or one of its iterates is not nondegenerate. We denote by $\mathcal{N}^{l,d}_{\text{bad}}$ the connected component of $\mathcal{N}^{l}_{\text{bad}}$ consisting of tuples $(H, \tau, S)$ with $|S| = d$. This step proves the following lemma. 

\begin{lem}\label{lem:generic_nondegeneracy_inside_l3}
For any $l \geq 3$ and $d \geq 1$, the space $\mathcal{N}^{l,d}_{\text{bad}}$ is a countable union of $C^{l - 2}$ Banach submanifolds of $\mathcal{N}^{l,d}$ of codimension at least $1$. 
\end{lem}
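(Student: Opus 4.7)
The plan is to exhibit $\mathcal{N}^{l,d}_{\text{bad}}$ as the union of level sets of a single real-valued smooth function on $\mathcal{N}^{l,d}$ and then apply the Banach implicit function theorem. Fix $(H, \tau, S = \{x_1, \ldots, x_d\}) \in \mathcal{N}^{l,d}$. Because $\phi^{H^\tau}$ preserves $L$, the Poincar\'e return map $P := D(\phi^{H^\tau})^d(x_1)$ preserves the one-dimensional subspace $T_{x_1}L$; since $\phi^{H^\tau}$ is area-preserving, $P$ is upper triangular in any basis extending a tangent vector to $L$, with eigenvalues $\mu$ and $\mu^{-1}$, where $\mu := P|_{T_{x_1}L} \in \bR \setminus \{0\}$. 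The $k$-th iterate of $S$ fails to be nondegenerate iff $P^k$ has $1$ as an eigenvalue iff $\mu^k = 1$, which for real $\mu$ occurs for some $k \geq 1$ iff $\mu \in \{1, -1\}$. Thus $\mathcal{N}^{l,d}_{\text{bad}} = \mu^{-1}(1) \cup \mu^{-1}(-1)$, and $\mu$ is a $C^{l-1}$ function on $\mathcal{N}^{l,d}$ because $D\psi^1_{(-)^\tau}$ depends $C^{l-1}$ on $(H, \tau)$.

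The main step is to prove that $\mu$ is a submersion, by constructing for any $(H, \tau, S) \in \mathcal{N}^{l,d}$ a variation $h \in C^l([0,1]^N \times \bR/\bZ \times \Sigma; L)$ along which $(H + zh, \tau, S)$ remains in $\mathcal{N}^{l,d}$ for small $z$ and $\frac{d}{dz}\big|_{z=0}\mu(H + zh, \tau, S) \neq 0$. Choose any $x_i \in S$ and $t_0 \in (0, 1)$, and set $\gamma(t) := \psi^t_{H^\tau}(x_i) \in L$. Since $S$ is simple, the spacetime trajectories $\{(t, \psi^t_{H^\tau}(x_j))\}$ for $j = 1, \ldots, d$ are pairwise disjoint, so a sufficiently small neighborhood of $(t_0, \gamma(t_0))$ meets only $\gamma$. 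Work in local coordinates $(u, v)$ near $\gamma(t_0)$ with $L = \{v = 0\}$, $\gamma(t_0) = (0,0)$, and $\Omega = du \wedge dv$, so that $\gamma(t) = (u(t), 0)$. Define
\[ h(\sigma, t, u, v) := \chi(t)\,\rho(u, v)\,(u - u(t))\,v, \]
with $\chi, \rho$ nonnegative bump functions supported in a narrow time interval around $t_0$ and a small spatial neighborhood of $(0, 0)$; extend by zero. Then $h$ is independent of $\sigma$, vanishes on $L$, and lies in $C^l([0,1]^N \times \bR/\bZ \times \Sigma; L)$.

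The factor $(u - u(t))$ ensures that $h$ vanishes to first order along the spacetime curve $\{(t, \gamma(t))\}$; a direct computation with $\iota_{X_h}\Omega = dh$ then shows that $X_{h^\tau_t}$ vanishes identically on this curve. Hence $\gamma$ is simultaneously an integral curve of $X_{H^\tau_t}$ and of $X_{(H + zh)^\tau_t}$, so $\psi^1_{(H + zh)^\tau}(x_i) = \psi^1_{H^\tau}(x_i)$, and $(H + zh, \tau, S) \in \mathcal{N}^{l,d}$ for all $z$ small. The same computation shows that the restriction of $DX_{h^\tau_t}$ to $TL$ along $\gamma$ equals the nonnegative scalar $\chi(t)\,\rho(u(t), 0)$, which has positive integral over $[0, 1]$. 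A variation-of-parameters calculation for the linearized flow ODE then gives
\[ \frac{d}{dz}\bigg|_{z = 0} \mu(H + zh, \tau, S) = \mu \cdot \int_0^1 \chi(t)\,\rho(u(t), 0)\, dt \neq 0. \]
The implicit function theorem then yields that $\mu^{-1}(1)$ and $\mu^{-1}(-1)$ are $C^{l-1}$ (hence $C^{l-2}$) Banach submanifolds of $\mathcal{N}^{l,d}$ of codimension $1$, whose union equals $\mathcal{N}^{l,d}_{\text{bad}}$.

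The main obstacle is designing $h$ so that three requirements hold simultaneously: $h$ is locally constant on $L$; the orbit $S$ is preserved under the perturbation (so the computation is intrinsic to $\mathcal{N}^{l,d}$); and $\mu$ actually varies. Requiring $X_h$ to vanish along the \emph{entire} spacetime trajectory of $x_i$, and not merely at a single point, is what makes these three conditions compatible, and is precisely what the factor $(u - u(t))$ enforces.
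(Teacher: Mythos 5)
Your proof is correct and takes essentially the same route as the paper. Both proofs hinge on the observation that the return map preserves the line $T_{x_1}L$ and hence, by area-preservation, is upper-triangular with real eigenvalues $\mu, \mu^{-1}$, so the bad locus is exactly $\mu^{-1}(\{\pm 1\})$; you realize this as a scalar function $\mu$ directly on $\mathcal{N}^{l,d}$, while the paper packages it as a determinant map $\Psi$ composed with a section $\alpha$ of a line-automorphism bundle, which is the same thing. The only genuine difference is in the explicit perturbation used to prove $\mu$ is a submersion: the paper uses Weinstein coordinates along the entire boundary loop $\gamma$ and an autonomous hyperbolic Hamiltonian $G(t,s) = -st$ supported near $x_1$, composed into the flow; you instead work in a Darboux chart near a single point $\gamma(t_0)$ on $L$, use a time-dependent Hamiltonian $h = \chi(t)\rho(u,v)(u-u(t))v$ supported in a small spacetime box around $(t_0,\gamma(t_0))$, and carry out a Duhamel computation for $\frac{d\mu}{dz}$. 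Your construction, with the factor $(u - u(t))$ forcing $X_h$ to vanish along the spacetime orbit, is a perfectly standard alternative and is arguably a touch more self-contained, since it avoids invoking the tubular-neighborhood theorem for the whole loop and works entirely locally; the paper's construction is slightly simpler to compute with because the perturbation is autonomous.
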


\begin{proof}[Proof of Lemma~\ref{lem:generic_nondegeneracy_inside_l3}]

We define a universal fiber bundle $\mathcal{E} \to \mathcal{N}^{l,d}$ of class $C^{l-1}$ by setting its fiber at $(H, \tau, S = \{x_1, \ldots, x_d\})$ to equal the group of linear automorphisms of $T_{x_1}L$, which is naturally isomorphic to the multiplicative group $\mathbb{R}^*$ of nonzero real numbers. There exists a natural $C^{l-1}$ section $\alpha: \mathcal{N}^{l,d} \to \mathcal{E}$ taking $(H, \tau, S)$ to the restriction of the return map to $T_{x_1} L$.  

It is convenient to trivialize $\mathcal{E}$. Define a $C^{l-1}$-smooth map
$$\Psi: \mathcal{E} \to \bR^*$$
sending a tuple $(H, \tau, S = \{x_1, \ldots, x_d\}, \rho)$, where $\rho$ is a linear automorphism of $T_{x_1}L$, to its determinant $\lambda \in \mathbb{R}_*$. For a given tuple $(H, \tau, S)$, $S$ and all its iterates are nondegenerate if and only if $(\Phi \circ \alpha)(H, \tau, S) \not\in \{-1, 1\}$. This is because $L$ is $\phi^{H^\tau}$-invariant, so the tangent space $T_{x_1} L$ is invariant under the return map of $S$, which implies that an iterate of $S$ is degenerate if and only if the return map acts by multiplication by $\pm 1$ on $T_{x_1} L$. The lemma then follows from showing that $1$ and $-1$ are regular values of $\Psi \circ \alpha$ and then applying the implicit function theorem. 

The fact that $1$ and $-1$ are regular values of $\Psi \circ \alpha$ follows from the fact that the linearization $D(\Psi \circ \alpha)$ does not vanish at any point, which we now prove. Fix any $(H, \tau, S = \{x_1, \ldots, x_d\}) \in \mathcal{N}^{l,d}$. Write $\gamma$ for the component of $L$ containing $x_1$. The Weinstein tubular neighborhood theorem shows that for some small $\delta > 0$, there is a smooth embedding 
$$F: (\bR/\bZ)_t \times (-\delta, \delta)_s \to \Sigma$$
satisfying the following properties:
\begin{itemize}
\item $F(0, 0) = x_1$ and the map $t \mapsto F(t, 0)$ parameterizes $\gamma$. 
\item $F$ is a diffeomorphism onto an open neighborhood of $\gamma$ which is disjoint from any other component of $L$. 
\item $F^*\Omega = ds \wedge dt.$
\end{itemize}

Since $\phi^{\tau}$ preserves $L$, there is some positive $\delta' \ll \delta$ such that the conjugated map
$$F^{-1} \circ (\phi^{\tau})^d \circ F: \bR/\bZ \times (-\delta', \delta') \to \bR/\bZ \times (-\delta, \delta)$$
is a well-defined smooth embedding preserving the area form $ds \wedge dt$. Define an autonomous Hamiltonian 
$$G: \bR/\bZ \times (-\delta', \delta') \to \bR,$$
compactly supported in a small neighborhood of $(0, 0)$, which satisfies the formula $G(t, s) = -st$ sufficiently close to $(0, 0)$. Note that the function $G \circ F^{-1}$ extends by zero to a well-defined autonomous Hamiltonian on $\Sigma$ which vanishes on $L$. For any $z \in (-1, 1)$, write $\psi_z$ for the time one map of $2zG$. If the support of $G$ is sufficiently close to $(0, 0)$, we conclude that for any $z \in (-1, 1)$, the Hamiltonian diffeomorphism $\phi^\tau \circ \psi_z$ has $S$ as a simple periodic orbit and the return map at $x_1 \in L$ has determinant $\lambda \cdot \text{exp}(z)$ on $T_{x_1}L$, where $\lambda = (\Psi \circ \alpha)(H, \tau, S)$. We fix a family of Hamiltonians $\{H_z\}_{z \in (-1, 1)}$ in $C^l([0,1]^N \times \bR/\bZ \times \Sigma; L)$ so that the time one map of $H_z^\tau$ equals $\phi^\tau \circ \psi_z$ for every $z \in (-1, 1)$. Then 
$$\frac{\partial}{\partial z}|_{z = 0} (\Psi \circ \alpha)(H_z, \tau, S) = \lambda \neq 0$$
so the linearization $D(\Psi \circ \alpha)$ does not vanish at any point in the domain as desired. This concludes the proof of the lemma. 
\end{proof}

\noindent\textbf{Step $3$:} This step proves the following lemma.

\begin{lem}\label{lem:generic_nondegeneracy_inside_l1}
For any $l \geq 3$, there exists a generic subset of $H \in C^l([0,1]^N \times \bR/\bZ \times \Sigma; L)$ such that 
$$\text{measure}(\{\tau \in [0,1]^N\,|\,\text{Any periodic orbit of $\phi^\tau$ in $L$ is nondegenerate}\}) = 1.$$
\end{lem}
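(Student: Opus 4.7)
The plan is to apply the Sard--Smale transversality theorem to the restriction of the projection $\pi \colon \mathcal{N}^{l,d} \to C^l([0,1]^N \times \bR/\bZ \times \Sigma; L)$ to each of the Banach submanifolds provided by Lemma~\ref{lem:generic_nondegeneracy_inside_l3}. Write $\mathcal{N}^{l,d}_{\text{bad}} = \bigcup_{k} \mathcal{N}^{l,d}_{\text{bad},k}$, where each stratum $\mathcal{N}^{l,d}_{\text{bad},k}$ is a $C^{l-2}$ Banach submanifold of codimension at least $1$ in $\mathcal{N}^{l,d}$. Combined with Lemma~\ref{lem:generic_nondegeneracy_inside_l2}, the restricted projection $\pi_{d,k}$ is $C^{l-2}$ Fredholm of index at most $N-1$. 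Assuming $l$ is large enough that $l-2 > \max(0, N-1)$, the Sard--Smale theorem yields a residual set $R_{d,k} \subset C^l([0,1]^N \times \bR/\bZ \times \Sigma; L)$ of regular values.

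For every $H \in R_{d,k}$, the preimage $\pi_{d,k}^{-1}(H)$ is either empty or a $C^{l-2}$ manifold of dimension at most $N-1$. Projecting this preimage to $[0,1]^N$ via $(H,\tau,S) \mapsto \tau$ gives a locally Lipschitz image of an at most $(N-1)$-dimensional manifold, which consequently has Lebesgue measure zero in $[0,1]^N$. Intersecting $R := \bigcap_{d \geq 1}\bigcap_{k} R_{d,k}$ over the countable indexing set keeps $R$ residual in $C^l$. For $H \in R$, the set of $\tau \in [0,1]^N$ for which $\phi^{H^\tau}$ has some degenerate periodic orbit contained in $L$ is the countable union, over $d$ and $k$, of these measure-zero projections, since every such orbit is an iterate of a simple periodic orbit of some period $d$ and is detected in the corresponding $\mathcal{N}^{l,d}_{\text{bad},k}$ by the ``$S$ or one of its iterates'' clause in the definition of $\mathcal{N}^{l,d}_{\text{bad}}$. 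Hence this set has measure zero, proving the lemma.

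The main obstacle is the regularity mismatch between the Fredholm smoothness $l-2$ and the Sard--Smale threshold $\max(0,N-1)$: for $l \leq N+1$ the direct argument above does not apply. To cover the full range $l \geq 3$, one first runs the argument for some $l' \geq N+2$ to obtain a residual set $R^{l'} \subset C^{l'}$, and then descends to $C^l$ by exploiting that $C^{l'}$ is dense in $C^l$ and that the ``degeneracy locus in $\tau$ has positive measure'' property is, after a routine Fubini-plus-equicontinuity argument using compactness of periodic orbits of bounded period, an $F_\sigma$-condition on $H$ in the $C^l$ topology; the complement is therefore $G_\delta$ and dense, hence residual.
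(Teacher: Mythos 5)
Your argument follows essentially the same route as the paper's proof: apply Sard--Smale to the universal moduli space projections, conclude that for a generic $H$ the bad strata cut out fibers of dimension at most $N-1$, note that these project to measure-zero subsets of $[0,1]^N$, and take a countable intersection over $d$ (and $k$). Two cosmetic differences: you apply Sard--Smale directly to the restricted projections $\pi_{d,k}\colon \mathcal{N}^{l,d}_{\text{bad},k} \to C^l$ (Fredholm index $\leq N-1$), while the paper applies it first to the full projection $\Pi\colon \mathcal{N}^{l,d}\to C^l$ of index $N$ and then intersects the fiber $\Pi^{-1}(H)$ with the bad locus; and to see the projection of the $\leq(N-1)$-dimensional bad fiber to $[0,1]^N$ is null, you use the elementary ``Lipschitz image of a lower-dimensional manifold is null'' fact, whereas the paper phrases the same conclusion through Sard's theorem on the finite-dimensional fiber. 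Both are fine.

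The more substantive point you raise is the Sard--Smale regularity threshold: since the maps are only $C^{l-2}$ Fredholm of index up to $N-1$, the theorem nominally requires $l \geq N+2$, which is stronger than the stated hypothesis $l \geq 3$ when $N \geq 2$. This is a legitimate observation, and the paper's own proof has exactly the same implicit requirement without acknowledging it. Your proposed fix — proving the lemma for $l'\geq N+2$ and descending to smaller $l$ by an $F_\sigma$/Baire argument — is workable in spirit: the set $F_{m,d}$ of $H$ for which $\tau$'s with a degenerate orbit of period $\leq d$ in $L$ have measure at least $1/m$ is indeed closed in $C^l$ (the degenerate locus is closed in $(H,\tau,x)$ by compactness of $L$, and reverse Fatou upgrades this to closedness of $F_{m,d}$), so the bad locus is $F_\sigma$ and its complement is dense $G_\delta$ by density of the $C^{l'}$-good set. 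But you should spell this out rather than label it a ``routine Fubini-plus-equicontinuity argument''; in particular, Fubini plays no obvious role, and the key point is compactness plus reverse Fatou. In any case, the downstream use of this lemma (Step~4) only needs it for a single sufficiently large $l$, so the extra care, while commendable, is not strictly necessary for the theorems of the paper.
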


\begin{proof}[Proof of Lemma~\ref{lem:generic_nondegeneracy_inside_l1}]
Introduce the notation 
$$\Pi: \mathcal{N}^{l,d} \to C^l([0,1]^N \times \bR/\bZ \times \Sigma; L)$$
for the projection. Lemmas~\ref{lem:generic_nondegeneracy_inside_l2} and \ref{lem:generic_nondegeneracy_inside_l3}, along with the Sard--Smale theorem implies that there is a generic set $E_d \subset C^l([0,1]^N \times \bR/\bZ \times \Sigma; L)$ satisfying the following properties:
\begin{itemize}
    \item For any $H \in E_d$, the preimage $\Pi^{-1}(H)$ is a manifold of class $C^{l-1}$ with dimension $N$. 
    \item For any $H$ in $E_d$, the preimage $\Pi^{-1}(H) \cap \mathcal{N}^{l,d}_{\text{bad}}$ is a countable union of submanifolds of class $C^{l-2}$ of $\Pi^{-1}(H)$ with codimension at least $1$. 
\end{itemize}

Fix any $H \in E_d$. Sard's theorem implies that there is a full measure set of $\tau \in [0,1]$ which are simultaneously regular values of the projections $\Pi^{-1}(H) \to [0,1]^N$ and $\Pi^{-1}(H) \cap \mathcal{N}^{l,d}_{\text{bad}} \to [0,1]^N$. For each regular value $\tau$, the preimage of $\tau$ under this projection does not intersect $\Pi^{-1}(H) \cap \mathcal{N}^{l,d}_{\text{bad}}$ since it is a union of submanifolds of dimension $\leq N - 1$. It follows by definition that, for any $H \in E_d$, any orbit of $\phi^\tau$ lying in $L$ is nondegenerate for a full measure set of $\tau \in [0,1]^N$. Taking the intersection of the sets $E_d$ across all $d$ yields the generic set desired by Lemma~\ref{lem:generic_nondegeneracy_inside_l1}. 
\end{proof}

\noindent\textbf{Step $4$:} This step concludes the proof by combining Lemma~\ref{lem:generic_nondegeneracy_inside_l1} with a quick formal argument. It is identical to the argument proving \cite[Lemma~$3.2$]{equidistribution} from the counterpart of Lemma~\ref{lem:generic_nondegeneracy_inside_l1} in that work. For any integer $d \geq 1$ and $\delta > 0$, write $E(d, \delta)$ for the set of Hamiltonians in $C^\infty([0,1]^N \times \bR/\bZ \times \Sigma; L)$ such that, for any $H \in E(d, \delta)$, 
\begin{equation} \label{eq:parametric1} \text{measure}(\{\tau \in [0,1]^N\,|\,\text{Any orbit of $\phi^\tau$ in $L$ of period $\leq d$ is nondegenerate}\}) > 1 - \delta. \end{equation}

We claim that $E(d, \delta)$ is open and dense in $C^\infty([0,1]^N \times \bR/\bZ \times \Sigma; L)$. It is clearly open. For any $l \geq 3$, the fact that $C^l$ functions may be approximated in the $C^l$ topology by smooth functions and Lemma~\ref{lem:generic_nondegeneracy_inside_l1} imply that $E(d,\delta)$ is dense in $C^l([0,1]^N \times \bR/\bZ \times \Sigma; L)$. It follows that it is dense in $C^\infty([0,1]^N \times \bR/\bZ \times \Sigma; L)$ as desired. Now fix a sequence $d_N \to \infty$ and a sequence $\delta_N \to 0$. Then the set
$$E := \bigcap_{N \geq 1} E(d_N, \delta_N)$$
is the generic set desired by Lemma~\ref{lem:generic_nondegeneracy_inside_l}. 

\subsubsection{Proof of Lemma~\ref{lem:generic_nondegeneracy_outside_l}} Fix $N \geq 1$ and $\phi \in \Diff(\Sigma, \Omega; L)$ as in the statement of the lemma. For any $l \geq 0$, write $C^l([0,1]^N \times \bR/\bZ \times \Sigma; L)$ for the Banach space of functions of class $C^l$ on $[0,1]^N \times \bR/\bZ \times \Sigma$ which are locally constant on $L$. The proof of Lemma~\ref{lem:generic_nondegeneracy_outside_l} is a minor modification of the proof of \cite[Lemma~$3.2$]{equidistribution}. It is a consequence of the following three lemmas. 

\begin{lem}\label{lem:generic_nondegeneracy_outside_l1}
For any $l \geq 3$, there is a generic set of $H \in C^l([0,1]^N \times \bR/\bZ \times \Sigma; L)$ such that 
$$\text{measure}(\{\tau \in [0,1]^N\,|\,\text{Any periodic orbit of $\phi^\tau$ in $\Sigma\setminus L$ are nondegenerate}\}) = 1.$$
\end{lem}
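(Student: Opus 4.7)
The plan is to follow the four-step blueprint used in the proof of Lemma~\ref{lem:generic_nondegeneracy_inside_l}, adapted from periodic orbits inside $L$ to periodic orbits in $\Sigma \setminus L$. Fix $l \geq 3$. For each $d \geq 1$ I would first introduce a universal moduli space $\mathcal{M}^{l,d}$ of triples $(H, \tau, S)$ where $H \in C^l([0,1]^N \times \bR/\bZ \times \Sigma; L)$, $\tau \in [0,1]^N$, and $S = \{x_1, \ldots, x_d\}$ is a simple period-$d$ orbit of $\phi^{H^\tau}$ lying entirely in the open set $\Sigma \setminus L$. In direct analogy with Lemma~\ref{lem:generic_nondegeneracy_inside_l2}, I realize $\mathcal{M}^{l,d}$ as the preimage of the diagonal $Z \subset ((\Sigma \setminus L)^d)^2$ under the natural map $\Psi$ sending $(H,\tau,S)$ to the pair $((x_1,\ldots,x_d),(\phi^{H^\tau}(x_d), \phi^{H^\tau}(x_1),\ldots,\phi^{H^\tau}(x_{d-1})))$. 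Transversality of $\Psi$ to $Z$ is actually easier to verify here than in the inside-$L$ case: since every $x_i$ lies in the open set $\Sigma \setminus L$, any Hamiltonian variation supported in a small neighborhood of $x_i$ disjoint from $L$ is automatically locally constant on $L$ and hence lies in $C^l([0,1]^N \times \bR/\bZ \times \Sigma; L)$. Such variations suffice to move any one orbit point independently in $T_{x_i}\Sigma$, so the implicit function theorem gives $\mathcal{M}^{l,d}$ the structure of a $C^{l-1}$ Banach submanifold with projection $\Pi: \mathcal{M}^{l,d} \to C^l([0,1]^N \times \bR/\bZ \times \Sigma; L)$ a $C^{l-1}$ Fredholm map of index $N$.

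Next, I define the bad subset $\mathcal{M}^{l,d}_{\mathrm{bad}} \subset \mathcal{M}^{l,d}$ consisting of those triples for which $S$ or some iterate of $S$ is degenerate, and decompose it as a countable union $\bigcup_{k \geq 1} \mathcal{M}^{l,d}_{\mathrm{bad},k}$ where $\mathcal{M}^{l,d}_{\mathrm{bad},k}$ is the zero locus of the $C^{l-1}$ function $F_k(H,\tau,S) := \det\bigl(D(\phi^{H^\tau})^{dk}(x_1) - \Id\bigr)$. To show that $0$ is a regular value of each $F_k$ on $\mathcal{M}^{l,d}_{\mathrm{bad},k}$, I would construct — just as in Lemma~\ref{lem:generic_nondegeneracy_inside_l3} — a one-parameter family of compactly supported Hamiltonian perturbations localized near $x_1$ in $\Sigma \setminus L$, for instance a family of small linear symplectic shears in a Darboux chart, that moves the return map $D(\phi^{H^\tau})^{d}(x_1) \in \mathrm{Sp}(2,\bR)$ in a direction transverse to the codimension-$1$ locus $\{\lambda^k = 1\}$. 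Since the orbit is disjoint from $L$, such compactly supported perturbations automatically lie in $C^l([0,1]^N \times \bR/\bZ \times \Sigma; L)$, so $DF_k$ is nonzero along $\mathcal{M}^{l,d}_{\mathrm{bad},k}$ and the latter is a $C^{l-2}$ submanifold of codimension at least one.

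The remaining two steps are formal and identical to Steps~$3$ and $4$ in the proof of Lemma~\ref{lem:generic_nondegeneracy_inside_l}. The Sard--Smale theorem applied to $\Pi$, followed by intersecting over $d \geq 1$, produces a generic set $E \subset C^l([0,1]^N \times \bR/\bZ \times \Sigma; L)$ such that for $H \in E$ every fiber $\Pi^{-1}(H)$ is a $C^{l-1}$ manifold of dimension $N$ whose intersection with the bad locus is a countable union of $C^{l-2}$ submanifolds of positive codimension. Sard's theorem applied to the projection $\Pi^{-1}(H) \to [0,1]^N$ then guarantees that a full-measure set of $\tau$ avoids the bad locus, proving the $C^l$ analogue of the lemma. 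Approximating $C^\infty$ functions by $C^l$ ones and taking a countable intersection of ``good'' sets indexed by a sequence $(d_N, \delta_N)$ with $d_N \to \infty$ and $\delta_N \to 0$ upgrades the conclusion to $C^\infty$-generic as desired.

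The main obstacle I anticipate is the regular-value verification for $F_k$ at iterates of high order: one must produce, for each $k$, a Hamiltonian perturbation that moves the linearized Poincar\'e map in a prescribed direction inside $\mathrm{Sp}(2,\bR)$ while preserving the period-$d$ orbit structure of $S$. However, because the orbit points lie in the open set $\Sigma \setminus L$, the constraint of being locally constant on $L$ is invisible for perturbations supported near the orbit, so the standard closed-surface perturbation constructions go through verbatim. This is precisely why the paper describes the proof as a ``minor modification'' of its closed-surface counterpart.
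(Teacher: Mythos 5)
Your overall blueprint matches the paper's: Lemma~\ref{lem:generic_nondegeneracy_outside_l1} is indeed proved by the Sard--Smale argument you describe, applied to the universal moduli space of Lemma~\ref{lem:generic_nondegeneracy_outside_l2} and the bad-locus decomposition of Lemma~\ref{lem:generic_nondegeneracy_outside_l3}, and you correctly identify the one genuinely new observation — that a Hamiltonian perturbation supported near a point of $\Sigma \setminus L$ automatically lies in $C^l([0,1]^N \times \bR/\bZ \times \Sigma; L)$, so the constraint of being locally constant on $L$ costs nothing for orbits disjoint from $L$. That is precisely what the paper stresses when it says the three lemmas are proved by repeating the proofs from \cite{equidistribution} with this modification.

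There is, however, a genuine gap in your bad-locus decomposition. You claim that $\mathcal{M}^{l,d}_{\mathrm{bad},k}$ is the zero set of $F_k(H,\tau,S) := \det\bigl(D(\phi^{H^\tau})^{dk}(x_1) - \Id\bigr)$ and that $0$ is a regular value of $F_k$. This is false. Write $A := D(\phi^{H^\tau})^{d}(x_1) \in \mathrm{Sp}(2,\bR)$. The map $M \mapsto \det M$ on $2\times 2$ matrices is a quadratic form, so its differential at $M=0$ vanishes; by the chain rule, $DF_k$ vanishes identically along $\{A^k = \Id\}$, which is a nonempty part of $\{F_k=0\}$ (it contains $A=\Id$ and all elliptic $A$ whose rotation number is a multiple of $1/k$). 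In fact near $A=\Id$ one computes $F_k(A) = k^2\det(A-\Id) + O(|A-\Id|^3)$, whose zero set is the quadric cone $\{a^2+bc=0\}$ in the traceless coordinates $A-\Id \approx \bigl(\begin{smallmatrix} a & b \\ c & -a\end{smallmatrix}\bigr)$, not a hypersurface. Crucially, no choice of Hamiltonian perturbation can fix this, because the degeneracy is intrinsic to $F_k$ and not to the map $(H,\tau,S)\mapsto A$; your anticipated ``main obstacle'' is therefore misdiagnosed.

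The correct argument replaces the scalar function $F_k$ with a stratification of the target. One shows — and this is where your localized Darboux-chart perturbations genuinely enter — that the map $(H,\tau,S)\mapsto D(\phi^{H^\tau})^{d}(x_1)\in\mathrm{Sp}(2,\bR)$, suitably trivialized, is a submersion, and then pulls back the decomposition of the bad set in $\mathrm{Sp}(2,\bR)$ into conjugacy-type strata: the points $\pm\Id$ (codimension $3$), the parabolic strata with eigenvalue $\pm 1$ (codimension $1$), and for each root of unity the corresponding elliptic conjugacy class (codimension $1$). A submersion pulls each stratum back to a Banach submanifold of the same codimension, giving the countable union of positive-codimension submanifolds that Lemma~\ref{lem:generic_nondegeneracy_outside_l3} asserts, and the remainder of your Sard--Smale/Sard argument then runs as stated.

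Finally, a minor point of scope: Lemma~\ref{lem:generic_nondegeneracy_outside_l1} is a statement for a single fixed $l$, so the $C^\infty$-approximation and countable-intersection step at the end of your proposal belongs to the deduction of Lemma~\ref{lem:generic_nondegeneracy_outside_l} from Lemma~\ref{lem:generic_nondegeneracy_outside_l1} rather than to the lemma at hand.
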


For any $l \geq 3$, define $\mathcal{M}^l$ to be the space of tuples $(H, \tau, S)$ where $H \in C^l([0,1]^N \times \bR/\bZ \times \Sigma; L)$, $\tau \in [0,1]^N$, and $S$ is a simple periodic orbit of $\phi^\tau$ which does not lie in $L$. The space of such simple periodic orbits is topologized as a subset of the disjoint union $\bigsqcup_{d \geq 1} (\Sigma \setminus L)^d$ of products of $\Sigma \setminus L$, which gives $\mathcal{M}^l$ a natural topology. For any $d \geq 1$, we denote by $\mathcal{M}^{l,d}$ the connected component of $\mathcal{M}^l$ consisting of tuples $(H, \tau, S)$ with $|S| = d$. 

\begin{lem} \label{lem:generic_nondegeneracy_outside_l2}
For any $l \geq 3$ and $d \geq 1$, the space $\mathcal{M}^{l,d}$ has the structure of a Banach manifold of class $C^{l - 1}$ such that the projection
$$\mathcal{M}^{l,d} \to C^l([0,1]^N \times \bR/\bZ \times \Sigma; L)$$
is a $C^{l-1}$ Fredholm map of index $N$. 
\end{lem}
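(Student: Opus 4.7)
The plan is to mirror the proof of Lemma~\ref{lem:generic_nondegeneracy_inside_l2} essentially verbatim, with the key difference arising at the transversality step. I would define the map
$$\Psi: C^l([0,1]^N \times \bR/\bZ \times \Sigma; L) \times [0,1]^N \times ((\Sigma \setminus L)^d \setminus \Delta^d) \to ((\Sigma \setminus L)^d)^2$$
by sending $(H, \tau, S = \{x_1, \ldots, x_d\})$ to the pair
$$((x_1, \ldots, x_d),(\phi^{H^\tau}(x_d), \phi^{H^\tau}(x_1), \ldots, \phi^{H^\tau}(x_{d-1}))),$$
so that $\mathcal{M}^{l,d} = \Psi^{-1}(Z)$ for the diagonal $Z \subset ((\Sigma \setminus L)^d)^2$. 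The goal is then to check that $\Psi$ is transverse to $Z$ and apply the Banach manifold implicit function theorem.

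The main content, and the step I expect to be the actual obstacle, is to show that the linearization $D\Psi$ in the $H$-direction surjects onto $\bigoplus_{i=1}^d T_{x_i}\Sigma$ at each point of $\mathcal{M}^{l,d}$. In the inside-$L$ case one only had to hit the $d$-dimensional subspace $\bigoplus T_{x_i}L$, and the constraint that Hamiltonians be locally constant on $L$ was a genuine restriction. Here, by contrast, the fact that each $x_i$ lies in the open set $\Sigma \setminus L$ means we can choose a small open neighborhood $U_i$ of $x_i$, pairwise disjoint and disjoint from $L$, and use compactly supported bump Hamiltonians in $U_i$; such Hamiltonians are automatically equal to $0$ on a neighborhood of $L$ and thus lie in $C^l([0,1]^N \times \bR/\bZ \times \Sigma; L)$. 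Choosing a family $\{H_z\}$ whose time-one Hamiltonian flow translates a small neighborhood of one chosen $x_i$ in an arbitrary direction in $T_{x_i}\Sigma$ while leaving the other $x_j$ fixed produces a variation $h$ with $V_h(x_i)$ any prescribed vector in $T_{x_i}\Sigma$ and $V_h(x_j) = 0$ for $j \neq i$, exactly as in Step~1 of the inside-$L$ argument, but with the full two-dimensional freedom rather than just tangential to $L$.

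Granted this transversality, the implicit function theorem for Banach manifolds gives $\mathcal{M}^{l,d}$ the structure of a $C^{l-1}$ Banach submanifold of codimension $2d$ inside $C^l([0,1]^N \times \bR/\bZ \times \Sigma; L) \times [0,1]^N \times ((\Sigma \setminus L)^d \setminus \Delta^d)$. The ambient projection onto $C^l([0,1]^N \times \bR/\bZ \times \Sigma; L)$ is Fredholm of index $N + 2d$, since the fiber $[0,1]^N \times ((\Sigma \setminus L)^d \setminus \Delta^d)$ is finite-dimensional of dimension $N + 2d$. Restricting this projection to the codimension-$2d$ submanifold $\mathcal{M}^{l,d}$ drops the index by $2d$, yielding a $C^{l-1}$ Fredholm map of index $N$, as desired. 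No new tools beyond those already developed for Lemma~\ref{lem:generic_nondegeneracy_inside_l2} are needed, so the proof will essentially consist of writing out the above observations and pointing to the earlier argument for the details of the linearization computation.
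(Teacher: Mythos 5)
Your proposal is correct and follows the same route the paper intends: it mirrors the inside-$L$ proof (Lemma~\ref{lem:generic_nondegeneracy_inside_l2}) with the transversality step supplied by compactly supported bump Hamiltonians near the $x_i \in \Sigma \setminus L$, which vanish near $L$ and hence lie in $C^l([0,1]^N \times \bR/\bZ \times \Sigma; L)$ while providing the full two-dimensional freedom in each $T_{x_i}\Sigma$. This is exactly the observation the paper makes — ``locally at any point which does not intersect $L$, an arbitrary Hamiltonian perturbation can be replaced by a Hamiltonian perturbation which is locally constant on $L$'' — and your codimension/index bookkeeping (codimension $2d$, ambient index $N+2d$, restricted index $N$) is the correct two-dimensional analogue of the codimension-$d$ computation in the inside-$L$ case.
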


Define $\mathcal{M}_{\text{bad}}^l$ to be the set of points $(H, \tau, S) \in \mathcal{M}^l$ for which the return map of $\phi^{H^\tau}$ at $S$ has a root of unity as an eigenvalue. For any $d \geq 1$, we denote by $\mathcal{M}^{l,d}_{\text{bad}}$ the connected component of $\mathcal{M}^{l}_{\text{bad}}$ consisting of tuples $(H, \tau, S)$ with $|S| = d$. 

\begin{lem}\label{lem:generic_nondegeneracy_outside_l3}
For any $l \geq 3$ and $d \geq 1$, the space $\mathcal{M}^{l,d}_{\text{bad}}$ is a countable union of $C^{l - 2}$ Banach submanifolds of $\mathcal{M}^{l,d}$ of codimension at least $1$. 
\end{lem}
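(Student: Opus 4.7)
The plan is to follow the template of the proof of Lemma~\ref{lem:generic_nondegeneracy_inside_l3}, but with the one-dimensional fiber $\bR^*$ replaced by the three-dimensional fiber $SL(2,\bR)$, since for $x \notin L$ the tangent space $T_x\Sigma$ has no preferred line and the return map is an arbitrary linear symplectic automorphism. Concretely, I would first define a $C^{l-1}$ universal fiber bundle $\mathcal{E} \to \mathcal{M}^{l,d}$ whose fiber at $(H,\tau,S=\{x_1,\ldots,x_d\})$ is the group of $\Omega_{x_1}$-preserving linear automorphisms of $T_{x_1}\Sigma$, and let $\alpha$ be the canonical $C^{l-1}$ section sending $(H,\tau,S)$ to the return map $A := D\phi^{H^\tau,d}(x_1)$.

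Next, I would describe the bad subset of the fiber. A matrix $A \in SL(2,\bR)$ has a root-of-unity eigenvalue if and only if $\mathrm{tr}(A) \in T$, where
$$T := \{2\cos(2\pi q) : q \in \mathbb{Q}\} \subset [-2,2]$$
is a countable set. For $t \in T$ with $|t| < 2$ the level set $\{\mathrm{tr} = t\}$ is a smooth codimension-$1$ submanifold of $SL(2,\bR)$, while $\mathrm{tr}^{-1}(\pm 2) = \{\pm I\} \sqcup P_\pm$ is the disjoint union of an isolated point and the smooth codimension-$1$ submanifold $P_\pm$ of non-identity parabolics. This displays the bad set in $SL(2,\bR)$ as a countable union of smooth submanifolds of codimension at least one.

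Transferring this decomposition to $\mathcal{M}^{l,d}$ reduces to showing that, in any local trivialization of $\mathcal{E}$, the composition of $\alpha$ with projection to $SL(2,\bR)$ is a submersion; the implicit function theorem then yields the preimage of each codimension-$k$ stratum as a $C^{l-2}$ Banach submanifold of codimension $k \geq 1$, and summing over the countably many strata gives the lemma. Submersivity is proved by a two-dimensional version of the Darboux/Weinstein perturbation in the proof of Lemma~\ref{lem:generic_nondegeneracy_inside_l3}: since $S \cap L = \emptyset$, one chooses a Darboux chart on a disk $D \ni x_1$ disjoint from $L$ and from $x_2,\ldots,x_d$; then for each symmetric $2 \times 2$ matrix $M$ one constructs a compactly supported autonomous Hamiltonian $G$ on $D$ vanishing to second order at $x_1$ with Hessian $M$, extended by zero to $\Sigma$ (which automatically lies in $C^\infty(\Sigma;L)$). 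The time-one linearization of $G$ at $x_1$ equals $\exp(JM)$, and as $M$ ranges over symmetric matrices $JM$ sweeps out $\mathfrak{sl}(2,\bR)$. Inserting $G$ into an appropriate one-parameter family $\{H_z\} \subset C^l([0,1]^N \times \bR/\bZ \times \Sigma;L)$ then realizes any prescribed infinitesimal variation $A \cdot X$ of the return map, $X \in \mathfrak{sl}(2,\bR)$.

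The main obstacle is transversality at the isolated points $A = \pm I$ inside $\mathrm{tr}^{-1}(\pm 2)$: at those points the trace function itself has vanishing derivative on $\mathfrak{sl}(2,\bR)$, so the codimension-$1$ bound cannot be obtained by considering $\mathrm{tr}$ alone. One must fall back on submersivity of the full three-parameter map to $SL(2,\bR)$ to deduce that the preimages $\{A = \pm I\}$ are $C^{l-2}$ submanifolds of codimension $3$, still codimension at least one as required.
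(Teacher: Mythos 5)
Your proposal is correct and follows the same strategy the paper outsources to \cite{equidistribution}: the paper's entire ``proof'' of Lemma~\ref{lem:generic_nondegeneracy_outside_l3} is the remark that one should repeat the proof of \cite[Lemma 5.4]{equidistribution}, noting that the only new issue is whether Hamiltonians locally constant on $L$ give enough freedom to achieve transversality, and that this is automatic for orbits avoiding $L$ because perturbations can be compactly supported in a chart disjoint from $L$. You recover exactly this observation when you take the Darboux chart $D \ni x_1$ disjoint from $L$ and extend $G$ by zero, noting the extension lies in $C^\infty(\Sigma;L)$. The rest of your argument --- the universal $SL(2,\bR)$-bundle $\mathcal{E}$, the countable decomposition of the bad locus via $\{\operatorname{tr} = 2\cos(2\pi q)\}_{q\in\mathbb{Q}}$ together with the extra stratification of $\operatorname{tr}^{-1}(\pm 2)$ into $\{\pm I\}$ and the non-identity parabolics, and the computation that $M \mapsto JM$ surjects onto $\mathfrak{sl}(2,\bR)$ so the section is a fiberwise submersion --- is a faithful two-dimensional analogue of the paper's proof of Lemma~\ref{lem:generic_nondegeneracy_inside_l3} and matches what the cited lemma does. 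Your explicit handling of the degenerate trace points $\pm I$ via the full submersion to $SL(2,\bR)$ (rather than via $\operatorname{tr}$ alone) is a genuinely necessary point, which you have identified correctly; the paper leaves this to the reference.

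One minor bookkeeping remark: since $\mathcal{M}^{l,d}$ is only a $C^{l-1}$ Banach manifold (Lemma~\ref{lem:generic_nondegeneracy_outside_l2}) and the section $\alpha$, built from the linearized return map, loses an additional derivative, your stated $C^{l-2}$ regularity is the correct index and matches the lemma. You should also record that the stratification of the bad locus in $SL(2,\bR)$ is conjugation-invariant, so it is independent of the choice of local trivialization of $\mathcal{E}$ --- this is the implicit justification for ``transferring the decomposition'' via the implicit function theorem. With that noted, the argument is complete.
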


Lemmas~\ref{lem:generic_nondegeneracy_outside_l1}, \ref{lem:generic_nondegeneracy_outside_l2}, and \ref{lem:generic_nondegeneracy_outside_l3} are counterparts of Lemmas~$5.2$, $5.3$ and $5.4$ in \cite{equidistribution}, respectively, and they are proved by repeating the proofs of these lemmas. The only new difficulty in our setting is that we can only vary the map by Hamiltonian perturbations which are locally constant on $L$, which a priori may not be a large enough set of perturbations to guarantee transversality. However, locally at any point which does not intersect $L$, an arbitrary Hamiltonian perturbation can be replaced by a Hamiltonian perturbation which is locally constant on $L$. As a result, we find that the space of Hamiltonian perturbations which are locally constant in $L$ is large enough to guarantee nondegeneracy for periodic orbits which do not lie in $L$. Lemma~\ref{lem:generic_nondegeneracy_outside_l} now follows from Lemma~\ref{lem:generic_nondegeneracy_outside_l1} by using an analogous argument to the one deducing Lemma~\ref{lem:generic_nondegeneracy_inside_l} from Lemma~\ref{lem:generic_nondegeneracy_inside_l1}.

\section{Area-preserving maps of compact surfaces with boundary}

Fix for the remainder of this section a compact, smooth surface $Z$ with area form $\omega$ and boundary $L = \partial Z$. Write $A := \int_Z \omega$ for the area of $Z$.

\subsection{Capping off} \label{subsec:capping_off}

Fix any $B > A$. By attaching disks to $L$, we construct a smooth surface $\Sigma$ such that $L$ is an inessential Lagrangian inside of $\Sigma$, and an area form $\Omega$ extending $\omega$ with $\int_{\Sigma} \Omega = B$. Write $\gamma_1, \ldots, \gamma_n$ for the connected components of $L$. By the Weinstein tubular neighborhood theorem, there exists $\delta > 0$ and for each $1 \leq i \leq n$ an embedding
$$\tau_i: [0, \delta)_s \times (\bR/\bZ)_t \hookrightarrow Z$$
such that $\tau_i(0, -)$ parameterizes $\gamma_i$ and $\tau_i^*\omega = ds \wedge dt$. Set 
$$r_0 := \sqrt{\frac{B - A}{n\pi}}, \quad r_1 := \sqrt{\frac{B - A + n\delta}{n\pi}}$$
so that $\pi r_0^2 = (B - A)/n$, $\pi r_1^2 = (B - A)/n + \delta$. Let $\mathbb{A} \subset \mathbb{C}$ denote the half-closed annulus defined in polar coordinates $(r, \theta)$ by $r \in [r_0, r_1)$, and let $\mathbb{D} \subset \mathbb{C}$ denote the open disk of radius $r_1$. The map
$$\psi: (s, t) \mapsto (\sqrt{\frac{\pi r_0^2 + s}{\pi}}, 2\pi t)$$
is a symplectomorphism $([0, \delta) \times \bR/\bZ, ds \wedge dt) \to (\mathbb{A}, r dr \wedge d\theta)$. For each $i$, glue the disk $\mathbb{D}$ onto $Z$ via the embedding $\tau_i \circ \psi^{-1}: \mathbb{A} \hookrightarrow Z$. This produces a smooth surface $\Sigma$, and the area form $\omega$ extends to an area form $\Omega$ of area $B$, which is equal to $r dr \wedge d\theta$ on any copy of $\mathbb{D}$.

\subsection{An extension result} \label{subsec:extension}

The following proposition, once established, will allow us to use the results of Section~\ref{sec:maps_preserving_lagrangians} to conclude our main theorems. 

\begin{prop}\label{prop:extension}
For any map $\phi_0 \in \Diff(Z, \omega)$, there exists a map $\phi \in \Diff(\Sigma, \Omega; L)$ which restricts to $\phi_0$ on $Z$. 
\end{prop}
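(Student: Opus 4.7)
The plan is to construct $\phi$ in two stages: first a smooth (not yet area-preserving) extension $\tilde{\phi} \in \Diff(\Sigma)$ of $\phi_0$ preserving $L$, and then an area-preserving correction via Moser's trick.

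For the smooth extension, note that $\phi_0$ permutes the boundary components via some $\sigma$ with $\phi_0(\gamma_i) = \gamma_{\sigma(i)}$. In the Weinstein chart $\tau_i \colon [0,\delta) \times \bR/\bZ \to Z$ near $\gamma_i$, the map $\phi_0$ is given by smooth coordinate functions $(S(s,t), T(s,t))$ with $S(0,t) = 0$. The polar chart $\psi$ extends these coordinates through $\gamma_i$ into the adjacent capping disk: the coordinate region $s \in (-\pi r_0^2, \delta)$ covers a tubular neighborhood of $\gamma_i$ in $\Sigma$ minus the center of the adjacent disk. Applying Seeley's extension theorem to $S$ and $T$ as smooth functions on the half-space, we smoothly extend them to $s \in (-\epsilon, \delta)$, producing a smooth extension of $\phi_0$ that agrees with it to infinite order along $\gamma_i$. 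Since the Jacobian of $\phi_0$ equals $1$ along $\gamma_i$, this extension is a local diffeomorphism on a sufficiently thin two-sided collar. Patching these collar extensions across the centers of the capping disks using any smooth orientation-preserving diffeomorphism and a cutoff interpolation yields $\tilde{\phi} \in \Diff(\Sigma)$ with $\tilde{\phi}|_Z = \phi_0$ and $\tilde{\phi}(L) = L$.

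For the area correction, set $\omega_0 := \Omega$ and $\omega_1 := \tilde{\phi}^*\Omega$. These agree on $Z$ (since $\tilde{\phi}|_Z = \phi_0$ preserves $\omega$) and to infinite order on $L$ (by the Seeley construction). Because $\tilde{\phi}$ maps each capping disk diffeomorphically onto another capping disk, and all capping disks have the same area $\pi r_0^2$, the difference $\omega_1 - \omega_0$ has zero integral on each capping disk. Hence there exists a smooth primitive $\alpha$ of $\omega_1 - \omega_0$ on $\Sigma$ with $\alpha \equiv 0$ on $Z$ and vanishing to infinite order on $L$: near $\gamma_i$, in polar coordinates with $\omega_1 - \omega_0 = f \cdot r\, dr \wedge d\theta$, set $\alpha := \bigl(\int_{r_0}^r f(r',\theta) r' \, dr'\bigr) d\theta$ on a collar (this vanishes to infinite order at $r=r_0$ because $f$ does); extend to the rest of each capping disk by a cutoff and a Poincar\'e-with-support adjustment (possible because the total integral is zero); and extend by zero to $Z$. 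Moser's trick applied to $\omega_t := (1-t)\omega_0 + t\omega_1$ with vector field $X_t$ defined by $\iota_{X_t}\omega_t = -\alpha$ produces a smooth flow $\psi_t$ that is identity on $Z$ (since $X_t \equiv 0$ there), preserves $L$ (since $X_t$ vanishes to infinite order on $L$), and satisfies $\psi_1^*\omega_1 = \omega_0$. Setting $\phi := \tilde{\phi} \circ \psi_1$ gives $\phi^*\Omega = \Omega$, $\phi|_Z = \phi_0$, and $\phi(L) = L$, as required.

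The main technical obstacle is maintaining smoothness across $L$ while preserving exact equality on $Z$. This is resolved by the infinite-order agreement supplied by Seeley's theorem (for $\tilde{\phi}$) together with the infinite-order vanishing of the primitive $\alpha$ on $L$ and its identical vanishing on $Z$, which ensure that both the Moser vector field and its flow are smooth on $\Sigma$ and trivial on $Z$.
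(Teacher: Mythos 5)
Your proof is correct but takes a genuinely different route from the paper's. The paper proceeds algebraically, staying in the area-preserving category throughout: after reducing (by composing with an explicit permutation map in $\Diff(\Sigma, \Omega; L)$) to the case where $\phi_0$ fixes each boundary component, it uses the Dehn--Lickorish theorem plus a parametrized Moser argument to isotope $\phi_0$ in $\Diff(Z,\omega)$ to a compactly supported map, composes with a near-boundary cutoff Hamiltonian to make the remaining factor Hamiltonian, extends the generating Hamiltonian function over $\Sigma$, and glues the resulting time-one maps. You instead extend the diffeomorphism directly and correct afterwards: Seeley-extend the collar coordinates across $L$ (a collar diffeomorphism because the Jacobian equals $1$ along $L$), fill in the capping-disk interiors arbitrarily, and run a relative Moser argument whose primitive vanishes identically on $Z$ and to infinite order on $L$, so the correcting flow is trivial on $Z$ and supported in the capping disks. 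Both are legitimate. The paper's route never leaves $\Diff(\Sigma,\Omega)$ and requires no jet-level control of the extension, but leans on Dehn--Lickorish and a detour through Hamiltonian isotopy classes; your route is shorter and more elementary, and it isolates the essential analytic fact, namely that the capping disks all have the same total area, so the Moser primitive can be chosen supported off of $Z$. Two points to spell out if you write this up: (i) the Seeley-extended map is injective (not merely a local diffeomorphism) on a sufficiently thin two-sided collar, by a standard compactness argument using injectivity on $L$; and (ii) the ``cutoff interpolation'' for filling in the disk interiors is the standard but nontrivial statement that a boundary collar germ of an orientation-preserving disk diffeomorphism extends to a diffeomorphism of the disk.
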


The proof of Proposition~\ref{prop:extension} relies on a series of lemmas. 

\begin{lem}\label{lem:compisotopy}
Fix $\phi_0\in \Diff(Z,\omega)$ and assume that each connected component of $\partial Z$ is $\phi_0$-invariant. Then $\phi_0$ is isotopic through area-preserving diffeomorphisms to some $\psi_0 \in \Diff(Z,\omega)$ which coincides with the identity map near $\partial Z$. 
\end{lem}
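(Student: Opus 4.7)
The plan is to proceed in two steps: first produce an area-preserving isotopy from $\phi_0$ to some $\phi_1$ that is the identity on $\partial Z$, then further isotope $\phi_1$ through area-preserving maps to some $\psi_0$ equal to the identity in an open neighborhood of $\partial Z$.

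For the first step, I would fix Weinstein collar coordinates $\tau_i : [0, \delta)_s \times (\bR/\bZ)_t \hookrightarrow Z$ around each boundary component $\gamma_i$, as in \S\ref{subsec:capping_off}, so that $\tau_i^* \omega = ds \wedge dt$. Since each $\gamma_i$ is $\phi_0$-invariant, $\phi_0|_{\gamma_i}$ is an orientation-preserving circle diffeomorphism $f_i$; by path-connectedness of $\Diff^+(\bR/\bZ)$, I can choose a smooth isotopy $\{f_i^\tau\}_{\tau \in [0, 1]}$ from $\text{id}$ to $f_i^{-1}$, realized as the flow of a time-dependent vector field $g_i(\tau, t) \partial_t$. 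I would then build a time-dependent Hamiltonian $H \in C^\infty([0,1]_\tau \times Z)$ supported near $\partial Z$ by setting, in the collar around each $\gamma_i$, $H(\tau, \tau_i(s, t)) = -s \cdot \chi(s) \cdot g_i(\tau, t)$ for a cutoff $\chi$ with $\chi(0) = 1$ supported in $[0, \delta/2)$, and extending by zero. Since $H$ vanishes on $\partial Z$, the flow $\psi^\tau_H$ is well-defined and area-preserving, and a short computation of the Hamiltonian vector field at $s = 0$ gives $g_i(\tau, t)\partial_t$, so $\psi^\tau_H|_{\gamma_i} = f_i^\tau$. Setting $\phi_1 := \psi^1_H \circ \phi_0$ produces a map that is the identity on $\partial Z$, with $\{\psi^\tau_H \circ \phi_0\}_{\tau \in [0,1]}$ the required isotopy.

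For the second step, $\phi_1$ is identity on $\partial Z$ and, after shrinking $\delta$ if necessary, is $C^\infty$-close to the identity on the collar $\{s < \delta\}$. I would argue that $\phi_1$ restricted to the collar is the time-one map $\psi^1_K$ of some autonomous Hamiltonian $K$ on the collar that vanishes on $\partial Z$. The existence of such a $K$ amounts to $\phi_1|_{\text{collar}}$ being Hamiltonian, which in turn reduces to the vanishing of the flux of an area-preserving isotopy from $\text{id}$ to $\phi_1$ in the collar; for a sufficiently thin collar such an isotopy can be chosen to fix $\partial Z$ pointwise throughout, so the generating divergence-free vector fields vanish on $\partial Z$, forcing their integrals on any cycle parallel to $\partial Z$ to vanish. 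I would then define $\widetilde K := \eta(s) K$ using a cutoff $\eta$ with $\eta = 1$ on a smaller sub-collar and supported in the given collar, and set $\psi_0 := \phi_1 \circ (\psi^1_{\widetilde K})^{-1}$. Provided $\delta$ is small enough that the flow of $\widetilde K$ starting in the innermost collar stays inside $\{\eta = 1\}$, we have $\psi^1_{\widetilde K} = \psi^1_K = \phi_1$ there, so $\psi_0$ is the identity in an open neighborhood of $\partial Z$; the family $\tau \mapsto \phi_1 \circ (\psi^\tau_{\widetilde K})^{-1}$ connects $\phi_1$ to $\psi_0$ through area-preserving maps.

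The main obstacle is Step 2, specifically the verification that $\phi_1$ restricted to a small collar can be written as $\psi^1_K$ for a Hamiltonian $K$ vanishing on $\partial Z$, and the careful support-control needed for the cutoff to genuinely yield the identity on an open neighborhood of $\partial Z$. Step 1 is comparatively routine, relying on path-connectedness of $\Diff^+(\bR/\bZ)$ and a direct Hamiltonian construction in Weinstein collars.
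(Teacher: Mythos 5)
Your proof takes a genuinely different route from the paper. The paper invokes the Dehn--Lickorish theorem to produce a smooth (not a priori area-preserving) path from $\phi_0$ to a compactly supported, area-preserving composition of Dehn twists, and then runs a two-parameter Moser argument to push that whole path into $\Diff(Z,\omega)$ while fixing the endpoints. Your argument is boundary-local and avoids Dehn--Lickorish entirely: a collar-supported Hamiltonian first kills the dynamics on $\partial Z$, and then a cut-off Hamiltonian realizes the resulting map near the boundary, so that composing with its inverse produces the identity on a smaller collar. Your route is more self-contained, at the cost of requiring delicate collar control that the paper's global Moser step sidesteps.

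That said, Step~2 as written has real gaps. First, the claim that $\phi_1$ is ``$C^\infty$-close to the identity on the collar $\{s<\delta\}$'' after shrinking $\delta$ is false: an area-preserving map fixing $\{s=0\}$ pointwise only has linearization along $\{s=0\}$ of the shear form $\left(\begin{smallmatrix}1&0\\ c&1\end{smallmatrix}\right)$ with $c$ unconstrained, so $\phi_1$ need not be close to the identity on any collar. This assertion isn't load-bearing for the rest of the argument, but it indicates the local picture wasn't fully checked. Second, there is no reason the Hamiltonian $K$ should be autonomous; it should be time-dependent, generated by a flux-zero area-preserving isotopy from $\operatorname{Id}$ to $\phi_1|_{\text{collar}}$ fixing $\{s=0\}$ pointwise. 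The existence of such an isotopy --- connectedness of the group of area-preserving embeddings of a half-open annulus fixing the boundary circle pointwise --- is true, but it is a nontrivial ingredient that needs justification (e.g., interpolation of embeddings plus a relative Moser argument), not a passing remark. Third, the support control for the cutoff is plausible but subtler than stated: because $K_\tau$ vanishes identically on $\{s=0\}$, the transverse component of $X_{K_\tau}$ is $O(s)$, so a Gronwall estimate gives $s(\tau)\le s(0)e^{C\tau}$, and the inner sub-collar on which $\psi^1_{\widetilde K}=\psi^1_K$ must be taken exponentially smaller than $\{\eta=1\}$; this is not achieved simply ``provided $\delta$ is small enough,'' since $K$ itself changes with $\delta$. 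None of these is fatal, and the overall strategy is sound, but as written the proof has holes exactly in the step you correctly flagged as the obstacle.
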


\begin{proof}
By the Dehn--Lickorish theorem, there exists a smooth path $\{\phi_s\}_{s\in [0,1]}$ of diffeomorphisms such that $\phi = \phi_0$ and $\psi_0 := \phi_1$ is a composition of compactly supported, area-preserving Dehn twists. For convenience, we assume this path is constant near $s = 0$ and $s = 1$. We use a Moser argument to deform the family $\phi_s$ into a family in $\Diff(Z, \omega)$ while keeping the endpoints fixed.

For any $s \in [0,1]$, write $\omega_s := \phi_s^*\omega$. For any $s \in [0,1]$ and $t \in [0,1]$, write $\omega_{s,t} := (1-t)\omega_s + t\omega$. Note that each of these are a smooth area form on $Z$. Because $Z$ has non-empty boundary, there exists some one-form $\beta$ such that $\omega = d\beta$. Choose a one-parameter family $\{\beta_s\}_{s \in [0,1]}$ of smooth one-forms such that $d\beta_s = \omega_s$ for each $s \in [0,1]$ and $\beta_s \equiv \beta$ near $s = 0$ and $s = 1$. Write $\beta_{s,t} := (1 - t)\beta_s + t\beta$ for each $s \in [0,1]$ and $t \in [0,1]$. We now claim that there exists a smooth two-parameter family of diffeomorphisms $\{\eta_{s,t}\}_{s,t\in [0,1]}$ such that i) $\eta_{s,t} = \operatorname{Id}_Z$ near $s = 0$ and $s = 1$, ii) $\eta_{s,0} = \operatorname{Id}_Z$ for any $s \in [0,1]$, and 
\begin{equation}\label{eq:moser}\eta_{s,t}^*\omega_{s,t} = \omega \end{equation} for any $s$ and $t$. This claim, if true, implies the lemma, since the family $\{\phi_{s,1} := \phi_s \circ \eta_{s,1}\}_{s \in [0,1]}$ is an isotopy in $\Diff(Z, \omega)$ from $\phi_0$ to $\psi_0$. For each fixed $s \in [0,1]$, let $\{X_{s,t}\}_{t \in [0,1]}$ be the time-dependent vector field generating the isotopy $\{\eta_{s,t}\}_{t \in [0,1]}$. For any fixed $s \in [0,1]$, differentiate \eqref{eq:moser} with respect to $t$ to find
\begin{equation*} \mathcal{L}_{X_{s,t}}\omega_{s,t} + \omega - \omega_s = 0. \end{equation*}

Expand the left-hand side using Cartan's formula to deduce
\begin{equation} \label{eq:moser1} d(\omega_{s,t}(X_{s,t}, -) + \beta - \beta_s) = 0. \end{equation}

Since each $\omega_{s,t}$ is an area form, for any fixed $s$ and $t$ there exists a unique vector field $X_{s,t}$ such that $\omega_{s,t}(X_{s, t}, -) = \beta_s-\beta$. If we choose $X_{s,t}$ to solve this equation, then the two-parameter family $\{\eta_{s,t}\}_{s,t\in[0,1]}$ has the following properties. Since $\omega_{s,t}$ varies smoothly in $s$ and $t$ and $\beta_s$ varies smoothly in $s$, the vector fields $\{X_{s,t}\}$ vary smoothly in $s$ and $t$, and therefore $\{\eta_{s,t}\}_{s, t \in [0,1]}$ varies smoothly in $s$ and $t$. Since $\omega_{s,t} \equiv \omega$ and $\beta_s \equiv \beta$ near $s = 0$ and $s = 1$, it follows that $X_{s,t} \equiv 0$ near $s = 0$ and $s = 1$, and therefore $\eta_{s,t} = \operatorname{Id}_Z$ near $s = 0$ and $s = 1$. It is also immediate from the definition that $\eta_{s,0} = \operatorname{Id}_Z$ for each $s \in [0,1]$. Finally, the identity \eqref{eq:moser1} implies \eqref{eq:moser}. This proves the lemma. 

\end{proof}

\begin{lem}\label{lem:zeroflux}
Fix any $\psi\in \Diff(Z,\omega)$ which is isotopic to the identity through area-preserving diffeomorphisms. Then there exists $\psi_1\in\Diff(Z,\omega)$, which coincides with the identity near $\partial Z$, such that the map $\psi_1 \circ\psi$ is a Hamiltonian diffeomorphism.
\end{lem}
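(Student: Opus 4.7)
The plan is to exploit the flux homomorphism and then invoke Lemma~\ref{lem:hamiltonian_criterion}. First fix an area-preserving isotopy $\{\psi_s\}_{s\in[0,1]}$ from $\Id$ to $\psi$, constant near $s=0,1$, and set $X_s := \tfrac{d}{ds}\psi_s \circ \psi_s^{-1}$ and $\lambda_s := \omega(-, X_s)$. Area-preservation makes each $\lambda_s$ closed. Since $\psi_s$ preserves $\partial Z$, the field $X_s$ is tangent to $\partial Z$; because $\dim \partial Z = 1$, any $v \in T\partial Z$ is parallel to $X_s$ so that $\lambda_s(v) = \omega(v, X_s) = 0$, i.e.\ $\lambda_s$ restricts to zero on $\partial Z$. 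Define the flux class $\mu := \bigl[\int_0^1 \lambda_s\,ds\bigr] \in H^1(Z;\mathbb{R})$. Because $\mu$ is represented by a closed one-form vanishing on $\partial Z$, the long exact sequence of the pair $(Z,\partial Z)$ places $\mu$ in the image of $H^1(Z,\partial Z;\mathbb{R}) \to H^1(Z;\mathbb{R})$, so it can be represented by a closed one-form $\alpha$ that is compactly supported in the interior of $Z$.

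Now define $Y$ by $\omega(-, Y) = -\alpha$. This vector field is compactly supported in the interior, its flow $\{\eta^t\}$ is globally defined and preserves $\omega$ (since $\alpha$ is closed), and each $\eta^t$ equals $\Id$ in a neighborhood of $\partial Z$. Set $\psi_1 := \eta^1$. Concatenating $\{\psi_s\}$ with $\{\eta^t\circ\psi\}_{t\in[0,1]}$ produces an area-preserving isotopy from $\Id$ to $\psi_1\circ\psi$; the time-dependent vector field on the second half is simply $Y$, whose associated one-form is $-\alpha$, so the total flux equals $\mu + [-\alpha] = \mu - \mu = 0$. Pulling back $\Omega_{\psi_1\circ\psi}$ along the diffeomorphism $F\colon \bR/\bZ \times Z \to M_{\psi_1\circ\psi}$ built from this concatenated isotopy, exactly as in Step~1 of the proof of Lemma~\ref{lem:hamiltonian_criterion}, yields $\omega + \widetilde{\lambda}_s \wedge ds$ with $\bigl[\int_0^1 \widetilde{\lambda}_s\,ds\bigr] = 0$. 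Since $\omega$ is itself exact ($Z$ has boundary), we conclude that $\Omega_{\psi_1\circ\psi}$ is exact on $M_{\psi_1\circ\psi}$, and Lemma~\ref{lem:hamiltonian_criterion} then gives that $\psi_1 \circ \psi$ is Hamiltonian.

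The main technical point is the vanishing $\lambda_s\vert_{\partial Z} \equiv 0$, which is a genuinely two-dimensional phenomenon and is precisely what allows the entire flux obstruction to be realized by a perturbation supported away from $\partial Z$; the remainder of the argument is a bookkeeping exercise with fluxes of concatenated isotopies together with the exactness criterion already established in Lemma~\ref{lem:hamiltonian_criterion}.
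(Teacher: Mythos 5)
Your proof is correct, but it takes a genuinely different route from the paper. You package the obstruction via the flux homomorphism: you compute the flux class $\mu$ of the given isotopy, observe (using $\lambda_s\vert_{\partial Z}\equiv 0$) that it lifts to $H^1(Z,\partial Z;\mathbb{R})$ and hence can be realized by a closed one-form $\alpha$ compactly supported in the interior, take $\psi_1$ to be the time-one flow of the symplectic dual of $-\alpha$, and then invoke the exactness criterion from Lemma~\ref{lem:hamiltonian_criterion} after checking that the concatenated isotopy has zero flux. The paper instead works entirely ``near the boundary'': from $\lambda_t\vert_{\partial Z}\equiv 0$ it produces time-dependent primitives $f_t$ of $\lambda_t$ on a collar of $\partial Z$, cuts them off by a function $\chi$ equal to $1$ near $\partial Z$ to form an honest Hamiltonian $H_t=\chi f_t$, and sets $\psi_1 := \tau\circ\psi^{-1}$ where $\tau$ is the time-one map of $H$. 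Since $\tau$ and $\psi$ have the same generating vector field near $\partial Z$, they agree there, so $\psi_1$ is the identity near $\partial Z$; and $\psi_1\circ\psi=\tau$ is Hamiltonian by construction, with no need to invoke Lemma~\ref{lem:hamiltonian_criterion} at all. Both proofs hinge on the same two-dimensional fact that $\lambda_t$ vanishes on $\partial Z$; the paper's version is the more elementary and self-contained of the two, while yours has the small advantage of making the role of the flux homomorphism explicit. One minor point: for the pullback map $F:\bR/\bZ\times Z\to M_{\psi_1\circ\psi}$ to be well-defined, the concatenated isotopy must be constant near $t=0$ and $t=1$; your second segment $\{\eta^t\circ\psi\}$ is not, so you should reparametrize $\eta^t$ by a bump function before concatenating (this does not change the flux).
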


\begin{proof}
Fix an isotopy $\{\tau_t\}_{t \in [0,1]}$, constant near $t = 0$ and $t = 1$, such that $\tau_0 = \operatorname{Id}_Z$ and $\tau_1 = \psi$. Write $X_t$ for the time-dependent vector field generating this isotopy, and for each $t$ set $\lambda_t := \omega(-, X_t)$. These are closed one-forms which restrict to $0$ on the boundary, and vanish identically if $t$ is near $0$ or $1$. Fix any small neighborhood $U$ of $\partial Z$ which retracts onto $\partial Z$. Then there exists a smooth family $\{f_t\}_{t \in [0,1]}$ of smooth functions on $U$, locally constant on $\partial Z$ and equal to $0$ for $t$ near $0$ and $1$, such that $\lambda_t|_U = df_t$ for every $t$. Choose a cutoff function $\chi: Z \to [0,1]$ which is equal to $1$ on a neighborhood of $\partial Z$ and compactly supported in $U$. Define a time-dependent Hamiltonian $H_t := \chi f_t$ and let $\tau$ be its time-one map. Then $\tau$ agrees with $\psi$ in a smaller neighborhood of $\partial Z$. The map $\psi_1 := \tau \circ \psi^{-1}$ satisfies the conditions of the lemma. 
\end{proof}

\begin{lem}\label{lem:no_fix_boundary}
Write the inessential Lagrangian $L \subset \Sigma$ as a disjoint union $\cup_{i=1}^k \gamma_i$ of embedded curves. Let $\sigma: \{1, \ldots, k\} \to \{1, \ldots, k\}$ be any bijection from the set $\{1, \ldots, k\}$ to itself. Then there exists $g \in \Diff(\Sigma, \Omega; L)$ which permutes the components of $L$ as prescribed by $\sigma$. That is, for any $i \in \{1, \ldots, k\}$ we have $g(\gamma_i) = \gamma_{\sigma(i)}$. 
\end{lem}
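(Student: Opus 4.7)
The plan is to reduce to the case of a transposition and then realize each transposition by a Hamiltonian swap localized in an embedded disk containing two capping disks. Since every permutation of $\{1,\ldots,k\}$ is a product of transpositions and $\Diff(\Sigma,\Omega;L)$ is a group, it suffices to produce, for each $i\ne j$, a map $g_{ij}\in\Diff(\Sigma,\Omega;L)$ that interchanges $\gamma_i$ and $\gamma_j$ while fixing every other $\gamma_l$ setwise.

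First I would use the connectedness of $\Sigma$ (and the fact that removing finitely many disjoint closed disks from a connected surface keeps it connected) to pick a smoothly embedded arc $\alpha\subset\Sigma$ from a point of $\gamma_i$ to a point of $\gamma_j$ whose interior is disjoint from $\bigsqcup_l D_l$. Thickening $\alpha$ by a sufficiently thin tubular neighborhood and adjoining $D_i\cup D_j$ yields a compact embedded disk $N\subset\Sigma$ containing $D_i\cup D_j$ in its interior and disjoint from every other $D_l$ and $\gamma_l$. Recall from \S\ref{subsec:capping_off} that all the $D_l$ have the same area $\pi r^2:=(B-A)/n$. I would then choose $R>2r$ with $\pi R^2=\text{area}(N)$ (shrinking the tubular neighborhood of $\alpha$ if necessary to match areas), let $\mathbf{B}\subset\mathbb{R}^2$ be the open round disk of radius $R$ with its standard area form $\omega_0$, and let $\mathbf{B}^\pm$ denote the round disks of radius $r$ centered at $(\pm R/2,0)$.

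Next, I would construct a symplectic embedding $\Psi:(\mathbf{B},\omega_0)\hookrightarrow(\Sigma,\Omega)$ with image $N$ satisfying $\Psi(\mathbf{B}^\pm)=D_i,D_j$ by first fixing symplectomorphisms $\mathbf{B}^\pm \to D_i, D_j$ (which exist because both sides are disks of area $\pi r^2$), and then running a relative Moser argument on the annular complement $\mathbf{B}\setminus\operatorname{int}(\mathbf{B}^+\cup \mathbf{B}^-)$, whose area matches that of $N\setminus\operatorname{int}(D_i\cup D_j)$ by our choice of $R$. Inside $\mathbf{B}$, let $\sigma$ be the time-$\pi$ map of an autonomous Hamiltonian $H(\rho)$ that equals $-\rho^2/2$ on a round disk containing $\mathbf{B}^+\cup \mathbf{B}^-$ and is constant for $\rho$ near $R$; then $\sigma$ is a compactly supported symplectomorphism of $\mathbf{B}$ that restricts to rotation by $\pi$ on the central region, hence interchanges $\mathbf{B}^+$ and $\mathbf{B}^-$. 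Setting $g_{ij}:=\Psi\circ\sigma\circ\Psi^{-1}$ on $N$ and the identity on $\Sigma\setminus N$ produces a smooth area-preserving diffeomorphism of $\Sigma$ that swaps $\gamma_i$ and $\gamma_j$ while fixing every other $\gamma_l$ pointwise, completing the construction.

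The main obstacle is the construction of $\Psi$: one must preserve prescribed symplectic identifications on $\mathbf{B}^\pm\cong D_i,D_j$ while simultaneously arranging $\Psi^*\Omega=\omega_0$ on the annular complement. This is handled by a standard relative Moser trick on surfaces with boundary, whose only nontrivial input---the matching of total areas of $\mathbf{B}\setminus\operatorname{int}(\mathbf{B}^+\cup \mathbf{B}^-)$ and $N\setminus\operatorname{int}(D_i\cup D_j)$---is arranged by our explicit choice of $R$. Everything else is routine.
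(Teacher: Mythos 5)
Your approach is genuinely different from the paper's, and it contains a real gap. The paper's argument is global: by the isotopy extension theorem there is \emph{some} diffeomorphism $g_0$ of $\Sigma$ carrying each $D_i$ onto $D_{\sigma(i)}$; then one runs a single Moser argument on all of $\Sigma$, using that $\Omega - g_0^*\Omega$ is exact and has zero period on each $\gamma_i$ (which is exactly where the equal-areas hypothesis enters) to find a primitive vanishing on $L$, so the Moser isotopy preserves each $\gamma_i$ and corrects $g_0$ to an area-preserving map. Your argument is local: reduce to transpositions and realize each by a compactly supported Hamiltonian ``swap'' conjugated into an embedded disk $N$ containing $D_i\cup D_j$. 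Conceptually this is fine and even produces $g$ inside the Hamiltonian group, which is more than the lemma asks for, but the specific construction does not go through in general.

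The problem is the inequality $R>2r$. With $\mathbf{B}^\pm$ placed at $(\pm R/2,0)$, disjointness and containment force $R>2r$, i.e.\ $\operatorname{area}(N)=\pi R^2>4\pi r^2=2\bigl(\operatorname{area}(D_i)+\operatorname{area}(D_j)\bigr)$, so the thickened arc must have area strictly greater than $\operatorname{area}(D_i)+\operatorname{area}(D_j)$. That is a nontrivial constraint: the thickened arc lives in $Z$, whose total area is $A$, while each $D_l$ has area $(B-A)/n$, and $B>A$ is arbitrary in \S\ref{subsec:capping_off}. For $B$ large relative to $A$ (e.g.\ two capping disks together of area nearly $B$ on a sphere, with $Z$ a thin annulus) there is simply not enough room for such a tube, and your construction cannot start. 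Worse, the parenthetical ``shrinking the tubular neighborhood of $\alpha$ if necessary to match areas'' goes in the wrong direction: shrinking the tube \emph{decreases} $\operatorname{area}(N)$ and hence $R$, whereas you would need to \emph{increase} it to reach $R>2r$. So as written the proposal is not correct for the lemma as stated.

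The gap is fixable within your framework: either note that the embedded sub-disks $\mathbf{B}^\pm\subset\mathbf{B}$ need not be round (any two disjoint compact disks of area $\pi r^2$ inside $\mathbf{B}$, chosen to be swapped by the rotation by $\pi$, will do; then the only constraint is $\pi R^2>2\pi r^2$, which is automatic since $N\supset D_i\cup D_j$), or abandon the transposition reduction and use the paper's global Moser step, whose only input is the exactness of $\Omega - g_0^*\Omega$ and the equal areas of the $D_l$, and which imposes no room-to-maneuver constraint at all.
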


\begin{proof}
For each $i$, write $D_i$ for the embedded closed disk bounded by $\gamma_i$. Note by construction each of the $D_i$ have the same area. It is a straightforward consequence of the isotopy extension theorem that there exists a diffeomorphism $g_0: \Sigma \to \Sigma$ whose restriction to $D_i$ is a diffeomorphism onto $D_{\sigma(i)}$. We now use a Moser-type argument to modify $g_0$ to an area-preserving map which satisfies the same property. Write $\Omega_0 := g_0^*\Omega$. The two-form $\Omega - \Omega_0$ integrates to $0$ over $\Sigma$, so it is exact. Choose any primitive $\sigma_0$. Note also that
$$\int_{D_i} \Omega_0 = \int_{D_{\sigma(i)}} \Omega = \int_{D_i} \Omega$$
for every $i$, so Stokes' theorem implies that $\int_{\gamma_i}\sigma_0 = 0$ for every $i$. This shows that the restriction of $\sigma_0$ to each of the loops $\gamma_i$ is exact. It follows that there exists a smooth function $f$, compactly supported in a neighborhood of $L$, such that $df|_{L} \equiv \sigma_0|_{L}$. Write $\sigma := \sigma_0 - df$. The one-form $\sigma$ is a primitive of $\Omega - \Omega_0$ which restricts to $0$ on $L$.  

The rest of the proof follows well-known arguments. For each $t \in [0,1]$, write $\Omega_t := (1 - t)\Omega_0 + t\Omega$. Define a time-dependent vector field $\{V_t\}_{t \in [0,1]}$ implicitly by the Moser equation $\Omega_t(V_t, -) = -\sigma$. Then the isotopy $\{\tau_t\}_{t \in [0,1]}$ it generates satisfies $\tau_t^*\Omega_t = \Omega_0$ for every $t$. Since $\sigma$ restricts to $0$ on $L$, it follows that $V_t$ is tangent to $L$, so the isotopy fixes each component of $L$. The map $g := g_0 \circ (\tau_1)^{-1}$ is the area-preserving map required by the lemma. 
\end{proof}

\begin{proof}[Proof of Proposition~\ref{prop:extension}]

We prove the proposition in $2$ steps.

\noindent\textbf{Step $1$:} This step constructs the extension under the assumption that extensions exist for maps which preserve each boundary component of $Z$. By Lemma~\ref{lem:no_fix_boundary}, there exists $g \in \Diff(\Sigma, \Omega; L)$ which permutes the components of $L$ in the same way that $\phi_0$ permutes them. Write $g_0$ for its restriction to $Z$. Then $\psi_0 := \phi_0 \circ g_0^{-1} \in \Diff(Z, \omega)$ leaves each boundary component of $Z$ invariant. Fix an extension $\psi \in \Diff(\Sigma, \Omega; L)$ of $\psi_0$. Then $\phi := \psi \circ g \in \Diff(\Sigma, \Omega; L)$ extends $\phi_0$. 

\noindent\textbf{Step $2$:} This step constructs the extension under the assumption that each boundary component of $Z$ is preserved by $\phi_0$. By Lemma~\ref{lem:compisotopy}, there exists some compactly supported $\psi_{0}\in \Diff(Z,\omega_{Z})$ isotopic to $\phi_0$ through area-preserving diffeomorphisms. Applying Lemma \ref{lem:zeroflux} to the map $\psi_0^{-1} \circ \phi_0$, which is isotopic to the identity, there is a compactly supported map $\psi_1 \in \Diff(Z,\omega_{Z})$ such that $\psi_1 \circ \psi_0^{-1} \circ \phi_0$ is Hamiltonian. Fix a smooth Hamiltonian function $H: \bR/\bZ \times Z \to \bR$ such that the time-one map $\psi^1_H$ of the Hamiltonian flow is equal to $\psi_1 \circ \psi_0^{-1} \circ \phi_0$. 

Fix a smooth extension $\widetilde{H}: \bR/\bZ \times \Sigma \to \bR$ of $H$ to the closed surface $\Sigma$. Write $\phi_1 \in \Diff(\Sigma, \Omega)$ for its time-one map. Note that since $\psi_0$ and $\psi_1$ are both compactly supported in the interior of $Z$, the restriction of the map $\phi_1$ to $Z$ coincides with $\phi_0$ in a neighborhood of $\partial Z$. It follows that the map $\phi \in \Diff(\Sigma, \Omega)$ specified by 
\begin{equation*}
\phi(x):=\begin{cases*}
                   \phi_{0}(x) & if  $x\in Z$  \\
                   \phi_{1}(x) & if $x\notin  Z.$
                 \end{cases*}
 \end{equation*}
is well-defined and smooth. This map is the desired extension of $\phi_0$. 
\end{proof}

We conclude with a final lemma determining when extensions of rational maps are rational.

\begin{lem} \label{lem:rationality}
Fix $\phi \in \Diff(\Sigma, \Omega; L)$ and write $\phi_0 \in \Diff(Z, \omega)$ for its restriction to $Z$. Assume that $B$ is a rational multiple of $A$. Then $\phi$ is rational if and only if $\phi_0$ is. 
\end{lem}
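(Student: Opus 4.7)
The key geometric observation is the decomposition $M_\phi = M_{\phi_0} \cup_T M_{\mathbb D}$, where $T = \partial M_{\phi_0}$ is the mapping torus of $\phi|_L$ and $M_{\mathbb D}$ is the mapping torus of $\phi|_{\mathbb D}$. Each connected component of $M_{\mathbb D}$ is diffeomorphic to a solid torus, regardless of how $\phi$ permutes the capping disks within an orbit, so $H^2(M_{\mathbb D};\mathbb R)=0$, and by excision $H^k(M_\phi,M_{\phi_0};R)\cong H^k(M_{\mathbb D},T;R)$ for any coefficient ring $R$. Two auxiliary facts will be useful: $\Omega_\phi|_T = 0$ (since $\omega$ trivially vanishes on the one-dimensional curves of $L$), and $\int_{D_i}\Omega = (B-A)/n \in \mathbb Q\cdot B$ under the hypothesis $A/B\in\mathbb Q$.

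The forward direction is immediate from restriction along $i:M_{\phi_0}\hookrightarrow M_\phi$: the class $B^{-1}[\omega_{\phi_0}] = i^*(B^{-1}[\Omega_\phi])$ is rational, and $A^{-1}[\omega_{\phi_0}] = (B/A)\cdot B^{-1}[\omega_{\phi_0}]$ remains rational since $B/A\in\mathbb Q$. For the reverse direction, assume $\phi_0$ is rational. Apply the long exact sequence of the pair $(M_\phi,M_{\phi_0})$ with rational coefficients to lift $B^{-1}[\omega_{\phi_0}] \in H^2(M_{\phi_0};\mathbb Q)$ to a rational class $\tilde r\in H^2(M_\phi;\mathbb Q)$; the obstruction in $H^3(M_\phi,M_{\phi_0};\mathbb Q)$ vanishes because it already vanishes over $\mathbb R$ (thanks to the real lift $B^{-1}[\Omega_\phi]$), and $\mathbb Q\hookrightarrow\mathbb R$ is injective on these cohomology groups. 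Set $y := B^{-1}[\Omega_\phi] - \tilde r$; then $i^*y = 0$, so $y$ lies in the image of $j^*:H^2(M_\phi,M_{\phi_0};\mathbb R)\to H^2(M_\phi;\mathbb R)$.

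To conclude $y$ is rational, and hence that $B^{-1}[\Omega_\phi] = \tilde r + y$ is rational, check that $y$ pairs rationally with every integral 2-cycle $C\in H_2(M_\phi;\mathbb Z)$. Make $C$ transverse to $T$ and decompose $C = C_0 + C_{\mathbb D}$ into 2-chains with common boundary $\eta = C\cap T$. Since $\eta$ bounds in both $M_{\mathbb D}$ and $M_{\phi_0}$, and the $\gamma_i$'s span the kernel of $H_1(T)\to H_1(M_{\mathbb D})$ while the time generators do not, we get $[\eta] = \sum a_i[\gamma_i]$ with $\sum a_i[\gamma_i]=0$ in $H_1(M_{\phi_0})$. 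Stokes' theorem applied to a primitive of $\Omega_\phi|_{M_{\mathbb D}}$ (available because $H^2(M_{\mathbb D})=0$) yields $\int_{C_{\mathbb D}}\Omega_\phi = -\sum a_i(B-A)/n \in \mathbb Q\cdot B$. Choosing a representative $\omega_r$ of $\tilde r$ with $\omega_r|_{M_{\mathbb D}} = 0$ (possible by coboundary modification using $H^2(M_{\mathbb D})=0$), the remaining $C_0$-contribution combined with $\int_C\omega_r$ reorganizes via Stokes into an integral of $\omega_{\phi_0}$ over an absolute 2-cycle in $M_{\phi_0}$, obtained by closing $C_0$ along $T$ using the cap disks. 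This integral lies in $\mathbb Q\cdot A\subseteq \mathbb Q\cdot B$ by the rationality of $\phi_0$. The main obstacle will be this reorganization: $C_0$ is only a chain, so the rationality of $\phi_0$ does not directly control $\int_{C_0}\omega_{\phi_0}$; one must carefully track how the auxiliary terms from closing off $\partial C_0$ contribute rational multiples of $B$ via $A/B\in\mathbb Q$ and the specific areas of the capping disks.
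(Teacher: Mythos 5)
Your forward direction is fine. The reverse direction, however, has a genuine gap that you yourself flag at the end, and it is not a minor bookkeeping issue but the heart of the matter.

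First, the detour through the rational lift $\tilde r$ and the class $y = B^{-1}[\Omega_\phi]-\tilde r$ buys nothing: since $\tilde r$ is rational, showing $\langle y, C\rangle \in \mathbb{Q}$ for all integral $C$ is exactly the same as showing $B^{-1}\int_C \Omega_\phi \in \mathbb{Q}$, so you are back to the original question. The real content is the cycle decomposition $C = C_0 + C_{\mathbb D}$ along $T$, and there your argument breaks. The $C_{\mathbb D}$ term is handled correctly: with $[\eta]=\sum a_i[\gamma_i]$ in $H_1(T)$, a primitive $\beta$ of $\Omega_\phi|_{M_{\mathbb D}}$ gives $\int_{C_{\mathbb D}}\Omega_\phi = -\sum a_i (B-A)/n \in \mathbb{Q}\cdot B$. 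But for $C_0$ you propose to produce an absolute $2$-cycle in $M_{\phi_0}$ by ``closing $C_0$ along $T$ using the cap disks.'' The cap disks live in $M_{\mathbb D}$, not in $M_{\phi_0}$, so capping $C_0$ with them yields a cycle in $M_\phi$---in fact one homologous to $C$ itself, so the computation is circular. If instead you close $\eta$ by a $2$-chain $F \subset M_{\phi_0}$ (which exists since $\sum a_i[\gamma_i]=0$ in $H_1(M_{\phi_0})$), then $C_0 - E - F$ is a cycle in $M_{\phi_0}$ for a suitable correction chain $E$ in $T$, but the closing term $\int_F \omega_{\phi_0}$ is not controlled by the rationality of $\phi_0$: rationality constrains integrals over \emph{cycles} in $M_{\phi_0}$, not over an arbitrary chain with boundary $\sum a_i\gamma_i$. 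This is precisely the obstacle you name in your last sentence, and nothing in the argument resolves it.

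The paper's proof avoids this issue entirely by working with a specific generating set. It exhibits classes $[\Sigma],[S_1],\dots,[S_k]$ generating $H_2(M_\phi;\mathbb{Z})$, where the $S_i$ are built from $\phi$-invariant $1$-cycles $c_i$ in the interior of $Z$ and lie inside $M_{\phi_0}$, and simultaneously exhibits $[S_1],\dots,[S_k]$ together with boundary tori $[T_j]$ as generators of $H_2(M_{\phi_0};\mathbb{Z})$. Since $\Omega_\phi$ integrates to $B$ over $[\Sigma]$ and to $0$ over each $[T_j]$, rationality of $\phi$ and of $\phi_0$ both reduce to $B^{-1}\int_{S_i}\Omega_\phi\in\mathbb{Q}$ once $A/B\in\mathbb{Q}$. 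In other words, the structural input your argument is missing is exactly the fact that, up to the fiber class $[\Sigma]$, every class in $H_2(M_\phi)$ is representable by a cycle that does \emph{not} cross the separating tori $T$; this is a Mayer--Vietoris computation (the kernel of $H_1(T)\to H_1(M_{\phi_0})\oplus H_1(M_{\mathbb D})$ is rank one, spanned by $\partial[\Sigma]=\sum[\gamma_j]$), and it is what makes the chain-level estimate unnecessary. Without it, the pairing of $\Omega_\phi$ with a cycle transverse to $T$ cannot be decomposed into separately controlled pieces in the way your proposal suggests.
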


\begin{proof}
Fix any collection of $1$-cycles $c_1, \ldots, c_k$ in $Z$ which are disjoint from $L$ and generate $\ker(\phi_* - \text{id}) \subset H_1(\Sigma; \mathbb{Z})$. Fix a collection of $1$-cycles $\gamma_1, \ldots, \gamma_{\ell} \subset L$ which generate $\ker( (\phi|_L)_* - \text{id}) \subset H_1(L; \mathbb{Z})$. For each $i \in \{1, \ldots, k\}$, define a $2$-cycle $S_i \subset M_{\phi}$ by capping off the image of $[0,1] \times c_i \subset [0,1] \times \Sigma$ in $M_{\phi}$ with a $2$-chain in $\{0\} \times \Sigma$ with boundary $\phi(c_i) - c_i$. For each $j \in \{1, \ldots, \ell\}$, the image of $[0,1] \times \gamma_j \subset [0,1] \times \Sigma$ in $M_\phi$ is a $2$-cycle $T_j$. 

The cycles $S_i$ are contained in $M_{\phi_0}$, while the cycles $T_j$ are contained in $\partial M_{\phi_0}$. It follows from the Mayer--Vietoris sequence that $H_2(M_{\phi}; \mathbb{Z})$ is generated by $[\Sigma], [S_1], \ldots, [S_k]$, while $H_2(M_{\phi_0}; \mathbb{Z})$ is generated by $[S_1], \ldots, [S_k], [T_1], \ldots, [T_{\ell}]$. 

The two-form $\Omega_{\phi_0}$ on $M_{\phi_0}$ is the restriction of $\Omega_\phi$ to $M_{\phi_0}$. Also, it restricts to $0$ on $\partial M_{\phi_0}$, so it integrates to $0$ on each cycle $T_j$. It follows that the map $\phi_0$ is rational if and only if 
$$A^{-1}\int_{S_i} \Omega_\phi \in \mathbb{Q}$$
for all $i$. Since $B$ is a rational multiple of $A$, this is true if and only if $B^{-1}\int_{S_i} \Omega_\phi \in \mathbb{Q}$ for all $i$. The integral of $\Omega_\phi$ over $\Sigma$ is equal to $B$, so this is true if and only if $\phi$ is rational. Following this chain of implications yields the desired outcome, that $\phi$ is rational if and only if $\phi_0$ is rational. 
\end{proof}

We assume from now on that $B$ is a rational multiple of $A$, so that Lemma \ref{lem:rationality} is relevant.

\subsection{Proof of generic density}\label{subsec:density_proof}

We prove Theorems~\ref{thm:boundary_density} and \ref{thm:boundary_monotone_density} here. They follow from Propositions~\ref{prop:boundary_closing_monotone} and \ref{prop:boundary_closing} below by standard Baire category arguments. These propositions respectively follow from Propositions~\ref{prop:closing_lemma} and \ref{prop:closing_lemma2} above. 

\begin{prop}\label{prop:boundary_closing_monotone}
Fix a rational map $\phi_0 \in \Diff(Z, \omega)$. For any open set $U \subset Z\, \setminus\,\partial Z$, there exists an arbitrarily $C^\infty$-small Hamiltonian $H \in C^\infty(\bR/\bZ \times Z)$, compactly supported in $\bR/\bZ \times U$, such that the perturbed map $\phi_0^H = \phi_0 \circ \psi^1_H$ has a periodic point in $U$. 
\end{prop}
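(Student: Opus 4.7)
The plan is to reduce directly to Proposition~\ref{prop:closing_lemma} by passing to the closed surface $\Sigma$ constructed in \S\ref{subsec:capping_off}. I would proceed as follows.

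First, apply Proposition~\ref{prop:extension} to extend $\phi_0$ to a map $\phi \in \Diff(\Sigma, \Omega; L)$ with $\phi|_Z = \phi_0$. Since we are working under the standing assumption that $B$ is a rational multiple of $A$, Lemma~\ref{lem:rationality} guarantees that $\phi$ is rational because $\phi_0$ is. Thus Proposition~\ref{prop:closing_lemma} applies to the pair $(\phi, U)$, where we now view $U$ as an open subset of $\Sigma$. It produces an arbitrarily $C^\infty$-small Hamiltonian $\widetilde{H} \in C^\infty(\bR/\bZ \times \Sigma; L)$, compactly supported in $\bR/\bZ \times U$, such that $\phi^{\widetilde{H}}$ has a periodic point in $U$. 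Note that the requirement $\widetilde{H} \in C^\infty(\bR/\bZ \times \Sigma; L)$ is free here: since $U \subset Z \setminus \partial Z$ is disjoint from $L$, any $\widetilde{H}$ supported in $\bR/\bZ \times U$ vanishes identically on $\bR/\bZ \times L$.

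Next, I would restrict the perturbation back to $Z$. The key observation is that $\psi^1_{\widetilde{H}}$ preserves $Z$: indeed, $\widetilde{H}_t$ vanishes on and outside $\partial U$, so the Hamiltonian vector field $X_{\widetilde{H}_t}$ vanishes there, which means no trajectory of the flow can cross $\partial U \subset Z \setminus \partial Z$. Since $\phi$ also preserves $Z$ (as $\phi$ extends $\phi_0$), the composition $\phi^{\widetilde{H}} = \phi \circ \psi^1_{\widetilde{H}}$ preserves $Z$. Let $H$ denote the restriction of $\widetilde{H}$ to $\bR/\bZ \times Z$; it is compactly supported in $\bR/\bZ \times U$ and inherits the $C^\infty$-smallness of $\widetilde{H}$. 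The restriction of $\phi^{\widetilde{H}}$ to $Z$ is exactly $\phi_0 \circ \psi^1_H = \phi_0^H$, and the periodic point in $U \subset Z$ produced above for $\phi^{\widetilde{H}}$ is automatically a periodic point of $\phi_0^H$ in $U$.

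This is a purely structural assembly of results already in hand, so no step presents a genuine obstacle; the only subtlety worth checking carefully is that the Hamiltonian isotopy $\psi^t_{\widetilde{H}}$ cannot escape $U$, which is guaranteed by the compact support of $\widetilde{H}$ inside the open set $U$ in the interior of $Z$.
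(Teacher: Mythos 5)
Your proof is correct and follows essentially the same route as the paper: extend $\phi_0$ to $\Sigma$ via Proposition~\ref{prop:extension}, invoke Lemma~\ref{lem:rationality} to transfer rationality, apply Proposition~\ref{prop:closing_lemma}, and restrict back to $Z$. The extra remarks you make (that the $C^\infty(\bR/\bZ \times \Sigma; L)$ condition is automatic since $U$ is disjoint from $L$, and that the flow stays in $U$ so the restriction to $Z$ makes sense) are correct and simply spell out details the paper leaves implicit.
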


\begin{proof}
Extend $\phi_0$ to some $\phi \in \Diff(\Sigma, \Omega; L)$ using Proposition~\ref{prop:extension}. By Lemma \ref{lem:rationality}, $\phi$ is rational. Proposition~\ref{prop:closing_lemma} shows that there is an arbitrarily $C^\infty$-small Hamiltonian $H \in C^\infty(\bR/\bZ \times Z$, compactly supported in $\bR/\bZ \times U$, such that the perturbed map $\phi^H = \phi \circ \psi^1_H$ has a periodic point in $U$. The restriction $\phi_0^H = \phi_0 \circ \psi^1_H$ therefore has a periodic point in $U$ and the proposition follows. 
\end{proof}

\begin{prop}\label{prop:boundary_closing}
Fix a map $\phi_0 \in \Diff(Z, \omega)$. For any open set $U \subset Z \setminus \partial Z$, there exists an arbitrarily $C^\infty$-close map $\psi_0 \in \Diff(Z, \omega)$ which has a periodic point in $U$. 
\end{prop}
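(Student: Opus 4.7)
The plan is to mimic the proof of Proposition~\ref{prop:boundary_closing_monotone}, but using Proposition~\ref{prop:closing_lemma2} in place of Proposition~\ref{prop:closing_lemma}, since the map $\phi_0$ is not assumed to be rational here. First, I would apply Proposition~\ref{prop:extension} to extend $\phi_0$ to a map $\phi \in \Diff(\Sigma, \Omega; L)$ defined on the closed surface $\Sigma$ obtained via the capping-off construction of \S\ref{subsec:capping_off}. By the very construction of $\Sigma$, the Lagrangian $L$ bounds the disjoint collection of capped-off disks, hence $L$ is inessential in $\Sigma$.

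Next, I would apply Proposition~\ref{prop:closing_lemma2} to the open set $U \subset \Sigma$ (viewing $U$ as an open subset of $\Sigma$ via the inclusion $Z \hookrightarrow \Sigma$): since $L$ is inessential, there exists $\phi' \in \Diff(\Sigma, \Omega; L)$ arbitrarily $C^\infty$-close to $\phi$ and possessing a periodic point $x \in U$.

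The final step is to restrict $\phi'$ to $Z$ and verify that this restriction does the job. Since $\phi(Z) = Z$ (the extension satisfies $\phi|_Z = \phi_0 \in \Diff(Z,\omega)$) and $\phi'$ preserves $L$ while being $C^\infty$-close to $\phi$, the map $\phi'$ also satisfies $\phi'(Z) = Z$; indeed, $\phi'$ cannot swap $Z$ with $\Sigma \setminus Z$ if the perturbation is small, and any component-permutation of $\Sigma \setminus L$ is ruled out by continuity. Thus $\psi_0 := \phi'|_Z$ lies in $\Diff(Z, \omega)$ and is $C^\infty$-close to $\phi_0 = \phi|_Z$ because the restriction map $\Diff(\Sigma, \Omega; L) \to \Diff(Z, \omega)$ is continuous in the $C^\infty$ topology. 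Finally, since $U \subset Z \setminus \partial Z$ and the $\phi'$-orbit of $x$ is entirely contained in $Z$ (it cannot cross the invariant set $L$), the point $x$ is a periodic point of $\psi_0$ lying in $U$.

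I do not expect any genuine obstacle here: once the extension Proposition~\ref{prop:extension} and the inessentiality of $L$ in $\Sigma$ are in hand, the argument is a direct transfer of Proposition~\ref{prop:closing_lemma2} from $\Sigma$ back down to $Z$. The only minor technical point worth being careful about is verifying that $\phi'$ sends $Z$ to $Z$ (rather than swapping it with $\Sigma \setminus Z$), which follows from the $C^\infty$-closeness to $\phi$; this ensures that the restriction to $Z$ is well-defined and that the periodic point persists after restriction.
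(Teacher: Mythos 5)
Your proof is correct and follows essentially the same three-step route as the paper: extend $\phi_0$ to $\phi \in \Diff(\Sigma, \Omega; L)$ via Proposition~\ref{prop:extension}, apply Proposition~\ref{prop:closing_lemma2} (valid since $L$ is inessential by construction), and restrict back to $Z$. The paper leaves implicit the verification that $\phi'$ maps $Z$ to $Z$, which you correctly identify and justify by $C^\infty$-closeness to $\phi$; this is the only point where you add detail beyond what the paper records.
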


\begin{proof}
Extend $\phi_0$ to some $\phi \in \Diff(\Sigma, \Omega; L)$ using Proposition~\ref{prop:extension}. Proposition~\ref{prop:closing_lemma2} provides a $C^\infty$-close map $\psi \in \Diff(\Sigma, \Omega; L)$ with a periodic point through $U$. The restriction $\psi_0 \in \Diff(Z, \omega)$ of $\psi$ to $Z$ is the desired $C^\infty$-small perturbation of $\phi_0$. 
\end{proof}

\subsection{Proof of generic equidistribution} \label{subsec:equidistribution_proof}

We prove Theorems~\ref{thm:boundary_equidistribution} and \ref{thm:boundary_monotone_equidistribution} using Proposition~\ref{prop:lag_nearly_equidistributed} from the previous section. As preparation, we introduce the following notation. Fix any $\phi \in \Diff(\Sigma, \Omega; L)$. 

For every orbit set $$\cO = \sum_{k=1}^N a_k \cdot S_k \in \cP_{\bR}(\phi),$$ we define 
$$\cO^{Z} := \sum_{k\text{ such that }S_k \subset Z} a_k \cdot S_k$$
to be the orbit set constructed from $\cO$ by taking only those orbits $S_k$ which lie in $Z \subset \Sigma$. Writing $\phi_0 \in \Diff(Z, \omega)$ for the restriction of $\phi$ to $Z$, we have $\cO^Z \in \cP_{\bR}(\phi_0)$, that is it is an orbit set for $\phi_0$. We begin with a proof of Theorem~\ref{thm:boundary_monotone_equidistribution}. 

\begin{proof}[Proof of Theorem~\ref{thm:boundary_monotone_equidistribution}]
Fix a rational $\phi_0 \in \Diff(Z, \omega)$ and write $[\phi_0] \subset \Diff(Z, \omega)$ for its Hamiltonian isotopy class. Fix any sequence of smooth functions $\{f_i\}_{i \geq 1}$ which are $C^0$-dense in $C^0(Z)$, and assume for the sake of convenience later in the proof that $f_1 \equiv 1$. Fix a sequence $\{\epsilon_N\}_{N \geq 1}$ of positive real numbers, each less than $1$, limiting to $0$ as $N \to \infty$. For any fixed $N \geq 1$, write $\cH_N$ for the space of maps $\psi_0 \in \Diff(Z, \omega)$ which are Hamiltonian isotopic to $\phi_0$ ($\psi_0 \in [\phi_0]$) and have a nondegenerate orbit set $\cO$ such that for each $i \in \{1, \ldots, N\}$, 
$$|\cO(f_i)/|\cO| - A^{-1}\int_Z f_i\,\omega| < \epsilon_N.$$

The proof will proceed in $5$ steps. The first step shows that, for any $N \geq 1$, $\cH_N$ is open in $[\phi_0]$. The next three steps, which constitute the main new part of the proof, show that $\cH_N$ is dense in $[\phi_0]$. The last step finishes off the proof of the theorem. 

\noindent\textbf{Step $1$:} Fix any $N \geq 1$. We observe that by definition, $\cH_N$ is open in $[\phi_0]$ for any $N \geq 1$. If $\psi_0 \in \cH_N$, with $\cO$ a nondegenerate orbit set such that 
$$|\cO(f_i)/|\cO| - A^{-1}\int_Z f_i\,\omega| < \epsilon_N$$
for every $i \in \{1, \ldots, N\}$, then the following is true. Since $\cO$ is nondegenerate, any sufficiently close $\psi_1 \in \cH_N$ will have a nondegenerate orbit set $\cO'$ which is very close to $\cO$. It follows immediately that it will satisfy the same inequality as above. This shows $\cH_N$ is open.  

\noindent\textbf{Step $2$:} The next $3$ steps show $\cH_N$ is dense in $[\phi_0]$. This step performs some required setup. Let $\{\delta_j\}_{j \geq 1}$ be a sequence of small positive constants, which we will specify later. Define two sequences $\{\chi_j\}_{j \geq 1}$  and $\{\eta_j\}_{j \geq 1}$ of cutoff functions as follows. For each $j \geq 1$, $\chi_j$ and $\eta_j$ are smooth functions from $\Sigma$ to $[0,1]$. Also write $g_j := \eta_j(1 - \chi_j)$ for every $j$. We assume the functions $\chi_j$ and $\eta_j$ satisfy the following properties:
\begin{itemize}
\item For each $j \geq 1$, $\chi_j$ is compactly supported in $Z \setminus L$, and $\eta_j \equiv 1$ on an open neighborhood of $Z \subset \Sigma$. 
\item As $j \to \infty$, the restrictions of the functions $\chi_j$ to $Z \setminus L$ converge uniformly on compact subsets of $Z \setminus L$ to the constant $1$. 
\item As $j \to \infty$, the restrictions of the functions $\eta_j$ to $\Sigma \setminus Z$ converge uniformly on compact subsets of $\Sigma \setminus Z$ to the constant $0$.
\item The integral $c_j = \int_\Sigma g_j\,\Omega$ is bounded above by $\delta_j\epsilon_j$ for each $j$. 
\end{itemize}

Let $\{f_i\}_{N \geq 1}$ be the sequence of functions on $Z$ fixed at the start of the proof. For each $i, j \geq 1$ define a smooth function $f_{i, j}$ on $\Sigma$ which is equal to $\chi_j f_i$ on $Z \setminus L$ and $0$ elsewhere. Also define $g_{i,j}$ to be the smooth function which is equal to $g_j f_i$ on $Z$ and $0$ elsewhere. Note that for any $x \in Z$ and any $j$, $f_i(x) = f_{i,j}(x) + g_{i,j}(x)$. Note also that the functions $f_{i,j}$ and the functions $g_j$ (but not necessarily the functions $g_{i,j}$) are locally constant on $L$. 

\noindent\textbf{Step $3$:} Fix any $\psi_0 \in [\phi_0]$, and use Proposition~\ref{prop:extension} to define an extension $\psi \in \Diff(\Sigma, \Omega; L)$. Apply Proposition~\ref{prop:lag_nearly_equidistributed} to the set of $N + 1$ functions
$$\{f_{i,N}\,|\,i=1, \ldots, N\} \cup \{g_N\}$$
with the constant $\epsilon = \delta_N\epsilon_N$. By Lemma \ref{lem:rationality}, $\psi$ is rational. This implies that there is a $C^\infty$-small Hamiltonian $H \in C^\infty([0, 1] \times \Sigma; L)$ and an orbit set $\cO \in \cP_{\bR}(\psi^H)$ such that
\begin{equation} \label{eq:equidistribution1} |\cO(f_{i,N})/|\cO| - B^{-1}\int_{\Sigma} f_{i,N}\,\Omega| < \delta_N\epsilon_N\end{equation}
for any $i \in \{1, \ldots, N\}$ and
\begin{equation} \label{eq:equidistribution2} |\cO(g_N)/|\cO| - B^{-1}c_N| < \delta_N\epsilon_N.\end{equation}

Note in particular that since $f_1 \equiv 1$, 
\begin{equation} \label{eq:equidistribution3}|\cO(\chi_N)/|\cO| - B^{-1}\int_{\Sigma} \chi_N\,\Omega| < \delta_N\epsilon_N.\end{equation}
for each $j \in \{1, \ldots, N\}$. 

\noindent\textbf{Step $4$:} Fix any $i \in \{1, \ldots, N\}$. We now estimate the averages of the function $f_i$ over $\cO^Z$, where $\cO$ is the orbit set from the previous step and $\cO^Z$ is the orbit set constructed by summing the simple orbits from $\cO$ which lie in $Z$. We split up
$$\frac{\cO^Z(f_i)}{|\cO^Z|} = \frac{\cO^Z(f_i)}{|\cO|} \cdot \frac{|\cO|}{|\cO^Z|}$$
and estimate the two terms on the right-hand side separately. To estimate the first term, we split up $f_i = f_{i,N} + g_{i,N}$ and estimate the resulting terms using the inequalities from the previous step:
\begin{equation}
\label{eq:equidistribution4}
\begin{split}
|\frac{\cO^Z(f_i)}{|\cO|} - B^{-1}\int_\Sigma f_i\,\Omega| &\leq |\frac{\cO(f_{i,N})}{|\cO|} - B^{-1}\int_\Sigma f_i\,\Omega| + \frac{\cO^Z( g_{i,N})}{|\cO|} \\
&\leq B^{-1}\int_\Sigma g_{i,N}\,\Omega + \delta_N\epsilon_N + \frac{\cO^Z(g_{i,N})}{|\cO|} \\
&\leq B^{-1}\int_\Sigma g_{i,N}\,\Omega + \delta_N\epsilon_N + \|f_i\|_{C^0}(B^{-1}c_N + \delta_N\epsilon_N) \\
&\leq 2(B^{-1}\|f_i\|_{C^0}c_N + \delta_N\epsilon_N) \\
&\leq 2(B^{-1}\|f_i\|_{C^0} + 1)\delta_N\epsilon_N.
\end{split}
\end{equation}

The first inequality is a consequence of the triangle inequality and the identity $\cO^Z(f_i) = \cO(f_{i,N}) + \cO^Z(g_{i,N})$. The second inequality uses \eqref{eq:equidistribution1} to bound the difference between the average of $f_{i,N}$ over $\cO$ and the average of $f_i$ over $\Sigma$. The third inequality uses the bound $\cO^Z(g_{i,N}) \leq \|f_i\|_{C^0}\cO(g_N)$ and \eqref{eq:equidistribution2}. 

Next, we estimate the quotient $|\cO^Z|/|\cO|$. This proceeds in a similar manner to \eqref{eq:equidistribution4}, except now we use the fact that $1 = \chi_N + g_N$ on $Z$ and the inequality \eqref{eq:equidistribution3}:
\begin{equation}
\label{eq:equidistribution5}
\begin{split}
|\frac{|\cO^Z|}{|\cO|} - \frac{A}{B}| &\leq |\frac{\cO(\chi_N)}{|\cO|} - \frac{A}{B}| + \frac{\cO^Z(g_N)}{|\cO|} \\
&\leq B^{-1}c_N + \delta_N\epsilon_N + \frac{\cO^Z(g_N)}{|\cO|} \\
&\leq 2(B^{-1}c_N + \delta_N\epsilon_N) \\
&\leq 2(B^{-1} + 1)\delta_N\epsilon_N
\end{split}
\end{equation}

The first inequality is a consequence of the triangle inequality and the identity $|\cO^Z| = \cO(\chi_N) + \cO^Z(g_N)$. The second inequality uses \eqref{eq:equidistribution3} and the bound
$$|1 - \int_\Sigma \chi_N\,\Omega| = \int_Z (1 - \chi_N)\,\omega \leq c_N.$$
The third inequality uses \eqref{eq:equidistribution2} along with the fact that $\cO^Z(g_N) \leq \cO(g_N)$. Now fix the scaling constant $\delta_N$ so that $\delta_N\epsilon_N \leq \frac{A}{100B}$. Then \eqref{eq:equidistribution5} implies 
\begin{equation}
\label{eq:equidistribution6}
\begin{split}
|\frac{|\cO|}{|\cO^Z|} - \frac{B}{A}| &= \big(\frac{|\cO^Z|}{|\cO|} \cdot \frac{A}{B}\big)^{-1}|\frac{|\cO^Z|}{|\cO|} - \frac{A}{B}| \\
&\leq C(A, B)\delta_N\epsilon_N
\end{split}
\end{equation}
for some constant $C(A, B) \geq 1$ depending only on $A$ and $B$.

Denote by $E_1$ and $E_2$ the terms on the left-hand sides of \eqref{eq:equidistribution4} and \eqref{eq:equidistribution6}, respectively. Combine \eqref{eq:equidistribution4} and \eqref{eq:equidistribution6} to deduce the following bound:
\begin{equation*}
\begin{split}
|\frac{\cO^Z(f_i)}{|\cO^Z|} - A^{-1}\int_Z f_i\,\omega| &\leq E_1 \cdot E_2 + E_2 \cdot B^{-1}\int_Z f_i\,\omega + E_1 \cdot BA^{-1} \\
&\leq C(A, B, i) \delta_N \epsilon_N.
\end{split}
\end{equation*}

The first inequality is a consequence of the triangle inequality. The second inequality plugs in \eqref{eq:equidistribution4} and \eqref{eq:equidistribution6}. Here $C(A, B, i) \geq 1$ is a constant depending only on $A$, $B$, and the $C^0$ norm of $f_i$. Fix the scaling constant $\delta_N$ to be smaller than $C(A, B, i)^{-1}$ for every $i \in \{1, \ldots, N\}$ and conclude that   
$$|\frac{\cO^Z(f_i)}{|\cO^Z|} - \int_Z f_i\,\omega| < \epsilon_N$$
for all $i \in \{1, \ldots, N\}$. Recall that $\cO$ is an orbit set of a $C^\infty$-small Hamiltonian perturbation $\psi^H$ of the extension $\psi$ of the original $\psi_0 \in \Diff(Z)$, and so $\cO^Z$ is an orbit set of its restriction $\psi^H_0$. After applying another small Hamiltonian perturbation via Lemma~\ref{lem:make_nondegenerate} to make $\cO^Z$ nondegenerate, we conclude that $\psi_0^H \in \cH_N$ and so $\cH_N$ is dense in $[\phi_0]$. 

\noindent\textbf{Step $5$:} Write 
$$\cH_{\text{good}} := \bigcap_{N \geq 1} \cH_N$$ 
for the intersection of the sets $\cH_N$. Since each set is open and dense, $\cH_{\text{good}}$ is residual inside $[\phi_0]$. It remains to prove that any $\psi_0 \in \cH_{\text{good}}$ has an equidistributed sequence of orbit sets. Fix $\psi_0 \in \cH_{\text{good}}$. Then by definition, $\psi_0$ has a sequence of orbit sets $\{\cO_N\}_{N \geq 1}$ such that for each $N$ and each $i \in \{1, \ldots, N\}$, 
$$|\frac{\cO_N(f_i)}{|\cO_N|} - \int_Z f_i\,\omega| < \epsilon_N.$$

Fix any continuous function $f$ on $Z$. Write $c_N = \sup_{i \in \{1, \ldots, N\}} \|f - f_i\|_{C^0}$ for every $N \geq 1$. Note that $c_N \to 0$ as $N \to \infty$ since the sequence $\{f_i\}_{i \geq 1}$ is dense in $C^0(Z)$. Then
\begin{align*}
\limsup_{N \to \infty} |\frac{\cO_N(f)}{|\cO_N|} - \int_Z f\,\omega| &\leq \limsup_{N \to \infty}\sup_{i \in \{1, \ldots, N\}}(|\frac{\cO_N(f_i)}{|\cO_N|} - \int_Z f_i\,\omega| + |\frac{\cO_N(f - f_i)}{|\cO_N|} - \int_Z (f - f_i)\,\omega|) \\
&\leq \limsup_{N \to \infty} \epsilon_N + 2c_N=0.
\end{align*}

This implies that the sequence $\{\cO_N\}_{N \geq 1}$ equidistributes, which proves the theorem. 
\end{proof}

We conclude by proving Theorem~\ref{thm:boundary_equidistribution}. The proof is a minor modification of the proof of Theorem~\ref{thm:boundary_monotone_equidistribution}. 

\begin{proof}[Proof of Theorem~\ref{thm:boundary_equidistribution}]
Fix any sequence of smooth functions $\{f_i\}_{i \geq 1}$ which are $C^0$-dense in $C^0(Z)$ with $f_1 \equiv 1$. Fix a sequence $\{\epsilon_N\}_{N \geq 1}$ of positive real numbers, each less than $1$, limiting to $0$ as $N \to \infty$. For any fixed $N \geq 1$, write $\cD_N$ for the space of maps $\psi_0 \in \Diff(Z, \omega)$ which have a nondegenerate orbit set $\cO$ such that for each $i \in \{1, \ldots, N\}$, 
$$|\cO(f_i)/|\cO| - \int_Z f_i\,\omega| < \epsilon_N.$$

The same argument as in Step $1$ of the proof of Theorem~\ref{thm:boundary_monotone_equidistribution} shows that $\cD_N$ is open for every $N \geq 1$. By Proposition~\ref{prop:rational_maps_dense}, rational maps are dense in $\Diff(Z, \omega)$. Use the argument in Steps $2$--$4$ of the proof of Theorem~\ref{thm:boundary_monotone_equidistribution} to show that $\cD_N$ is dense. Then the argument in Step $5$ of the proof of Theorem~\ref{thm:boundary_monotone_equidistribution}, repeated verbatim, shows that any map in the residual set
$$\cD_{\text{good}} := \bigcap_{N \geq 1} \cD_N \subset \Diff(Z, \omega)$$
has an equidistributed sequence of orbit sets, which finishes off the proof of the theorem. 
\end{proof}

\bibliographystyle{alpha}
\bibliography{density}

\end{document}